\DeclareFontFamily{U}{mathx}{\hyphenchar\font45}%
  \DeclareFontShape{U}{mathx}{m}{n}{<->mathx10}{}%
  \DeclareSymbolFont{mathx}{U}{mathx}{m}{n}%
  \DeclareMathAccent{\widebar}{0}{mathx}{"73}%
 \newcommand{\widebar}[1]{\overline{#1}}
\newcommand{\wb}[1]{\widebar{#1}}
\newcommand{\wt}[1]{\widetilde{#1}}
\newcommand{\wh}[1]{\widehat{#1}}
\newcommand{\rd}{\mathrm{d}}
\newcommand{\ud}{\,\mathrm{d}}
\DeclareMathOperator{\divop}{div}
\DeclareMathOperator{\tr}{tr}
\newcommand{\TT}{\mathrm{T}}
\newcommand{\be}{\begin{equation}}
\newcommand{\ee}{\end{equation}}
\newcommand{\bi}{\begin{itemize}}
\newcommand{\ei}{\end{itemize}}
\newcommand{\br}{\begin{eqnarray}}
\newcommand{\er}{\end{eqnarray}}
\newcommand{\expE}{\mathbb{E}}
\newcommand{\abs}[1]{\lvert #1 \rvert}
\newcommand{\norm}[1]{\lVert #1 \rVert}
\newcommand{\avg}[1]{\langle #1 \rangle}
\newcommand{\commentout}[1]{}
\newcommand{\RR}{\mathbb{R}}
\newtheorem{theo}{Theorem}[section]
\newtheorem{prop}[theo]{Proposition}
\newtheorem{lem}[theo]{Lemma}
\newtheorem{cor}[theo]{Corollary}
\newtheorem{rmk}[theo]{Remark}
\newcommand{\Rm}{{\mathbb R}}
\newcommand{\Em}{{\mathbb E}}
\newcommand{\Pm}{{\mathbb P}}
\def\XXint#1#2#3{{\setbox0=\hbox{$#1{#2#3}{\int}$}
\vcenter{\hbox{$#2#3$}}\kern-.5\wd0}}
\numberwithin{equation}{section}
\begin{document}

\title{Reactive trajectories and the transition path process}

\author{Jianfeng Lu} 

\address{Department of Mathematics and Department of Physics \\
  Duke University, Box 90320 \\
  Durham, NC 27708, USA}

\email{jianfeng@math.duke.edu} 

\author{James Nolen}
\address{Mathematics Department \\
  Duke University, Box 90320 \\
  Durham, NC 27708, USA}
\email{nolen@math.duke.edu}

\date{\today}

\thanks{We are grateful to Weinan E, Jonathan Mattingly, and Eric
  Vanden-Eijnden for helpful discussions.}

\maketitle

\begin{abstract}
  We study the trajectories of a solution $X_t$ to an It\^o stochastic
  differential equation in $\Rm^d$, as the process passes between two
  disjoint open sets, $A$ and $B$. These segments of the trajectory
  are called transition paths or reactive trajectories, and they are
  of interest in the study of chemical reactions and thermally
  activated processes. In that context, the sets $A$ and $B$ represent
  reactant and product states. Our main results describe the
  probability law of these transition paths in terms of a transition
  path process $Y_t$, which is a strong solution to an auxiliary SDE
  having a singular drift term.  We also show that statistics of the
  transition path process may be recovered by empirical sampling of
  the original process $X_t$. As an application of these ideas, we
  prove various representation formulas for statistics of the
  transition paths. We also identify the density and current of
  transition paths. Our results fit into the framework of the
  transition path theory by E and Vanden-Eijnden.
\end{abstract}

\section{Introduction}

In this article we study solutions $X_t \in \Rm^d$ of the It\^o
stochastic differential equation
\begin{equation}\label{XSDE}
  \rd X_t = b(X_t) \ud t + \sqrt{2}\,\sigma(X_t) \ud W_t,
\end{equation}
where $(W_t, \mathcal{F}^W_t)$ is a standard Brownian motion in
$\Rm^d$, defined on a probability space $(\Omega, \mathcal{F},
\Pm)$. This diffusion process in $\Rm^d$ has generator
\begin{equation*}
  L u = \tr(a \nabla^2 u) + b \cdot \nabla u,
\end{equation*}
where $a:= \sigma \sigma^{\TT}$ is a symmetric matrix. We suppose that $a(x)$ is
smooth, uniformly positive definite, and bounded:
\[
\lambda |\xi|^2 \leq \avg{\xi, a(x) \xi} \leq \Lambda |\xi|^2, \quad
\forall\,\xi \in \Rm^d, \quad \forall \;x \in \Rm^d
\]
holds for some $\Lambda > \lambda > 0$.  We suppose the vector field
$b(x)$ is smooth and satisfies conditions that guarantee the
ergodicity of the Markov process $X_t$ and the existence of a unique
invariant probability distribution $\rho(x) > 0$ satisfying the adjoint equation
\begin{equation}\label{rhodef}
  L^* \rho = ( a_{ij}(x) \rho(x))_{x_i x_j} - \nabla \cdot (b(x)
  \rho(x)) = 0.
\end{equation}
For example, this will be the case if
\[
\limsup_{ m \to +\infty} \;\sup_{|x| = m} x \cdot b(x)  < -r
\] 
with $r > 1 + (d/2)$  \cite{Ver97}.

Suppose that $A, B \subset \Rm^d$ are two bounded open sets with
smooth boundary and such that $\wb{A}$ and $\wb{B}$ are
disjoint. Because the process is ergodic, $X_t$ will visit both $A$
and $B$ infinitely often. Inspired by the transition path theory
  developed by E and Vanden-Eijnden \cites{EVa:06, MeScVa:06} (see
  also the review article \cite{EVa:10}), our main interest is in
those segments of the trajectory $t \mapsto X_t$ which pass from $A$
to $B$. These transition paths and are defined precisely as
follows. First, for $k \geq 0$, define the hitting times
$\tau_{A,k}^+$ and $\tau_{B,k}^+$ inductively by
\begin{align*}
  & \tau_{A,0}^+ = \inf \{ t \geq 0 \mid X_t \in \wb A\},  \\
  & \tau_{B,0}^+ = \inf \{ t > \tau_{A,0}^+ \mid X_t \in \wb B\}, \\
  \intertext{and for $k \geq 0$,}
  & \tau_{A,k+1}^+ = \inf \{ t > \tau_{B,k}^+ \mid X_t \in \wb A\}, \\
  & \tau_{B,k+1}^+ = \inf \{ t > \tau_{A,k+1}^+ \mid X_t \in \wb B\}. \\
  \intertext{We will call these the \textbf{entrance times}. Then
    define the \textbf{exit times}}
  & \tau_{A,k}^- = \sup \{ t < \tau_{B,k}^+ \mid X_t \in \wb A\}, \\
  & \tau_{B,k}^- = \sup \{ t < \tau_{A,k+1}^+ \mid X_t \in \wb B\}.
\end{align*}
These times are all finite with probability one, and $\tau_{A,k}^+
\leq \tau_{A,k}^- < \tau_{B,k}^+ \leq \tau_{B,k}^- < \tau_{A,k+1}^+$
for all $k \geq 0$ (see Figure \ref{fig:transpath}). If $t \in
[\tau_{A,k}^-, \tau_{B,k}^+]$ for some $k$, we say that the path $X_t$
is $A \to B$ \textbf{reactive}. Let $\Theta = (\wb{A \cup B})^C$, and
hence $\partial\, \Theta = \partial A \cup \partial B$. For $k \in
\mathbb{N}$, the continuous process $Y^k:[0,\infty) \to \wb{\Theta}$
defined by
\begin{equation}\label{nthreactive}
  Y_t^k = X_{(t + \tau_{A,k}^-) \wedge \tau_{B,k}^+}
\end{equation} 
is the $k^{\text{th}}$ $A \to B$ \textbf{reactive trajectory} or
\textbf{transition path}. Observe that $Y_0^k = X_{\tau_{A,k}^-}
\in \partial A$, and that $Y^k_t = X_{\tau_{B,k}^+} \in \partial B$
for all $t \geq \tau_{B,k}^+ - \tau_{A,k}^-$, and that $Y^k_t \in
\Theta$ for all $t \in (0,\tau_{B,k}^+ - \tau_{A,k}^-)$.

\begin{figure}[htp]
  \label{fig:transpath}
  \includegraphics[height = 2.5in]{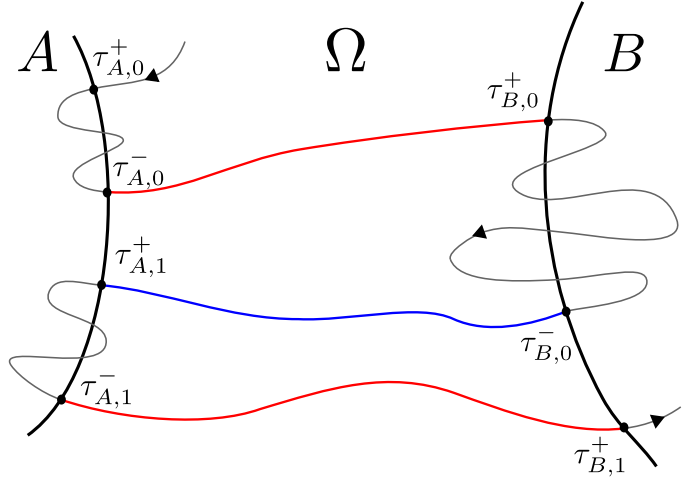}
  \caption{Illustration of a trajectory with entrance and exit
    times. The transition path from $A$ to $B$ is marked in red.}
\end{figure}
 
Our main results describe the probability law of these transition
paths in terms of a \textbf{transition path process}, which is a strong
solution to an auxiliary stochastic differential equation. In
particular, empirical samples of the reactive portions of $X_t$ may be
regarded as sampling from the transition path process. The motivation
comes from the study of chemical reactions and thermally activated
processes where understanding these reactive trajectories are crucial
\cites{DellagoBolhuisGeissler:02, BolhuisChandlerDellagoGeissler:02}.
In these applications, the domains $A$ and $B$ are usually chosen as
regions in configurational space corresponding to reactant and product
states. Mathematically, our results fit into the framework of the
  transition path theory \cites{EVa:10, EVa:06, MeScVa:06}.

Having identified the transition path process, we can compute
statistics of the transition paths by sampling directly from the
transition path SDE, rather than using acceptance/rejection methods or
very long-time integration on the original SDE. Of course, this
assumes knowledge of the committor function, which is non-trivial. In any
case, our results might be used to analyze methods of sampling
reactive trajectories.

We will now describe our main results and their relation to other
works. Proofs are deferred to later sections.

\subsection{The transition path process}

Our definition of the transition path process is motivated by the Doob
$h$-transform as follows. Let $\tau_A$ and $\tau_B$ denote the first
hitting time of $X_t$ to the sets $\wb{A}$ and $\wb{B}$, respectively:
\begin{equation}
  \begin{aligned}
    \tau_A & = \inf \left\{ t \geq 0 \mid X_t \in \wb{A} \right\},  \\
    \tau_B & = \inf \left\{ t \geq 0 \mid X_t \in \wb{B} \right\}.
  \end{aligned}
\end{equation}
Let $q(x)\geq 0$ be the \textbf{forward committor function}: 
\begin{equation}\label{committorDef}
q(x) = \Pm(\tau_A > \tau_B \mid X_0 = x),  
\end{equation}
which satisfies $L q(x) = 0$ for $x \in \Theta = (\wb{A \cup B})^C$ and 
\begin{equation}\label{committorBC}
  q(x)  = 
  \begin{cases}
    0, & x \in \wb A, \\ 
    1, & x \in \wb B. 
  \end{cases} 
\end{equation}
By the maximum principle, $q(x) > 0$ for all $x \in \Theta$. By the
Hopf lemma we also have
\begin{equation}\label{hopf}
  \sup_{x \in \partial A} \wh{n}(x) \cdot \nabla q(x) < 0, \qquad 
  \inf_{x \in \partial B} \wh{n}(x) \cdot \nabla q(x) > 0, 
\end{equation}
where $\wh{n}(x)$ will denote the unit normal exterior to $\Theta$
(pointing into $A$ and $B$). For $x \in \Theta$, consider the stopped
process $X_{t \wedge \tau_A \wedge \tau_B}$ with $X_0 = x$, and let
$\mathcal{P}_x$ denote the corresponding measure on $\mathcal{X} =
C([0,\infty))$:
\[
\mathcal{P}_x(U) = \Pm( X \in U \;|\; X_0 = x), \quad \forall\; U \in
\mathcal{B}
\]
where $\mathcal{B}$ is the Borel $\sigma$-algebra on $\mathcal{X}$. If
$\Lambda_{AB}$ denotes the event that $\tau_A > \tau_B$, the measure
$\mathcal{Q}_x^q$ on $(\mathcal{X},\mathcal{B})$ defined by
\[
\dfrac{\rd \mathcal{Q}_x^q}{\rd \mathcal{P}_x} =
\dfrac{\mathbb{I}_{\Lambda_{AB}}}{\mathcal{P}_x(\Lambda_{AB})} =
\dfrac{\mathbb{I}_{\Lambda_{AB}}}{q(x)}
\]
is absolutely continuous with respect to $\mathcal{P}_x$, if $x \in
\Theta$.  By the Doob $h$-transform (see e.g.~\cite{Day:92},
\cite{Pinsky:95}*{Theorem 7.2.2}), we know that $\mathcal{Q}_x^q$
defines a diffusion process $Y_t$ on $C([0,\infty))$ with generator:
\begin{equation}\label{Lconditioned} 
  L^q f = \frac{1}{q} L(qf) = 
  \tr(a \nabla^2 f) + (b \cdot \nabla f) + \frac{2 a \nabla q}{q}
  \cdot \nabla f = Lf + \frac{2a \nabla q}{q} \cdot 
  \nabla f.
\end{equation}
So, the effect of conditioning on the event $\tau_B < \tau_A$ is to
introduce an additional drift term. 

This observation suggests that the $A\to B$ reactive trajectories
should have the same law as a solution to the SDE
\begin{equation}\label{TPode}
  \rd Y_t = \Bigl(b(Y_t) +  \dfrac{2a(Y_t)\nabla q(Y_t) }{q(Y_t)}\Bigr) 
  \ud t + \sqrt{2}\, \sigma(Y_t) \ud \wh{W}_t,
\end{equation}
originating at a point $Y_0 = y_0 \in \partial A$ and terminating at a
point in $\partial B$. While the SDE \eqref{TPode} admits strong
solutions for $y_0 \in \Theta$ since $q(x) > 0$ in $\Theta$, the drift
term becomes singular at the boundary of $A$, where $q$ vanishes. Our
first result is the following theorem which shows that there is still a
unique strong solution to this SDE even for initial condition lying in
$\partial A$. For convenience, let us define the vector field 
\begin{equation}\label{Kvecdef}
  K(y) = \Bigl(b(y) + \frac{2 a(y)\nabla q(y)}{q(y)}\Bigr).
\end{equation}

\begin{theo}\label{theo:uniqueYAB} 
  Let $(\wh{W},\mathcal{F}^{\wh{W}}_t)$ be a standard Brownian motion
  in $\Rm^d$, defined on a probability space $(\wh{\Omega},
  \wh{\mathcal{F}},\mathbb{Q})$. Let $\xi:\wh{\Omega} \to \wb{\Theta}$
  be a random variable defined on the same probability space and
  independent of $\wh{W}$. There is a unique, continuous process
  $Y_t:[0,\infty) \to \wb{\Theta}$ which is adapted to the augmented
  filtration $\wh{\mathcal{F}}_t$ and satisfying the following,
  $\mathbb{Q}$-almost surely:
  \begin{align}
    Y_t = \xi + \int_0^{t \wedge \tau_B} K(Y_s) \ud s + \int_0^{t
      \wedge \tau_B} \sqrt{2}\, \sigma(Y_s) \ud \wh{W}_s, \quad t \geq
    0\label{TPode1}
  \end{align}
  where
  \[
  \tau_B = \inf \{ t > 0 \mid Y_t \in \wb B\}.
  \]
Moreover, $Y_t \not \in \wb{A}$ for all $t > 0$.
\end{theo}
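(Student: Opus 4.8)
The plan is to treat the singularity of the drift $K$ at $\partial A$ as the only genuine difficulty. On every compact subset of $\Theta$ the coefficients $K$ and $\sqrt{2}\,\sigma$ are smooth and bounded, so classical strong existence and pathwise uniqueness apply locally; the essential new feature is that the singular drift $2a\nabla q/q$ is strongly \emph{repulsive} at $\partial A$. Indeed, since $q$ vanishes on $\wb A$ and is positive in $\Theta$, the Hopf lemma \eqref{hopf} shows that $\nabla q$ points into $\Theta$ along $\partial A$, so $2a\nabla q/q$ blows up while pointing away from $A$. This is what prevents $Y_t$ from reaching $\wb A$ at positive times and what makes the boundary initial condition $\xi\in\partial A$ well posed. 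I would organize the proof as (i) existence, uniqueness, and non-attainability for $\xi\in\Theta$, and (ii) the boundary start $\xi\in\partial A$.

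For (i), set $\Theta_m=\{x\in\Theta: q(x)>1/m\}$ and $\tau_m=\inf\{t: Y_t\notin\Theta_m\}$. On a neighborhood of $\wb{\Theta_m}$ the field $K$ in \eqref{Kvecdef} and $\sigma$ are Lipschitz, so \eqref{TPode1} has a unique strong solution, adapted to $\wh{\mathcal{F}}_t$, up to $\tau_m\wedge\tau_B$; these are consistent in $m$, and writing $\tau_{\partial A}=\lim_m\tau_m$ we obtain a unique strong solution valued in $\wb\Theta$ up to $\tau_{\partial A}\wedge\tau_B$. To show $Y_t$ never reaches $\partial A$ before $\tau_B$, I would use the Lyapunov function $V=-\log q\ge 0$ on $\Theta$. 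A direct computation using $Lq=0$ in $\Theta$ gives
\begin{equation*}
  L^q V = -\frac{\avg{\nabla q,\,a\nabla q}}{q^2}\le 0,
\end{equation*}
with $L^q$ as in \eqref{Lconditioned}, so by It\^o's formula $V(Y_{t\wedge\tau_m\wedge\tau_B})$ is a nonnegative bounded supermartingale. Optional stopping gives $(\log m)\,\mathbb{Q}(\tau_m\le t\wedge\tau_B)\le V(\xi)$, and since $\{\tau_{\partial A}\le t,\ \tau_{\partial A}<\tau_B\}\subseteq\{\tau_m\le t\wedge\tau_B\}$ for every $m$, letting $m\to\infty$ forces $\mathbb{Q}(\tau_{\partial A}<\tau_B,\ \tau_{\partial A}<\infty)=0$. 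Hence the solution is defined for all $t\ge 0$ (stopped at $\tau_B$), takes values in $\wb\Theta$, and $Y_t\notin\wb A$ for all $t>0$. This settles the theorem when $\xi\in\Theta$.

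The main obstacle is (ii), the boundary initial condition $\xi\in\partial A$, where the drift is singular at the starting point itself. Here I would analyze the motion normal to $\partial A$: writing $r(x)$ for the distance to $\partial A$, the Hopf lemma \eqref{hopf} gives $q\asymp r$ near $\partial A$, so $2a\nabla q/q$ acts along the inward normal like a term of size $\avg{\nu,a\nu}/r$. Applying It\^o to $r(Y_t)$ and rescaling shows that, to leading order, the radial motion is comparable to a three-dimensional Bessel process: both the $1/r$ drift coefficient and the quadratic variation scale like $\avg{\nu,a\nu}$, and their ratio pins the effective dimension above $2$. Such a process leaves $0$ instantaneously and never returns, which is precisely the behavior we need at $\xi\in\partial A$. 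To make this rigorous I would construct the solution as the limit of the solutions $Y^n$ from part (i) driven by the \emph{same} Brownian motion $\wh W$ and started at $\xi_n\to\xi$ with $\xi_n\in\Theta$, so that each $Y^n$ is adapted to $\wh{\mathcal{F}}_t$ and the limit is automatically adapted, yielding a strong solution with $Y_0=\xi$.

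The hard part will be proving that $\{Y^n\}$ converges, and identifying the limit as the unique strong solution issued from $\partial A$. The two ingredients are a uniform-in-$n$ estimate, via the supermartingale bound for $-\log q$ and the Bessel comparison, that no mass concentrates on $\partial A$ at positive times (so the singular drift is integrable along the paths and the limit actually solves \eqref{TPode1}), and the passage from pathwise uniqueness on $\Theta$ to uniqueness among solutions starting on $\partial A$. For the latter I would show that any solution with $Y_0=\xi\in\partial A$ enters $\Theta$ immediately and satisfies $Y_t\in\Theta$ for all small $t>0$ — again a consequence of the Bessel comparison — so that two such solutions coincide for $t>0$ by the uniqueness of part (i) and hence agree at $t=0$ by continuity. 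The Bessel structure is thus doing double duty: it guarantees both the instantaneous entrance into $\Theta$ and the uniqueness of the entrance law, and controlling the limit $n\to\infty$ uniformly near the singular boundary is the step requiring the most care.
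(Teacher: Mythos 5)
Your part (i) is correct and essentially the paper's own argument in different clothing: where you use the supermartingale $-\log q(Y_t)$ (a valid computation, since $Lq=0$ gives $L^q(-\log q)=-\avg{\nabla q, a\nabla q}/q^2\le 0$), the paper uses the local martingale $1/q(Y_t)$ with optional stopping; both yield non-attainability of $\wb A$ from interior starting points. The genuine gap is in part (ii), and it has two faces. First, your uniqueness argument is circular: pathwise uniqueness for solutions started at a common \emph{interior} point does not transfer to solutions started at a common \emph{boundary} point merely because both enter $\Theta$ instantly. Two solutions $Y^1,Y^2$ with $Y^1_0=Y^2_0=\xi\in\partial A$ and the same Brownian motion may, a priori, occupy different interior points at every $t>0$, so interior uniqueness never gets to act; there is no positive time at which the two paths are known to coincide. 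This is exactly why uniqueness for the Bessel-type equation \eqref{Z1d} from the origin requires a dedicated argument (local time, Yamada--Watanabe, or monotonicity), even though its coefficients are Lipschitz away from $0$ and the solution leaves $0$ immediately. Instantaneous entrance plus local Lipschitzness off the singular set is simply not a uniqueness mechanism.

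Second, and relatedly, you flag the convergence of the approximating solutions $Y^n$ as ``the hard part'' but supply no estimate that could prove it: a bound showing no mass concentrates on $\partial A$ gives integrability of the singular drift along the limit (the paper handles this with Fatou and a zero-measure argument for $\{s: q_s=0\}$), but it does not make the sequence $Y^n$ Cauchy. What is missing is a quantitative stability estimate between solutions started at two nearby interior points, valid uniformly up to the singular boundary. The paper's proof is organized entirely around producing such an estimate: it constructs coordinates $(h^{(1)},\dots,h^{(d-1)},q)$ satisfying the $a$-orthogonality condition \eqref{hqOrthog}, which eliminates the singular drift from the tangential components, and then exploits that the remaining singular drift $|g(q,h)|^2/q$ is \emph{monotone decreasing} in $q$, yielding the one-sided Lipschitz inequality \eqref{qdifObs}. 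This feeds a Gronwall and Burkholder--Davis--Gundy argument (Lemma~\ref{lem:qhx1x2}, Corollary~\ref{cor:chebyY12}) giving $\mathbb{Q}\bigl(\max_{t\le T\wedge\tau}|Y^{x_1}_t-Y^{x_2}_t|>\alpha\bigr)\le C\alpha^{-2}|x_1-x_2|^{1/2}$. That single estimate then does all the work you need: Borel--Cantelli gives convergence of the $Y^{x_n}$, and applying it with $x_1=x_2=y_0\in\partial A$ gives pathwise uniqueness from the boundary. Your one-dimensional ``Bessel comparison'' heuristic cannot substitute for it, because the radial drift depends on the tangential variables as well ($|g(q,h)|^2/q$ is a function of $h$ too), and comparison theorems give ordering, not uniqueness; the orthogonal coordinates and the monotonicity estimate are precisely what tame that coupling.
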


The augmented filtration is defined in the usual way,
$\wh{\mathcal{F}}_t$ being the $\sigma$-algebra generated by
$\mathcal{F}_t^{\wh{W}}$, $Y_0$, and the appropriate collection of
null sets so that $\wh{\mathcal{F}}_t$ is both left- and right-
continuous.  We will use $\wh{\expE}$ to denote expectation with
respect to the probability measure $\mathbb{Q}$.

Observe that if $d = 1$, $\sigma = 1/\sqrt{2}$ is constant, and $b
\equiv 0$, then $q(x)$ is a linear function, and (\ref{TPode})
corresponds to a Bessel process of dimension 3. For example, if $A =
(-\infty,0)$, $B = (1,\infty)$, we have
\[
\rd Y_t = \frac{1}{Y_t} \ud t + \ud\wh{W}_t,
\]
and the function $Z_t = (Y_t)^2$ satisfies the degenerate diffusion
equation 
\begin{equation}\label{Z1d}
  \rd Z_t = 3 \ud t + 2 \sqrt{Z_t} \ud \wh{W}_t. 
\end{equation}
In this simple case, existence and uniqueness of a strong solution
starting at $Y_0 = 0$ can be shown using arguments involving Brownian
local time (see \cites{ReYo:04, KaSh:91}). However, those arguments
are not applicable to the more general setting we consider here. The
work most closely related to Theorem~\ref{theo:uniqueYAB} in a higher
dimensional setting may be that of DeBlaissie \cite{Deb:04} who proved
pathwise uniqueness for certain SDEs having diffusion coefficients
that degenerate like $\sqrt{d(Z_t)}$ where $d(z)$ is the distance to
the domain boundary (as in \eqref{Z1d}). In an earlier work, Athreya,
Barlow, Bass, and Perkins \cite{ABBP:2002} proved uniqueness for the
martingale problem associated with a similarly degenerate diffusion in
a positive orthant in $\Rm^d$.  Nevertheless, those analyses do not
apply to the case \eqref{TPode} considered here.

Our next result is the following theorem which shows that the law of
the reactive trajectories is that of the process $Y_t$ with
appropriate initial condition.  For this reason, we will call the
process $Y_t$ the \textbf{transition path process}.

\begin{theo}\label{theo:XtauAmin} 
  Let $X_t$ satisfy the SDE (\ref{XSDE}).  Let $Y^k$ denote the
  $k^{\text{th}}$ $A \to B$ reactive trajectory defined by
  \eqref{nthreactive}. Let $Y$ be defined as in Theorem
  \ref{theo:uniqueYAB}. Then for any bounded and continuous functional
  $F:C([0,\infty)) \to \Rm$, we have
  \[
  \expE[F(Y^k)] = \wh{\expE}\left[F(Y) \mid Y_0 \sim X_{\tau_{A,k}^-}
  \right].
  \]
\end{theo}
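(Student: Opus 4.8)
The plan is to condition on the entrance point $y_0 = X_{\tau_{A,k}^-} \in \partial A$ and to show that, given $y_0$, the reactive trajectory $Y^k$ has the law of the transition path process $Y$ of Theorem~\ref{theo:uniqueYAB} started at $y_0$; integrating against the law of $X_{\tau_{A,k}^-}$ then yields the stated identity. The heart of the matter is already contained in \eqref{Lconditioned}: for an \emph{interior} starting point $x \in \Theta$, the measure $\mathcal{Q}^q_x$ is precisely $\mathcal{P}_x$ conditioned on the event $\Lambda_{AB} = \{\tau_B < \tau_A\}$, and by the Doob $h$-transform it is the law of the stopped process $(Y_{t \wedge \tau_B})$ with $Y_0 = x$. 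So the first step is to record cleanly that, for $x \in \Theta$ and bounded continuous $F$, $\expE[F(X_{\cdot \wedge \tau_B}) \mid X_0 = x,\, \Lambda_{AB}] = \wh{\expE}[F(Y) \mid Y_0 = x]$, where one uses that on $\Lambda_{AB}$ the path reaches $\partial B$ without meeting $\wb A$ and is frozen thereafter, matching the behavior of $Y$ under \eqref{TPode1}.

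The difficulty is twofold: the reactive trajectory starts on $\partial A$, where $q$ vanishes and the conditioning $\Lambda_{AB}$ is singular, and the last exit time $\tau_{A,k}^-$ is not a stopping time, so the strong Markov property cannot be invoked at it directly. I would circumvent both by approximating $\tau_{A,k}^-$ from within $\Theta$ using level sets of the committor. For small $\epsilon \in (0,1)$, set $\Gamma_\epsilon = \{q = \epsilon\}$, which by \eqref{hopf} is a smooth hypersurface in $\Theta$ collapsing onto $\partial A$ as $\epsilon \to 0$. The successive hitting times of $\Gamma_\epsilon$ after returns to $\wb A$ are genuine stopping times; applying the strong Markov property at each, summing the contributions, and using that the event of no earlier successful crossing is measurable at each crossing time, the interior identity of the first step shows that, conditioned on the crossing point $x_\epsilon = X_{s_\epsilon} \in \Gamma_\epsilon$ of the successful excursion, the shifted path $\wt Y^\epsilon_t := X_{(t + s_\epsilon) \wedge \tau_B}$ has the law of $Y$ started at $x_\epsilon$, where $s_\epsilon = \inf\{t > \tau_{A,k}^- \mid q(X_t) = \epsilon\}$.

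It remains to let $\epsilon \to 0$. By continuity of paths, $s_\epsilon \downarrow \tau_{A,k}^-$ and $x_\epsilon \to X_{\tau_{A,k}^-} = y_0 \in \partial A$ almost surely, so the initial segment of the successful excursion collapses, $\wt Y^\epsilon \to Y^k$ locally uniformly, and $\expE[F(\wt Y^\epsilon)] \to \expE[F(Y^k)]$ since $F$ is bounded and continuous. On the other side I must show the $h$-transform started at $x_\epsilon \in \Theta$ converges in law to $Y$ started at the boundary point $y_0$; this is exactly where Theorem~\ref{theo:uniqueYAB} enters, furnishing a well-defined boundary-started process, continuity of the solution of \eqref{TPode1} in its initial data, and the property $Y_t \not\in \wb A$ for $t > 0$ that prevents the limit from re-entering $A$. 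Matching the two limits and integrating over the law of $X_{\tau_{A,k}^-}$ completes the argument. I expect the main obstacle to be this final limit: controlling the $h$-transformed dynamics uniformly as $\Gamma_\epsilon$ collapses onto $\partial A$, where the drift $2a\nabla q/q$ blows up. The Hopf bound \eqref{hopf} and the a priori estimates underlying Theorem~\ref{theo:uniqueYAB} are what should make this tractable.
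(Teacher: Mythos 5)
Your proposal is correct and is essentially the paper's own proof: your level sets $\Gamma_\epsilon = \{q = \epsilon\}$ are exactly the boundary of the set $S$ used there, your crossing-time decomposition (stopping times at each crossing, strong Markov property, measurability of the no-earlier-success event, and the interior Doob $h$-transform identity) is the paper's computation yielding $\expE[F(Y^0_{\cdot\,+h_0})] = \expE[g(X_{\tau_{S,0}})]$ with $g(x) = \wh{\expE}[F(Y)\mid Y_0 = x]$, and your final limit $\epsilon \to 0$ uses exactly the paper's two ingredients (almost-sure collapse of the initial segment on one side, and continuity of $g$ up to $\partial A$, i.e.\ Proposition~\ref{prop:Y0contin} furnished by Theorem~\ref{theo:uniqueYAB}, on the other). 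The only difference is presentational: the paper carries out the summation as an explicit geometric series exploiting that $q \equiv \epsilon$ on $\partial S$, whereas you assert the resulting conditional-law identity for the successful excursion directly.
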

The processes $X_t$ and $Y^k_t$ are defined on a probability space
that is different from the one on which $Y_t$ is defined. The notation
$Y_0 \sim X_{\tau_{A,k}^-}$ used in Theorem~\ref{theo:XtauAmin} means
that $Y_0$ has the same law as $X_{\tau_{A,k}^-}$, meaning
$\mathbb{Q}(Y_0 \in U) = \Pm(X_{\tau_{A,k}^-} \in U)$ for any Borel
set $U \subset \Rm^d$.


\subsection{Reactive exit and entrance distributions}

The distribution of the random points $X_{\tau_{A,k}^-}$ will depend
in the initial condition $X_0$. From the point of view of sampling the
transition paths, however, there is a very natural distribution to
consider for $Y_0$. To motivate this distribution formally, let $h >
0$ and consider the regularized hitting times
\begin{align}
  \tau_{A,h} & = \inf \left\{ t \geq h \mid X_t \in \wb{A} \right\}  \\
  \tau_{B,h} & = \inf \left\{ t \geq h \mid X_t \in \wb{B} \right\},
\end{align}
where $X_t$ satisfies (\ref{XSDE}). Then define
\[
q_h(x) = \Pm(\tau_{A,h} > \tau_{B,h} \mid X_0 = x).
\]
This is the probability that at some time $s \in [0,h]$,
the path $X_t$ starting from $x \in \partial A$ becomes a transition path, not
returning to $\bar A$ before hitting $\bar B$. With this in mind, the
quantity
\[
\eta_{A,h}(x) = h^{-1} \rho(x) \Pm(\tau_{A,h} > \tau_{B,h} \mid X_0 =
x) = h^{-1} \rho(x) q_h(x),
\]
may be interpreted as a rate at which transition paths exit $A$, when
the system is in equilibrium. Therefore, a natural choice for an
initial distribution for $Y_0 \in \partial A$ is:
\[
\eta_{A}(x) = \lim_{h \to 0} \eta_{A,h}.
\]

By the Markov property, we have
\begin{equation}
  q_h(x) =  \int_{\Rm^d} \Pm(\tau_A > \tau_B\;|\; X_0 = y) \rho(h,x,y) \ud y =
  \expE[q(X_h)\mid X_0 = x]
\end{equation}
where $\rho(t,x,\cdot)$ is the density for $X_t$, given $X_0 =
x$. Therefore, for any $x \in \partial A$ we have
\[
\lim_{h \to 0} h^{-1} q_h(x) = \lim_{h \to 0} h^{-1}\expE[q(X_h) -
q(X_0)\;|\;X_0 = x] = L q(x),
\]
in the sense of distributions, although $q$ is not $C^2$ on $\partial
\Theta = \partial A \cup \partial B$. Hence $\eta_{A,h}(x) \to
\eta_A(x) = \rho(x)Lq(x)$ for $x \in \partial A$. The distribution
$Lq$ is supported on $\partial \Theta$. If $\phi$ is a smooth test
function supported on a set $B_r(x)$, a small neighborhood of $x
\in \partial A$, then we have
\begin{equation*}
  \begin{aligned}
    \avg{L q, \phi} & = \int_{\Rm^d} q(x) L^{\ast} \phi(x) \ud x \\
    & = \int_{B_r(x) \cap \Theta} L q(x) \phi(x) \ud x +
    \int_{(\partial A) \cap B_r(x)} \bigl( q \wh{n} \cdot \divop (a
    \phi) - (\wh{n} \cdot a \nabla q) \phi + q \wh{n} \cdot b\, \phi
    \bigr) \ud \sigma_A(x)
  \end{aligned}
\end{equation*}
where $\wh{n}(x)$ is the unit normal vector exterior to $\Theta$, and
$\ud\sigma_A$ is the surface measure on $\partial A$. Since $q = 0$
on $\partial A$ and $Lq = 0$ on $\Theta$, this implies,
\begin{equation*}
  \avg{L q, \phi} = - \int_{(\partial A) \cap B_r(x)} 
  \phi\, \wh{n} \cdot a \nabla q \ud \sigma_A(x).
\end{equation*}
That is (after a similar calculation for points on $\partial B$), 
\begin{equation}\label{eq:Lq}
  L q(x) = - \wh{n}(x) \cdot a(x)\nabla q(x) \ud \sigma_A(x) -
  \wh{n}(x) \cdot a(x) \nabla q(x) \ud \sigma_B(x),
\end{equation}
in the sense of distributions. Restricting on $\partial A$, we get 
\begin{equation}\label{etaArestrict}
  \eta_A = - \rho(x) \wh{n}(x) \cdot a(x)\nabla q(x) \ud \sigma_A(x). 
\end{equation}
By switching the role of $A$ and $B$ in the above discussion, it is
also natural to define a measure on $\partial B$ as
\begin{equation}
  \eta_B = \rho(x) \wh{n}(x) \cdot a(x) \nabla q(x) \ud \sigma_B(x).
\end{equation}
Note that $1 - q$ gives the forward committor function for the
transition from $B$ to $A$ and that $L q(x) = \eta_A(\rd x) -
\eta_B(\rd x)$. Although the distributions $\eta_A$ and $\eta_B$ are
positive (by (\ref{hopf})), they need not be probability
distributions. Nevertheless, the mass of the two measures is the same.
\begin{lem}\label{lem:sameMass} The measures $\eta_A$ and $\eta_B$
  satisfy $\eta_A(\partial A) = \eta_B(\partial B)$. That is,
  \begin{equation}\label{eq:normetaAetaB}
    \int_{\partial A} \rho(x) \wh{n}(x)\cdot a(x) \nabla q(x) \ud \sigma_A(x)
    + \int_{\partial B} \rho(x) \wh{n}(x)\cdot a(x) \nabla q(x) \ud \sigma_B(x) 
    = 0.
  \end{equation}
\end{lem}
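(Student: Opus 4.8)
The plan is to read \eqref{eq:normetaAetaB} as the statement that the net flux of a suitable divergence-free current through $\partial A$ and $\partial B$ is zero, and to prove it by a Green's (Lagrange) identity combined with the fact that the stationary probability current has no sources. First I would introduce the stationary current $J_\rho$ associated with the invariant density, with $i$-th component
\[
(J_\rho)_i = b_i(x)\rho(x) - \partial_{x_j}\bigl(a_{ij}(x)\rho(x)\bigr),
\]
summation over $j$ understood, so that $\nabla\cdot J_\rho = -L^*\rho = 0$ on all of $\Rm^d$ by \eqref{rhodef}. I then set
\[
V = \rho\, a\nabla q + q\, J_\rho.
\]
A direct computation, namely the Lagrange identity for $L$ and its formal adjoint $L^*$ together with the symmetry of $a$, yields the pointwise identity $\nabla\cdot V = \rho\, Lq - q\, L^*\rho$ wherever $q$ and $\rho$ are $C^2$. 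Since $\rho$ is smooth and $q$ is smooth in $\Theta$ (elliptic regularity, as $Lq=0$ there), and since $L^*\rho=0$ everywhere while $Lq=0$ in $\Theta$, this gives $\nabla\cdot V = 0$ in $\Theta$.

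Next I would apply the divergence theorem on the unbounded region $\Theta$ via a cutoff. Let $\chi_R$ be smooth, equal to $1$ on $B_R(0)$ and supported in $B_{2R}(0)$, with $R$ large enough that $\wb A\cup\wb B\subset B_R(0)$. Integrating $\nabla\cdot(\chi_R V)=\chi_R\nabla\cdot V+\nabla\chi_R\cdot V$ over $\Theta$ and using $\nabla\cdot V=0$ gives
\[
\int_{\partial\Theta} V\cdot\wh n\,\ud\sigma = \int_{\Theta}\nabla\chi_R\cdot V\,\ud x,
\]
where $\wh n$ is the unit normal exterior to $\Theta$. On $\partial A$ we have $q=0$, so $V\cdot\wh n = \rho\,\wh n\cdot a\nabla q$; on $\partial B$ we have $q=1$, so $V\cdot\wh n = \rho\,\wh n\cdot a\nabla q + \wh n\cdot J_\rho$. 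The extra term vanishes after integration: applying the divergence theorem to $J_\rho$ on the bounded set $B$, whose outward normal is $-\wh n$, and using $\nabla\cdot J_\rho=0$ gives $\int_{\partial B}\wh n\cdot J_\rho\,\ud\sigma_B = 0$. Hence the left-hand boundary term equals exactly $\int_{\partial A}\rho\,\wh n\cdot a\nabla q\,\ud\sigma_A+\int_{\partial B}\rho\,\wh n\cdot a\nabla q\,\ud\sigma_B$, which is the left side of \eqref{eq:normetaAetaB}.

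It then remains to show the right-hand side tends to $0$ as $R\to\infty$. Splitting $V$, the contribution of $q\, J_\rho$ is identically zero for every large $R$: since $\nabla\cdot J_\rho=0$ and $\chi_R\equiv 1$ near $\partial\Theta$, the same cutoff computation gives $\int_{\Theta}\nabla\chi_R\cdot J_\rho\,\ud x = \int_{\partial\Theta}J_\rho\cdot\wh n\,\ud\sigma = 0$, the last equality by the divergence theorem on $A$ and on $B$. For the remaining piece, $|\nabla\chi_R|\lesssim R^{-1}$ supported in the annulus $B_{2R}(0)\setminus B_R(0)$, so
\[
\Bigl|\int_{\Theta}\nabla\chi_R\cdot(\rho\, a\nabla q)\,\ud x\Bigr|\lesssim \frac1R\int_{B_{2R}(0)\setminus B_R(0)}\rho\,\abs{a\nabla q}\,\ud x,
\]
and interior gradient estimates for $Lq=0$ bound $\abs{a\nabla q}$ by at most polynomial growth in $\abs{x}$, while the ergodicity hypotheses on $b$ force $\rho$ to decay fast enough that $\rho\,\abs{a\nabla q}$ is integrable. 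The annular integral therefore tends to $0$, which completes the argument.

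The hard part is exactly this last step. The algebraic identity $\nabla\cdot V=\rho Lq-qL^*\rho$, the cancellation of the $J_\rho$ fluxes through $\partial A$ and $\partial B$, and the evaluation of the boundary integrals are all routine. The genuine obstacle is that $\Theta$ is unbounded, so one cannot apply the divergence theorem naively and must discard the flux of $V$ at infinity; justifying this rigorously requires quantitative decay (finite moments) of the invariant density $\rho$ together with a gradient bound on $q$, which is precisely where the structural assumptions guaranteeing ergodicity must be invoked.
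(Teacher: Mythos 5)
Your route is, at bottom, the paper's own argument written out explicitly: your field $V=\rho\,a\nabla q+qJ_\rho$ is exactly the current realizing the Lagrange identity $\nabla\cdot V=\rho\,Lq-q\,L^{\ast}\rho$, and the paper compresses the whole computation into the distributional statement that, by \eqref{eq:Lq}, the left side of \eqref{eq:normetaAetaB} equals $-\avg{\rho,Lq}=-\avg{L^{\ast}\rho,q}=0$. But there is a concrete error in your execution. The contribution of $qJ_\rho$ to the flux term is $\int_\Theta q\,\nabla\chi_R\cdot J_\rho\ud x$, not $\int_\Theta\nabla\chi_R\cdot J_\rho\ud x$: you silently dropped the factor $q$, which is not constant on the annulus $B_{2R}(0)\setminus B_R(0)$ where $\nabla\chi_R$ lives. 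Your identity $\int_\Theta\nabla\chi_R\cdot J_\rho\ud x=\int_{\partial\Theta}J_\rho\cdot\wh n\ud\sigma=0$ is true but is about the wrong integrand; the term you actually need is not ``identically zero,'' since $qJ_\rho$ is not divergence free ($\nabla\cdot(qJ_\rho)=J_\rho\cdot\nabla q$ in $\Theta$). Indeed, a divergence-theorem computation shows $\int_\Theta q\,\nabla\chi_R\cdot J_\rho\ud x=-\int_\Theta\chi_R\,J_\rho\cdot\nabla q\ud x$, which carries genuine content: it is precisely the cross term that cancels against the $\rho\,a\nabla q$ piece. The best one can do is bound it, using $0\le q\le 1$, by $CR^{-1}\int_{B_{2R}\setminus B_R}\abs{J_\rho}\ud x$, so this term requires decay at infinity of $\abs{b}\rho$ and $\abs{\divop(a\rho)}$ --- a strictly stronger requirement than the decay of $\rho\abs{a\nabla q}$ you discuss, because $b$ may grow at infinity (e.g.\ $b(x)\sim -x$). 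As written, your proof hides exactly the place where the strongest hypothesis enters.

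Even after that repair, the closing decay claims are asserted rather than proved: under the paper's standing assumptions ($a$, $b$ smooth plus an ergodicity condition) there is no stated estimate giving polynomial growth of $\nabla q$ or quantitative decay of $\rho$, $\nabla\rho$, so ``the annular integral tends to $0$'' remains a gap that would have to be filled, e.g.\ by Lyapunov-function bounds on $\rho$. To be fair, the paper is equally cavalier at the same point: its pairing $\avg{\rho,Lq}=\avg{L^{\ast}\rho,q}$ integrates by parts over all of $\Rm^d$ and discards boundary terms at infinity, declaring the result ``obvious.'' So your proposal is a legitimate unpacking of the paper's two-line proof, and it honestly flags where the analytic work lies; but you must restore the dropped factor of $q$, treat that term by the same annular estimate as the other one, and supply (or assume) enough decay of $\rho$ and its derivatives to kill both.
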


This computation motivates us to define
\begin{align}
  \label{eq:etaA-} & \eta_A^-(\rd x) = \frac{1}{\nu} \eta_A(\rd x) =
  - \frac{1}{\nu} \rho(x) \wh{n}(x) \cdot a(x) \nabla q(x) \ud \sigma_A(x), \\
  \label{eq:etaB-} & \eta_B^-(\rd x) = \frac{1}{\nu} \eta_B(\rd x) =
  \frac{1}{\nu} \rho(x) \wh{n}(x) \cdot a(x) \nabla q(x) \ud
  \sigma_B(x),
\end{align} 
We call these distributions the \textbf{reactive exit distribution} on
$\partial A$ and on $\partial B$, respectively. The constant $\nu$ is
a normalizing constant so that $\eta_A^-$ and $\eta_B^-$ define
probability measures on $\partial A$ and $\partial B$. By Lemma
\ref{lem:sameMass}, the normalizing constant is the same for both
measures. Our next result relates the reactive exit distribution on
$\partial A$ to the \textbf{empirical reactive exit distribution} on
$\partial A$, defined by
\begin{equation}\label{empexit}
  \mu_{A,N}^- = \frac{1}{N} \sum_{k=0}^{N-1} \delta_{X_{\tau_{A,k}^-}}(x).
\end{equation}

\begin{prop} \label{prop:exitdistr}
Let $\mu_{A,N}^-$ be the empirical reactive exit
  distribution on $\partial A$ defined by \eqref{empexit}. Then
  $\mu_{A,N}^-$ converges weakly to $\eta_A^-$ as $N \to \infty$. That
  is, for any continuous and bounded $f:\partial A \to \Rm$
  \[
  \lim_{N \to \infty} \int_{\partial A} f(x) \ud \mu_{A,N}^-(x) =
  \int_{\partial A} f(x) \ud \eta_A^-(x)
  \]
  holds $\Pm$-almost surely.
\end{prop}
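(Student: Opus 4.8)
The plan is to establish the weak convergence of the empirical reactive exit distribution to $\eta_A^-$ by invoking an ergodic theorem for the Markov process $X_t$. The central observation is that the successive reactive exit points $X_{\tau_{A,k}^-} \in \partial A$ form a sequence of random points on $\partial A$, and I want to show that their empirical average against a test function $f$ converges to the spatial average of $f$ against $\eta_A^-$. Since $X_t$ is ergodic with invariant measure $\rho$, the natural route is to connect the empirical average over reactive trajectories to a time average over the long trajectory of $X_t$, and then apply the ergodic theorem.

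**First I would** set up the right additive functional. For a continuous bounded $f:\partial A \to \Rm$, I would consider the quantity $\frac{1}{N}\sum_{k=0}^{N-1} f(X_{\tau_{A,k}^-})$ and relate the number of reactive exits in $[0,T]$ to elapsed time $T$. Writing $N(T)$ for the number of completed reactive trajectories up to time $T$, the law of large numbers for renewal-type counting (using ergodicity and the fact that the mean cycle time between successive reactive exits is finite and deterministic in the limit) gives $N(T)/T \to c$ for some constant rate $c$ almost surely. This constant rate should be identified with the total reactive flux $\nu$ (up to normalization), consistent with the heuristic computation preceding the proposition where $\eta_{A,h} = h^{-1}\rho\, q_h$ was interpreted as a rate. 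The key structural input is that the sequence of hitting cycles $(\tau_{A,k}^-, \tau_{B,k}^+, \tau_{A,k+1}^-, \dots)$ forms a regenerative or at least stationary-ergodic sequence, so that Birkhoff's ergodic theorem applies to the shift on the space of excursions.

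**The main obstacle will be** making rigorous the passage from the time-average of the underlying ergodic process to the empirical average over the discrete sequence of exit points, and in particular identifying the limiting weight as exactly $\eta_A^-$. Concretely, I expect to need a ratio ergodic theorem: I would express $\frac{1}{N}\sum_{k=0}^{N-1} f(X_{\tau_{A,k}^-})$ as the ratio of two additive functionals of the ergodic process, the numerator accumulating $f$ at each exit and the denominator counting exits, so that the normalization constant $\nu$ cancels in the correct way. To identify the spatial density of the limiting measure, I would localize: for $f$ supported near a boundary patch, the limiting rate of reactive exits through that patch must equal $-\nu^{-1}\rho(x)\,\wh{n}(x)\cdot a(x)\nabla q(x)$ integrated over the patch, which is precisely the flux identity \eqref{etaArestrict} derived in the preceding discussion. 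This identification is where the probabilistic rate $\lim_{h\to 0} h^{-1}\rho(x) q_h(x) = \rho(x) Lq(x)$ meets the PDE computation \eqref{eq:Lq}, and care is needed because $q$ fails to be $C^2$ across $\partial A$.

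**Finally I would** assemble these pieces: almost-sure convergence of the normalized flux through each boundary region, combined with Lemma \ref{lem:sameMass} to fix the global normalization $\nu$, yields $\frac{1}{N}\sum_{k=0}^{N-1} f(X_{\tau_{A,k}^-}) \to \int_{\partial A} f\, \rd\eta_A^-$ almost surely for each fixed continuous bounded $f$. Weak convergence of $\mu_{A,N}^-$ to $\eta_A^-$ then follows, noting that the almost-sure exceptional null set can be taken uniform over a countable dense family of test functions and extended to all continuous bounded $f$ by a standard approximation argument on the compact boundary $\partial A$.
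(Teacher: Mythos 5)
Your plan bypasses the paper's route (the paper obtains Proposition \ref{prop:exitdistr} in one line from Theorem \ref{theo:empirical}, taking $F(Y)=f(Y_0)$, so all the real work lives in that theorem's proof) and attempts a direct ergodic argument; that is legitimate in principle, but as written there are two genuine gaps, sitting exactly where the paper does its hard work. The first is the ergodic framework itself, which you assert rather than establish. The exit times $\tau_{A,k}^-$ are last-exit times, not stopping times, so $\frac{1}{N}\sum_k f(X_{\tau_{A,k}^-})$ is not an additive functional sampled along stopping times; the successive $A\to B$ cycles are not i.i.d., so the sequence is not classically regenerative; and $X_0$ is arbitrary, so nothing in sight is stationary. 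To get an almost-sure limit that is deterministic and independent of $X_0$, one needs either a uniqueness/minorization input for the chain of cycles --- this is what the paper supplies, proving a Doeblin condition via the Harnack inequality for harmonic measures (proof of Proposition \ref{prop:etaB+}) and then invoking Harris-chain theory for the path-valued chain $X^{A,n}$ --- or a Palm-type ergodic theorem for the stationary version of $X_t$ plus a coupling argument to transfer to arbitrary initial laws. Calling the cycles ``regenerative or at least stationary-ergodic'' presupposes precisely this missing ingredient.

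The second gap is more serious: your identification of the limit measure is essentially circular. You say the limiting rate of reactive exits through a boundary patch ``must equal'' $-\nu^{-1}\rho\,\wh{n}\cdot a\nabla q$, citing \eqref{etaArestrict} and \eqref{eq:Lq}. But in the paper that computation is explicitly formal motivation for the \emph{definition} of $\eta_A^-$: the limit $\lim_{h\to 0}h^{-1}q_h = Lq$ is taken only ``in the sense of distributions'' precisely because $q$ fails to be $C^2$ across $\partial A$, and it produces a global distribution, not the almost-sure, localized empirical exit intensity your ratio argument needs. Asserting that the Palm intensity of the exit point process equals this flux is a restatement of the proposition, and you supply no mechanism for proving it beyond acknowledging that ``care is needed.'' The paper's rigorous identification is entirely different: Green's-identity computations with harmonic measures (Proposition \ref{prop:etaAetaBmapH}, Corollary \ref{cor:invB}) show that $\eta_A^+$ is the unique invariant measure of the entrance-point chain on $\partial A$, and Lemma \ref{lem:etaminusplus} then shows $X_{\tau_{A,0}^-}\sim \eta_A^-$ when $X_0\sim\eta_A^+$, via the level set $S=\{q<\epsilon\}\cup \wb{A}$, the identification of the invariant measure $\zeta_S$ on $\partial S$, and a boundary Harnack argument to pass $\epsilon\to 0$. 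Some substitute for this analytic step (or for Theorem \ref{theo:empirical} wholesale) is indispensable; without it your outline does not close.
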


A similar statement holds for the reactive exit distribution on
$\partial B$ and the empirical distribution of the points
$X_{\tau_{B,k}^-}$.  The reactive exit distribution $\eta_A^-(\rd x)$ is related to the equilibrium measure $e_{A, B}(\rd x)$
in the potential theory for diffusion processes \cite{Sznitman:98,
  BoEcGaKl:04, BoGaKl:05}. In fact, the committor function $q$ is
known as the equilibrium potential in those works, and the equilibrium
measure $e_{A, B}(\rd x)$ is given by $Lq$ restricted on $\partial
A$. Specifically, we have
\begin{equation} \label{etaAeAB}
  \eta_A^-(\rd x) = \frac{1}{\nu} \rho(x) e_{A, B}(\rd x). 
\end{equation}
To the best of our knowledge, Proposition~\ref{prop:exitdistr} for the
first time characterizes the equilibrium measure from a dynamic
perspective.

\bigskip

We also identify the limit of the \textbf{empirical reactive entrance
  distribution} on $\partial B$, defined as
\begin{equation}\label{empentrance}
  \mu_{B, N}^{+} = \frac{1}{N} \sum_{k=0}^{N-1} \delta_{X_{\tau_{B, k}^+}}(x). 
\end{equation}
To describe its limit as $N \to \infty$, let us denote by $\wt{L}$ the
adjoint of $L$ in $L^2(\Rm^d, \rho(x) \rd x)$, given by
\begin{equation}\label{LtildeDef}
  \wt{L} u = - b \cdot \nabla u + \frac{2}{\rho} \divop( a \rho) \cdot \nabla u
  + \tr(a \nabla^2 u). 
\end{equation}
This corresponds to the generator of the time-reversed process $t
\mapsto X_{T - t}$ \cite{HaussmannPardoux:86}. Note that $\wt{L} = L$
if the SDE \eqref{XSDE} is reversible, \textit{i.e.} $L$ is
self-adjoint in $L^2(\Rm^d, \rho(x) \ud x)$. In addition to the
forward committor function $q(x)$ (recall \eqref{committorDef}), we also
define the \textbf{backward committor function} $\wt{q}(x)$ to be the
unique solution of
\[
\wt{L} \wt{q} = 0, \quad x \in \Theta
\]
with boundary condition
\[
\wt{q}(x) = 
\begin{cases}
  1, & x \in \partial A \\
  0, & x \in \partial B.
\end{cases}
\]
In terms of $\wt{q}$, we define the \textbf{reactive entrance
  distribution} on $\partial B$ as
\begin{equation}
  \label{eq:etaB+} \eta_B^+(\rd x) = - \frac{1}{\nu} 
  \rho(x) \wh{n}(x) \cdot a(x)
  \nabla \wt{q}(x) \ud \sigma_B(x)
\end{equation}
and analogously the reactive entrance distribution on $\partial A$
\begin{equation}
  \label{eq:etaA+} \eta_A^+(\rd x) = \frac{1}{\nu} 
  \rho(x) \wh{n}(x) \cdot a(x) \nabla \wt{q}(x)
  \ud \sigma_A(x).
\end{equation}
Again, $\nu$ is a normalizing constant so that these are probability
measures; $\nu$ is the same as the constant in \eqref{eq:etaA-}. The
following proposition justifies the definition of the reactive
entrance distribution.

\begin{prop}\label{prop:etaB+} Let $\mu_{B,N}^+$ be the empirical
  reactive entrance distribution on $\partial B$ defined by
  \eqref{empentrance}. Then $\mu_{B,N}^+$ converges weakly to
  $\eta_B^+$ as $N \to \infty$. That is, for any continuous and
  bounded $f:\partial B \to \Rm$
  \[
  \lim_{N \to \infty} \int_{\partial B} f(x) \ud \mu_{B,N}^+(x) =
  \int_{\partial B} f(x) \ud \eta_B^+(x)
  \]
  holds $\Pm$-almost surely.  
\end{prop}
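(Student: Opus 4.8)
The plan is to prove the proposition in two stages: first show that $\mu_{B,N}^+$ converges $\Pm$-almost surely to \emph{some} deterministic probability measure $\pi_B^+$ on $\partial B$, and then identify $\pi_B^+$ with $\eta_B^+$. For the first stage I would reuse the ergodic structure already developed for Proposition~\ref{prop:exitdistr}. Indexed by the number of completed $A\to B$ transitions, the sequence of entrance points $\{X_{\tau_{B,k}^+}\}_{k\ge 0}$ inherits a regenerative, positive recurrent structure from the strong Markov property at the times $\tau_{A,k}^+$ together with the ergodicity of $X_t$. The same cycle law of large numbers used for the exit distribution then shows that, for every bounded continuous $f:\partial B\to\Rm$, the averages $\tfrac1N\sum_{k=0}^{N-1} f(X_{\tau_{B,k}^+})$ converge $\Pm$-almost surely to a deterministic constant that does not depend on $X_0$; this defines $\pi_B^+$.

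The crux is the identification $\pi_B^+=\eta_B^+$, and the key point is that an entrance into $B$ is, under time reversal, an exit from $B$, which is exactly why the \emph{backward} committor $\wt q$ enters. I would work in the stationary regime $X_0\sim\rho$ and use that on a window $[0,T]$ the reversed trajectory $\wt X_s:=X_{T-s}$ has the law of the stationary diffusion with generator $\wt L$ from \eqref{LtildeDef}, the same diffusion matrix $a$, and the same invariant density $\rho$; time reversal modifies only the drift. Along a fixed realization, whenever the forward path enters $\wb B$ at some $\tau_{B,k}^+$ and begins a reactive segment, the reversed path leaves $\wb B$ toward $A$ at the corresponding reversed time. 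Hence the multiset of forward entrance points into $B$ on $[0,T]$ coincides, up to the at most two incomplete cycles at the ends of the interval, with the multiset of reactive exit points from $B$ of $\wt X$. Since $\wt q$ is precisely the forward committor of $\wt X$ for the transition $B\to A$ (it solves $\wt L\wt q=0$ with $\wt q=1$ on $\partial A$ and $\wt q=0$ on $\partial B$), applying the analogue of Proposition~\ref{prop:exitdistr} to $\wt X$ with the roles of $A$ and $B$ interchanged identifies the almost sure limit of the empirical reactive exit distribution of $\wt X$ on $\partial B$ as
\[
-\frac{1}{\nu}\,\rho(x)\,\wh{n}(x)\cdot a(x)\nabla\wt q(x)\,\ud\sigma_B(x)=\eta_B^+(\rd x).
\]
Matching the two counts gives $\pi_B^+=\eta_B^+$.

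The main obstacle is making this time-reversal correspondence rigorous at the level of empirical measures. Because $\wt X_s=X_{T-s}$ depends on the window length $T$, one cannot directly invoke an ergodic theorem for a single reversed process; instead one fixes the stationary forward process, reads off its reversed path on $[0,T]$, and controls the discrepancy between the forward entrance count and the reversed exit count, which differs by at most a bounded number of incomplete boundary cycles and hence contributes $O(1/N)\to 0$. One must also check that $\wt L$ satisfies the standing hypotheses so that Proposition~\ref{prop:exitdistr} genuinely applies to $\wt X$: uniform ellipticity and smoothness of $a$ persist, and $\rho$ remains the invariant density, so $\wt X$ is again positive recurrent with the required committor regularity and Hopf estimates. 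Finally, since the limit obtained in the first stage is a deterministic constant independent of the initial condition, the identification established in the stationary regime transfers to the original, arbitrary initial law of $X_0$, giving the stated $\Pm$-almost sure convergence.
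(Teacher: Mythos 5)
Your time-reversal idea is attractive, and it is indeed the conceptual reason the backward committor $\wt q$ governs the entrance distribution: under reversal of the stationary process, each forward entrance point into $B$ is a $B\to A$ reactive exit point of the $\wt L$-diffusion, whose committor for reaching $A$ before $B$ is exactly $\wt q$. But as written the argument has two genuine gaps. First, Stage 1 is an assertion, not a proof. The entrance points $\{X_{\tau_{B,k}^+}\}$ are not regenerative in any classical sense: a nondegenerate diffusion has no atoms, so there are no regeneration epochs, and the excursions between successive returns to $\wb A$ are neither independent nor identically distributed (each depends on the previous boundary point). What is true is that $\{X_{\tau_{B,k}^+}\}$ is a Markov chain on $\partial B$ with kernel $P_B(y,\rd y')=\int_{\partial A}H_{\wb{A}^c}(y,\rd x)\,H_{\wb{B}^c}(x,\rd y')$, and almost-sure convergence of its empirical measures to a limit independent of $X_0$ requires proving that this chain is ergodic. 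Ergodicity of the continuous-time process $X_t$ concerns time averages and does not by itself yield a law of large numbers for such per-transition statistics. The missing ingredient is precisely the heart of the paper's proof: by the Harnack inequality, $H_{\wb{B}^c}(x,\rd y)\geq C^{-1}\nu(\rd y)$ uniformly over $x\in\partial A$ for a fixed positive measure $\nu(\rd y)$ on $\partial B$, which gives a Doeblin minorization for $P_B$, hence a unique invariant measure and the almost-sure convergence you need.

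Second, Stage 2 is circular relative to the paper's logical structure. You invoke (the analogue of) Proposition~\ref{prop:exitdistr} for the reversed diffusion as a black box; but that proposition is never proved independently in the paper — it is a corollary of Theorem~\ref{theo:empirical}, whose proof relies on the machinery of the very proposition you are proving (the Doeblin argument, Corollary~\ref{cor:invB}, and the delicate Lemma~\ref{lem:etaminusplus}, which identifies the law of the non-stopping-time exit points via boundary Harnack estimates). So your proposal does not stand on its own unless you supply an independent proof of the reactive exit-distribution convergence for the $\wt L$-diffusion, which is essentially the difficulty you set out to avoid. The paper sidesteps this entirely: it identifies the invariant measure of $P_B$ as $\eta_B^+$ by purely PDE means — Green's identity (Lemma~\ref{lem:green}) yields the mapping relations of Proposition~\ref{prop:etaAetaBmapH}, whence $\eta_B^+$ is $P_B$-invariant (Corollary~\ref{cor:invB}) and $\eta_B^+(\partial B)=1$ with the same normalization $\nu$ as in \eqref{eq:etaA-}. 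Your remaining steps — matching forward entrances with reversed exits up to $O(1)$ incomplete cycles, the equality in law of the reversed window with the stationary $\wt L$-diffusion, and transferring the stationary-regime identification via the determinism of the Stage-1 limit — are fine in principle, but they rest on the two unproved pillars above.
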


A similar statement holds for the reactive entrance distribution on
$\partial A$ and the empirical distribution of the points $X_{\tau_{A,
    k}^+}$. 

\begin{rmk}
  If the SDE \eqref{XSDE} is reversible, we have $\wt{q} = 1 - q$, and
  hence $\eta_A^+(\rd x) = \eta_A^-(\rd x)$ and $\eta_B^+(\rd x) =
  \eta_B^-(\rd x)$.
\end{rmk}

In view of Proposition \ref{prop:exitdistr}, $\eta_A^-$ is a natural
choice for the distribution of $Y_0$. With this choice, the transition
path process $Y_t$ characterizes the empirical distribution of $A \to
B$ reactive trajectories, as the next theorem shows:

\begin{theo} \label{theo:empirical} Let $X_t$ satisfy the SDE
  \eqref{XSDE}.  Let $Y^k$ denote the $k^{\text{th}}$ $A \to B$
  reactive trajectory defined by \eqref{nthreactive}. Let $Y$ be the
  unique process defined by Theorem~\ref{theo:uniqueYAB} with initial
  distribution $Y_0 \sim \eta_A^-(\rd x)$ on $\partial A$ defined by
  \eqref{eq:etaA-}, and let $\mathcal{Q}_{\eta_A^-}$ denote the law of
  this process on $\mathcal{X} = C([0,\infty))$. Then for any $F \in
  L^1(\mathcal{X},\mathcal{B},\mathcal{Q}_{\eta_A^-})$, the limit
  \[
  \lim_{N \to \infty} \frac{1}{N} \sum_{k=0}^{N-1} F( Y^k ) =
  \wh{\expE}[ F(Y)]
  \] 
  holds $\Pm$-almost surely.
\end{theo}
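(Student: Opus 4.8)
The plan is to realize the sequence of reactive trajectories as a functional of a regenerative (Markov) structure, apply an ergodic theorem to obtain the $\Pm$-a.s.\ convergence of the empirical averages, and then identify the limit using Theorem~\ref{theo:XtauAmin} and Proposition~\ref{prop:exitdistr}. First I would set up the regeneration. The entrance times $\tau_{A,k}^+$ are genuine stopping times, and $W_k := X_{\tau_{A,k}^+} \in \partial A$. By the strong Markov property, the post-$\tau_{A,k}^+$ trajectory is, conditionally on $\mathcal{F}_{\tau_{A,k}^+}$, distributed as the solution of \eqref{XSDE} started at $W_k$; hence the successive \emph{cycles} $C_k := (X_{\tau_{A,k}^+ + s})_{0 \le s \le \tau_{A,k+1}^+ - \tau_{A,k}^+}$ form a Markov chain on cycle (path) space whose transition law depends only on the entrance point $W_k$, and $\{W_k\}$ is itself a Markov chain on $\partial A$. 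The structural point is that the last-exit time $\tau_{A,k}^-$ and the entrance time $\tau_{B,k}^+$ both lie inside the $k^{\mathrm{th}}$ cycle, so the reactive trajectory $Y^k$ defined by \eqref{nthreactive} is a measurable functional $Y^k = \Phi(C_k)$ of that single cycle, and $F(Y^k) = (F\circ\Phi)(C_k)$ is an additive functional of the cycle chain.

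Next I would establish that the cycle chain is ergodic with a unique stationary distribution. Because $X_t$ is positive recurrent with invariant probability $\rho$, the mean cycle length $\expE[\tau_{A,k+1}^+ - \tau_{A,k}^+]$ is finite; uniform ellipticity of $a$ together with smoothness of $b$ yields a Harnack/minorization bound for the transition kernel of $\{W_k\}$ over the compact set $\partial A$, giving a Doeblin condition and hence Harris ergodicity. By the ergodic theorem for Harris chains (equivalently, by passing to the stationary version of $X_t$, applying Birkhoff's theorem to the stationary ergodic sequence of cycles, and transferring to an arbitrary initial law via Harris recurrence), for every cycle functional $G$ that is integrable against the stationary cycle law one has
\[
\frac1N \sum_{k=0}^{N-1} G(C_k) \longrightarrow \expE_{\mathrm{stat}}[G(C_0)] \qquad \Pm\text{-a.s.}
\]
Applying this first with $G(C_k) = f(Z_k)$, $Z_k := X_{\tau_{A,k}^-}$ and $f$ continuous and bounded, the limit equals $\int_{\partial A} f \ud\pi$, where $\pi$ is the stationary marginal law of the exit point; since Proposition~\ref{prop:exitdistr} identifies the same limit as $\int_{\partial A} f \ud\eta_A^-$, matching over all such $f$ forces $\pi = \eta_A^-$. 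Consequently the stationary law of $Y^0 = \Phi(C_0)$ is exactly $\mathcal{Q}_{\eta_A^-}$, so $\expE_{\mathrm{stat}}[|F(Y^0)|] = \int |F| \ud\mathcal{Q}_{\eta_A^-} < \infty$ by the hypothesis $F \in L^1(\mathcal{X},\mathcal{B},\mathcal{Q}_{\eta_A^-})$, which legitimizes the ergodic theorem for the $L^1$ functional $F\circ\Phi$.

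It then remains to identify the limit. Applying the ergodic theorem with $G = F\circ\Phi$ gives that $\frac1N\sum_{k=0}^{N-1} F(Y^k)$ converges $\Pm$-a.s.\ to $\expE_{\mathrm{stat}}[F(Y^0)]$. Conditioning the stationary cycle on $Z_0 \in \partial A$ and invoking the conditional form of Theorem~\ref{theo:XtauAmin}, the conditional law of $Y^0$ given $Z_0$ is the law of the process $Y$ of Theorem~\ref{theo:uniqueYAB} started at $Z_0$, so
\[
\expE_{\mathrm{stat}}[F(Y^0)] = \expE_{\mathrm{stat}}\big[\, g(Z_0) \,\big], \qquad g(x) := \wh{\expE}[F(Y)\mid Y_0 = x].
\]
Since the stationary marginal of $Z_0$ is $\eta_A^-$, this equals $\int_{\partial A} g \ud\eta_A^- = \wh{\expE}[F(Y)]$ under $\mathcal{Q}_{\eta_A^-}$, completing the proof.

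I anticipate the main obstacle to be the second step: rigorously establishing Harris ergodicity of the induced cycle chain and the validity of the a.s.\ ergodic theorem for a merely $L^1$ (rather than bounded or $L^2$) functional, uniformly in the initial law of $X_0$. A secondary subtlety is that Theorem~\ref{theo:XtauAmin} must be used in its \emph{conditional} form, namely that the regular conditional law of $Y^k$ given its starting point $X_{\tau_{A,k}^-}$ is the transition-path law; this requires care because $\tau_{A,k}^-$ is a last-exit time rather than a stopping time, which is precisely the difficulty that the proof of Theorem~\ref{theo:XtauAmin} is designed to resolve.
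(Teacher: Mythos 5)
Your overall framework---regenerative cycles between successive entrance times $\tau_{A,k}^+$, the Markov chain of entrance points on $\partial A$, a Doeblin/Harnack minorization giving Harris ergodicity, and an ergodic theorem on path space combined with the conditional form of Theorem~\ref{theo:XtauAmin}---is exactly the skeleton of the paper's proof. However, there is a genuine circularity at the decisive step. You identify the stationary law $\pi$ of the exit points $Z_k = X_{\tau_{A,k}^-}$ by invoking Proposition~\ref{prop:exitdistr}, arguing that both the ergodic theorem and that proposition compute the same limit $\lim_N \frac1N \sum f(Z_k)$, so $\pi = \eta_A^-$. But Proposition~\ref{prop:exitdistr} has no independent proof in the paper: it is explicitly stated (twice) that it ``follows immediately from Theorem~\ref{theo:empirical}, by choosing $F(Y^k) = f(Y^k_0)$,'' and no separate proof is given. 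So you are using a corollary of the theorem to prove the theorem. Everything downstream of this identification---including the claim that the stationary law of $Y^0$ is $\mathcal{Q}_{\eta_A^-}$, which is what legitimizes applying the ergodic theorem to the merely-$L^1$ functional $F$---inherits the circularity.

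What is actually needed at this point is the paper's Lemma~\ref{lem:etaminusplus}: if $X_0 \sim \eta_A^+$ (the stationary \emph{entrance} distribution, which the Harris-chain argument does produce), then the last-exit point $X_{\tau_{A,0}^-}$ has law $\eta_A^-$. This is the technical heart of the proof and is not a soft consequence of the Markov structure, because $\tau_{A,0}^-$ is a last-exit time, not a stopping time. The paper proves it by introducing the level set $S = \{q < \epsilon\} \cup \wb{A}$, identifying (via Corollary~\ref{cor:invB}) the invariant measure $\zeta_S$ of the induced chain of hitting points on $\partial S$, and then showing $\zeta_S \to \eta_A^-$ weakly as $\epsilon \to 0$; that last step requires PDE input---the boundary Harnack inequality applied to $z_{f,S} = \epsilon\, u_{f,S}/q$ and a careful limit of $\nabla u_{f,S}$ at $\partial A$---none of which appears in your outline. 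Your two flagged ``obstacles'' (Harris ergodicity for $L^1$ functionals, and the conditional form of Theorem~\ref{theo:XtauAmin}) are real but manageable; the missing link is the $\eta_A^+ \mapsto \eta_A^-$ relation, for which you currently have no non-circular argument.
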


In particular, the limit $ \wh{\expE}[F(Y)]$ is independent of
$X_0$. Using Theorem~\ref{theo:empirical}, several interesting
statistics of the transition paths can be expressed in terms of the
quantities we have defined. Actually, Proposition~\ref{prop:exitdistr}
is an immediate corollary of Theorem~\ref{theo:empirical}, by choosing
$F(Y^k) = f(Y^k_0)$, so we will not give a separate proof of
Proposition~\ref{prop:exitdistr}.

\subsection{Reaction rate}

Let $N_T$ be the number of $A \to B$ reactive trajectories up to time
$T$: 
\begin{equation*}
  N_T = 1 + \max_k \{ k \geq 0 \mid \tau_{B, k}^+ \leq T \}. 
\end{equation*}
The \textbf{reaction rate} $\nu_R$ is defined by the limit
\begin{equation}\label{reacrate}
\nu_R = \lim_{T \to \infty} \frac{N_T}{T} = \lim_{k \to \infty} \frac{k}{\tau_{B, k}^+},
\end{equation}
and it is the rate of the transition from $A$ to $B$.  Also,
the limits
\begin{equation}\label{eq:TAB}
T_{AB} := \lim_{N \to \infty} \frac{1}{N} \sum_{k=0}^{N-1} (\tau_{B,k}^+ -
\tau_{A,k}^+)
\end{equation}
and
\begin{equation}\label{eq:TBA}
T_{BA} := \lim_{N \to \infty} \frac{1}{N} \sum_{k=0}^{N-1}
(\tau_{A,k+1}^+ - \tau_{B,k}^+)
\end{equation}
are the \textbf{expected reaction times} from $A \to B$ and $B \to A$,
respectively. The reaction rate from $A \to B$ and $B\to A$ are
  then given by $k_{AB} = T_{AB}^{-1}$ and $k_{BA} = T_{BA}^{-1}$.
Another interesting quantity is the {\bf expected crossover time} from
$A \to B$
\begin{equation}\label{eq:CAB}
C_{AB} := \lim_{N \to \infty} \frac{1}{N} \sum_{k=0}^{N-1} (\tau_{B,k}^+ -
\tau_{A,k}^-),
\end{equation}
which is the typical duration of the $A \to B$ reactive
intervals. Observe that $C_{AB} < T_{AB}$. Similarly, we define
\begin{equation}\label{eq:CBA}
C_{BA} := \lim_{N \to \infty} \frac{1}{N} \sum_{k=0}^{N-1} (\tau_{A,k+1}^+ -
\tau_{B,k}^-).
\end{equation}
The next result identifies these limits in terms of the committor
functions and the reactive exit and entrance distributions.

\begin{prop} \label{prop:reactrate} The limits \eqref{reacrate},
  \eqref{eq:TAB}, \eqref{eq:TBA}, \eqref{eq:CAB}, and \eqref{eq:CBA}
  hold $\Pm$-almost surely, and
  \begin{align*}
    & \nu_R = \nu = \int_{\RR^d} \rho(x) \nabla q(x) \cdot a(x) \nabla
    q(x) \ud x.
    \\
    & T_{AB} = \int_{\partial A} \eta_A^+(\rd x) u_B(x) =
    \frac{1}{\nu_R} \int_{\RR^d} \rho(x) \wt{q}(x) \ud x. \\
    & T_{BA} = \int_{\partial B} \eta_B^+(\rd x) u_A(x) =
    \frac{1}{\nu_R} \int_{\RR^d} \rho(x) ( 1- \wt{q}(x)) \ud x.\\
    & C_{AB} = \int_{\partial A} \eta_A^-(\rd x) v_B(x) =
    \frac{1}{\nu_R} \int_{\RR^d} \rho(x)q(x)\wt{q}(x) \ud x. \\
    & C_{BA} = \int_{\partial B} \eta_B^-(\rd x) v_A(x) =
    \frac{1}{\nu_R} \int_{\RR^d} \rho(x)(1 - q(x))(1 - \wt q(x))\ud x.
  \end{align*}
  Here $u_B(x) = \expE[\tau^X_{B} \;|\; X_0 = x ]$ is the mean first
  hitting time of $X_t$ to $\wb{B}$, and $v_B(x) = \wh{\expE}[
  \tau^Y_B \;|\; Y_0 = x]$ is the mean first hitting time of $Y_t$ to
  $\wb{B}$. Similarly, if $q$ is replaced by $(1 - q)$ in the
  definition of $Y$, then $v_{A}(x) = \wh{\expE}[ \tau^Y_A \;|\; Y_0 =
  x]$. Recall that $\nu$ is the normalizing factor for the reactive
  exit and entrance distributions.
\end{prop}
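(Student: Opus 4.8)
The plan is to reduce each limit to a space average against the invariant density $\rho$ by Birkhoff's ergodic theorem, and then to evaluate those averages using the forward and backward committors together with Theorems~\ref{theo:XtauAmin} and \ref{theo:empirical}. First I would record the renewal structure: by the strong Markov property the cycles delimited by the entrance times $\tau_{B,k}^+$ form a stationary, ergodic sequence under the stationary law $\Pm_\rho$, so the limits \eqref{reacrate}--\eqref{eq:CBA} all exist $\Pm$-almost surely and, decomposing each cycle into its $A\to B$ and $B\to A$ legs, $\nu_R=(T_{AB}+T_{BA})^{-1}$.

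For the bulk-integral expressions I would use an occupation-time interpretation. The half-line decomposes into alternating intervals $[\tau_{A,k}^+,\tau_{B,k}^+]$ and $[\tau_{B,k}^+,\tau_{A,k+1}^+]$; letting $\chi(t)=1$ on the former, the product $\nu_R T_{AB}$ is precisely the stationary fraction of time on which $\chi=1$, so by Birkhoff $\nu_R T_{AB}=\Pm_\rho(\chi(0)=1)$. Since $\chi(0)=1$ says the stationary path reaches $\wb A$ before $\wb B$ when run backward in time, and the reversed process has generator $\wt{L}$, conditioning on $X_0=x$ gives $\Pm_\rho(\chi(0)=1\mid X_0=x)=\wt{q}(x)$, whence $\nu_R T_{AB}=\int_{\RR^d}\rho\wt{q}\ud x$; symmetrically $\nu_R T_{BA}=\int\rho(1-\wt{q})\ud x$. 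For $C_{AB}$ I would note that $t$ lies in a reactive interval iff $X_t\in\Theta$ and two events hold that are conditionally independent given $X_t=x$ — reaching $\wb B$ before $\wb A$ forward (probability $q(x)$) and having last left $\wb A$ rather than $\wb B$ backward (probability $\wt{q}(x)$) — so the stationary fraction is $\int\rho q\wt{q}\ud x$ and $\nu_R C_{AB}=\int\rho q\wt{q}\ud x$; the formula for $C_{BA}$ follows by exchanging $A,B$, i.e.\ replacing $(q,\wt{q})$ by $(1-q,1-\wt{q})$.

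For the hitting-time expressions I would invoke the transition-path results. Because $\tau_{B,k}^+-\tau_{A,k}^-$ is the time the $k$th reactive trajectory $Y^k$ needs to reach $\wb B$, Theorem~\ref{theo:empirical} applied to $F(Y)=\tau_B^Y$ (which lies in $L^1(\mathcal{Q}_{\eta_A^-})$ since the conditioned process reaches $\wb B$ in finite mean time) gives $C_{AB}=\wh{\expE}[\tau_B^Y]=\int_{\partial A}\eta_A^-(\rd x)\,v_B(x)$ with $Y_0\sim\eta_A^-$, and $C_{BA}$ follows by the same symmetry. For $T_{AB}$, the strong Markov property at $\tau_{A,k}^+$ identifies $\tau_{B,k}^+-\tau_{A,k}^+$ with the hitting time of $\wb B$ for a fresh copy of $X$ started at $X_{\tau_{A,k}^+}$, whose conditional mean is $u_B(X_{\tau_{A,k}^+})$; combining the ergodic theorem with the weak convergence of the empirical reactive entrance distribution on $\partial A$ to $\eta_A^+$ (the $\partial A$ analogue of Proposition~\ref{prop:etaB+}) and uniform integrability of these increments then yields $T_{AB}=\int_{\partial A}\eta_A^+(\rd x)\,u_B(x)$, and likewise $T_{BA}$.

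The main obstacle is pinning down the rate itself, $\nu_R=\nu$, since the occupation-time relations above are mutually consistent but leave the overall normalization undetermined (indeed $T_{AB}+T_{BA}=\nu_R^{-1}\int\rho\ud x=\nu_R^{-1}$ is an identity). An independent flux computation is therefore needed. I would first prove the analytic identity $\nu=\int_{\RR^d}\rho\,\nabla q\cdot a\nabla q\ud x$ by integrating by parts over $\Theta$: writing $J=\rho a\nabla q$, the boundary term $\int_{\partial\Theta}q\,J\cdot\wh{n}\ud\sigma$ reduces (using $q=0$ on $\partial A$, $q=1$ on $\partial B$) to $\eta_B(\partial B)=\nu$, while $\nabla\cdot J=\tfrac{\rho}{2}\wt{L}q$ (from $Lq=0$) contributes $\tfrac12\int_\Theta q\rho\wt{L}q\ud x$, which vanishes because it equals the flux through $\partial B$ of the field $\divop(a\rho)-\rho b$, divergence-free precisely because $L^{\ast}\rho=0$. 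To then equate $\nu$ with the pathwise rate $\nu_R$, I would apply the generalized It\^o (It\^o--Tanaka) formula to $q(X_t)$: its martingale part has bracket $2\int_0^t\nabla q\cdot a\nabla q(X_s)\ud s$, so dividing by $T$ and using ergodicity gives bracket rate $2\nu$, while its finite-variation part is carried by the boundary local time on $\partial A\cup\partial B$, whose stationary flux through $\partial A$ — the rate at which excursions out of $A$ succeed in reaching $B$ — is exactly $\nu_R$ and equals $\nu$. Controlling this boundary local time given the non-$C^2$ behaviour of $q$ on $\partial\Theta$, and justifying the interchange of the $h\to0$ regularization $\eta_{A,h}\to\eta_A$ with the long-time average, is the delicate point on which the proof turns.
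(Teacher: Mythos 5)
Much of your outline is sound and parallels the paper: the renewal/Harris-chain structure giving almost sure limits and $T_{AB}+T_{BA}=1/\nu_R$, the identification $T_{AB}=\int_{\partial A}\eta_A^+(\rd x)\,u_B(x)$ via the ergodic theorem for the entrance chain on $\partial A$, and $C_{AB}=\int_{\partial A}\eta_A^-(\rd x)\,v_B(x)$ via Theorem~\ref{theo:empirical} applied to $F(Y)=\tau^Y_B$ (with integrability from Proposition~\ref{prop:Yexitbound}) are all in the paper's proof. Your occupation-time/time-reversal derivation of $\nu_R T_{AB}=\int_{\RR^d}\rho\,\wt{q}\ud x$ and $\nu_R C_{AB}=\int_{\RR^d}\rho\, q\,\wt{q}\ud x$ is a genuinely different route from the paper's (which obtains the bulk integrals by the Green-type identity \eqref{eq:green}), and it could be made rigorous via Haussmann--Pardoux time reversal; but, as you yourself observe, every relation produced this way has the normalization cancel, so this route cannot determine $\nu_R$.

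That is where the genuine gap lies: the central identity $\nu_R=\nu$, and your proposed It\^o--Tanaka argument does not close it. The ergodic rate of the finite-variation (boundary local-time) part of $q(X_t)$ at $\partial A$ is the Revuz-type quantity $\int_{\partial A}\rho\,(-\wh{n}\cdot a\nabla q)\ud\sigma_A=\nu$, which weights \emph{all} visits of $X_t$ to $\partial A$ by the conormal derivative of $q$; by contrast $\nu_R=\lim_T N_T/T$ counts only those excursions from $\partial A$ that reach $\wb{B}$ before returning to $\wb{A}$. Asserting that these two rates coincide is precisely the statement to be proven, and making it rigorous requires excursion theory (or the $h\to0$ interchange you defer), which you do not supply — so the proof is incomplete exactly at its crux. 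The gap is avoidable with tools you already have: evaluate $\int_{\partial A}\eta_A^+(\rd x)\,u_B(x)$ analytically by applying \eqref{eq:green} with $D=\Theta$, $\phi=\wt{q}$, $\psi=u_B$, and with $D=A$, $\phi=1$, $\psi=u_B$; this gives $\int_{\partial A}\eta_A^+(\rd x)\,u_B(x)=\frac{1}{\nu}\int_{\RR^d}\rho\,\wt{q}\ud x$, and symmetrically $\int_{\partial B}\eta_B^+(\rd x)\,u_A(x)=\frac{1}{\nu}\int_{\RR^d}\rho\,(1-\wt{q})\ud x$. Summing yields $T_{AB}+T_{BA}=1/\nu$, and comparison with your renewal identity $T_{AB}+T_{BA}=1/\nu_R$ gives $\nu_R=\nu$. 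This is the paper's route: the factor $1/\nu$ enters through $\eta_A^+$ and survives the pathwise identification of $T_{AB}$, whereas your occupation-time relations lose it. The remaining identity $\nu=\int_{\RR^d}\rho\,\nabla q\cdot a\nabla q\ud x$ is then the separate integration-by-parts computation, which you have essentially right and which matches the paper's.
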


The formula for $\nu_R$, $T_{AB}$, and $T_{BA}$ were obtained in
  \cite{EVa:06}. We also note that the crossover time for the transition path process
in one dimension was recently studied by \cite{Cerou:12}. 

\subsection{Density of transition paths}

We now consider the distribution $\rho_R$ as defined in \cite{EVa:06}:
\begin{equation}\label{rhoRdef}
  \rho_R(z) = \lim_{T \to \infty} \frac{1}{T} \int_0^T \delta(z - X_t) 
  \mathbb{I}_{R}(t) \ud t, \quad z \in \Theta,
\end{equation}
where $R$ is the random set of times at which $X_t$ is reactive: 
\[
R = \bigcup_{k = 0}^\infty [\tau_{A,k}^-,\tau_{B,k}^+].
\]
This distribution on $\Theta$ can be viewed as the density of
transition paths. By Proposition \ref{prop:reactrate}, and
Theorem~\ref{theo:empirical}, we can describe $\rho_R$ in terms of the
transition density for $Y_t$. Specifically, for any continuous and
bounded function $f:\Rm^d \to \Rm$, we have
\begin{equation*}
  \begin{aligned}
    \int_{\Theta} f(z) \rho_R(z) \ud z & = \nu_R \lim_{T \to \infty}
    \frac{1}{N_T}
    \int_0^T f(X_t) \mathbb{I}_{R}(t) \ud t \\
    & = \nu_R \lim_{N \to \infty} \frac{1}{N} \sum_{k=0}^{N-1}
    \int_0^{\tau_{B, k}^+-\tau_{A, k}^-} f\bigl(Y^k_t\bigr) \ud t \\
    & = \nu_R \, \wh{\expE}\left[ \int_0^{t_B} f(Y_t) \ud t \mid Y_0
      \sim \eta_A^- \right] \\
    & = \nu_R \int_0^\infty \int_{\Theta} Q_R(t,\eta_A^-, z) f(z)\ud
    z\, \ud t.
  \end{aligned}
\end{equation*}
Here $Q_R(t,\eta_A^-,z)$ is the density of $Y_t$, with $Y_0 \sim
\eta_A^-$, and killed at $\partial B$
\begin{equation}
  Q_R(t, \eta_A^-, z)  = \mathbb{Q}(Y_t \in \rd z, \, t < t_{B} 
  \mid Y_0 \sim \eta_A^-), 
\end{equation}
and $t_{B}$ is the first hitting time of $Y_t$ to $\wb{B}$.  Hence,
for $z \in \Theta$,
\begin{equation}
  \begin{aligned}
    \rho_R(z) & = \nu_R \int_0^\infty Q_R(t,\eta_A^-, z)  \ud t. \label{rhoRPR}
  \end{aligned}
\end{equation}

\begin{prop}\label{prop:rhoR}
  For all $z \in \Theta$,
  \begin{equation}\label{eq:rhoR}
    \rho_R(z) = \rho(z) q(z) \wt{q}(z). 
  \end{equation}
\end{prop}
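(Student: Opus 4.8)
The plan is to evaluate the time integral in the representation \eqref{rhoRPR}, i.e.\ to show that the Green potential
\[
  G(z) := \int_0^\infty Q_R(t,\eta_A^-,z)\ud t
\]
of the killed transition path process equals $\frac{1}{\nu_R}\rho(z)q(z)\wt q(z)$; multiplying by $\nu_R$ then yields \eqref{eq:rhoR}. Conceptually, $\rho(z)q(z)\wt q(z)$ is the equilibrium density at $z$ times the forward committor (the future leg reaches $B$ before $A$) times the backward committor (the past leg came from $A$ before $B$), which is exactly the stationary probability that $z$ is visited during a reactive portion of $X_t$; this is the identity we must extract from \eqref{rhoRPR}.

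First I would characterize $G$ weakly. Applying Dynkin's formula to the process $Y$ of Theorem~\ref{theo:uniqueYAB} up to $t\wedge\tau_B$ for a test function $\phi\in C^2(\wb\Theta)$, and letting $t\to\infty$ (using that $\tau_B<\infty$ almost surely and $Y_{\tau_B}\in\partial B$), one obtains
\[
  \int_\Theta (L^q\phi)(z)\, G(z)\ud z = \int_{\partial B}\phi\,\ud\mu_B - \int_{\partial A}\phi\,\ud\eta_A^- ,
\]
where $\mu_B$ is the exit law of $Y$ on $\partial B$ and $Y_0\sim\eta_A^-$. This is the weak statement that $(L^q)^* G = 0$ in $\Theta$, that $G$ vanishes on $\partial B$ (absorption), and that the inward probability current of $G$ across $\partial A$ equals the density of $\eta_A^-$.

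Second I would verify that the explicit candidate $\wh G(z)=\frac{1}{\nu_R}\rho(z)q(z)\wt q(z)$ solves this boundary value problem. For the interior equation, the $h$-transform structure $L^q f=\frac1q L(qf)$ gives the adjoint identity $(L^q)^* g = q\,L^*(g/q)$; combined with the defining property of $\wt L$ as the $L^2(\Rm^d,\rho\ud x)$-adjoint of $L$, which is equivalent to $L^*(\rho v)=\rho\,\wt L v$, and with $\wt L\wt q=0$ on $\Theta$, this yields
\[
  (L^q)^*\wh G = \frac{q}{\nu_R}\,L^*(\rho\wt q) = \frac{q}{\nu_R}\,\rho\,\wt L\wt q = 0 \quad\text{in }\Theta .
\]
On $\partial B$, $\wt q=0$ forces $\wh G=0$, matching the absorbing condition there.

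The delicate point, and the main obstacle, is the current at $\partial A$. Although $\wh G=0$ there (because $q=0$), the drift $K=b+2a\nabla q/q$ is singular, so the convective part $K\wh G$ does \emph{not} vanish: since $\wt q=1$ on $\partial A$ it has the finite limit $\frac{2}{\nu_R}\rho\,a\nabla q$. Adding the diffusive flux $-a\nabla\wh G=-\frac{1}{\nu_R}\rho\,a\nabla q$ on $\partial A$, the net inward current is $-\frac{1}{\nu_R}\rho\,\wh{n}\cdot a\nabla q$, which coincides with $\eta_A^-$ precisely because $\nu_R=\nu$ (Proposition~\ref{prop:reactrate}). Thus $\wh G$ satisfies the same weak problem as $G$; by uniqueness for this elliptic problem, $G=\wh G$, and \eqref{rhoRPR} gives $\rho_R=\nu_R\wh G=\rho q\wt q$. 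The two things requiring genuine care are retaining the nonvanishing limit of the singular convective flux at $\partial A$ (a naive computation dropping $K\wh G$ because $\wh G=0$ gives the wrong sign) and justifying the weak/Dynkin identity together with the uniqueness of $G$ despite the degeneracy of the drift as $q\to 0$ on $\partial A$.
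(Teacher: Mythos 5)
Your strategy is genuinely different from the paper's, and the verification half of it is sound: the adjoint identity $(L^q)^{\ast}g = q\,L^{\ast}(g/q)$ combined with $L^{\ast}(\rho v)=\rho\,\wt{L}v$ does give $(L^q)^{\ast}\bigl(\rho q\wt{q}/\nu_R\bigr)=0$ in $\Theta$, and your flux computation at $\partial A$ is correct, including the subtle point that the singular convective term $K\wh{G}$ contributes $\tfrac{2}{\nu_R}\rho\, a\nabla q$ in the limit, so that the net current $\wh{G}K-\divop(a\wh{G})$ equals $\tfrac{1}{\nu_R}\rho\, a\nabla q$ on $\partial A$ and matches $\eta_A^-$ once one knows $\nu=\nu_R$ from Proposition~\ref{prop:reactrate} (which is proved independently, so there is no circularity). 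The paper instead computes the Green potential \emph{explicitly}: Lemma~\ref{lem:GreenFcn} shows $\int_0^\infty Q_R(t,x,y)\ud t = q(y)G_\Theta(x,y)/q(x)$, where $G_\Theta$ is the classical Green's function of $L$ in $\Theta$, with the boundary value at $x\in\partial A$ obtained from Schauder estimates, the Hopf lemma, and a dominated-convergence argument resting on Propositions~\ref{prop:Y0contin} and~\ref{prop:Yexitbound}; then in \eqref{rhoRPR} the factors $\wh{n}\cdot a\nabla q$ cancel against the density \eqref{eq:etaA-} of $\eta_A^-$, and a single application of Lemma~\ref{lem:green} with $\phi=\wt{q}$, $\psi=G_\Theta(\cdot,z)$ yields $\rho q\wt{q}$ directly from $LG_\Theta(\cdot,z)=-\delta_z$ and $\wt{L}\wt{q}=0$.

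The genuine gap in your proposal is the final step, ``by uniqueness for this elliptic problem, $G=\wh{G}$,'' which is asserted rather than proved and is not a routine citation in this setting, for three compounding reasons. First, $\Theta=(\wb{A\cup B})^C$ is \emph{unbounded}, so uniqueness for $(L^q)^{\ast}u=0$ with the stated boundary data can only hold within a class of functions satisfying a decay condition at infinity; you neither specify that class nor verify that $G$ belongs to it (note $\wh{G}=\rho q\wt{q}/\nu_R$ decays only because $\rho$ does, so the uniqueness class must be calibrated to $\rho$, e.g.\ by passing to $u/( q\rho)$ and using that $\wt{L}(G/(q\rho))=0$, which in turn requires knowing $G/(q\rho)$ is bounded). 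Second, even formulating the strong boundary-value problem from your weak Dynkin identity requires a priori regularity of $G$ up to $\partial\Theta$ -- continuity of $G$, vanishing of $G$ on $\partial A$, and existence of the limits of $\nabla G$ and $G/q$ there -- none of which you establish; in the paper this regularity is exactly what the explicit formula of Lemma~\ref{lem:GreenFcn} delivers (via Schauder and Hopf applied to $G_\Theta$), and your route bypasses that formula without replacing it. Third, the Dynkin identity itself needs justification, since $L^q\phi$ contains the term $\tfrac{2a\nabla q}{q}\cdot\nabla\phi$ which blows up precisely where $Y$ starts; one must show $\wh{\expE}\int_0^{t\wedge\tau_B}\abs{L^q\phi(Y_s)}\ud s<\infty$ for $Y_0\sim\eta_A^-$, which is plausible (as in the Bessel(3) model) but is a real estimate, not a formality. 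You flag the first and third issues as ``requiring genuine care,'' but flagging them does not discharge them: the uniqueness assertion and the boundary regularity of $G$ are the mathematical content of the proposition, and as written your argument does not contain them.
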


This formula for $\rho_R$ was first derived in \cites{Hummer:04,
    EVa:06}.

\subsection{Current of transition paths}
The density $Q_R(t,\eta_A^-,z)$ satisfies the adjoint equation
\[
\frac{\partial}{\partial t} Q_R(t,\eta_A^-,z) = (L^q)^{\ast}
Q_R(t,\eta_A^-,z), \quad z \in \Theta
\]
where $(L^q)^{\ast}$ is the adjoint of $L^q$:
\[
(L^q)^{\ast} u = \sum_{i,j} (a_{ij}(z) u(z))_{z_i z_j} - \sum_{i} (K_i(z) u(z))_{z_i}
\]
and $K$ is defined by (\ref{Kvecdef}). Integrating from $t = 0$ to $t
= \infty$ we see that $\rho_R(z)$ satisfies
\[
(L^q)^{\ast} \rho_R(z) = 0, \quad z \in \Theta.
\]
In divergence form, this equation is
\begin{equation}\label{eq:divJR}
\nabla_z \cdot J_R(z) = 0,
\end{equation}
where the vector field
\begin{equation}\label{eq:JR}
  \begin{aligned}
    J_R(z) & = \rho_R(z) \biggl(b(z) - \frac{2 a
      \nabla q(z)}{q(z)}\biggr) + \divop( a(z)\rho_R(z)) \\
    & = \Bigl(b(z) \rho(z) - \divop\bigl(a(z) \rho(z)\bigr)\Bigr) q(z)
    \wt{q}(z) + \rho(z) a(z) \Bigl( \wt{q}(z) \nabla q(z) - q(z)
    \nabla \wt{q}(z) \Bigr).
  \end{aligned}
\end{equation}
is continuous over $\wb{\Theta}$. The vector field $J_R(z)$,
identified in \cite{EVa:06}, may be regarded as the \textbf{current of
  transition paths} (see Remark~\ref{remark:current}).  Observe that if the SDE 
\eqref{XSDE} is reversible, we have $\wt{q} = 1 - q$ and
\begin{equation*}
  b(z) \rho(z) - \divop(a(z) \rho(z)) = 0,  
\end{equation*}
and hence the current given by (\ref{eq:JR}) simplifies to 
\begin{equation*}
J_R(z) = \rho(z) a(z) \nabla q(z).
\end{equation*}
This was observed already in \cite{EVa:06}.

\smallskip

On the boundary, the current \eqref{eq:JR} is related to the reactive
exit and entrance distributions.
\begin{prop}\label{prop:Jboundary}
  We have
  \begin{equation*}
    J_R = \rho a \nabla q \; 
    \text{on }\partial A, \quad \text{and} \quad 
    J_R = - \rho a \nabla \wt{q}, \; \text{on }\partial B,
  \end{equation*}
  and hence,
  \begin{equation*}
    \eta_A^-(\rd x) = - \nu_R^{-1} \wh{n}(x) \cdot J_R(x) \ud \sigma_A(x)
    \quad \text{and} \quad 
    \eta_B^+(\rd x) = \nu_R^{-1} \wh{n}(x) \cdot J_R(x) \ud \sigma_B(x).
  \end{equation*}
\end{prop}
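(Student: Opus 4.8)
The plan is to prove the boundary identities by directly substituting the boundary values of $q$ and $\wt{q}$ into the explicit formula \eqref{eq:JR} for $J_R$, which is already in hand, and then to read off the distributional statements by comparison with the definitions \eqref{eq:etaA-} and \eqref{eq:etaB+}.

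First I would recall that $J_R$ has the closed form
\[
J_R(z) = \Bigl(b(z)\rho(z) - \divop(a(z)\rho(z))\Bigr) q(z)\wt{q}(z) + \rho(z) a(z)\Bigl(\wt{q}(z)\nabla q(z) - q(z)\nabla \wt{q}(z)\Bigr),
\]
which the text asserts is continuous up to $\wb{\Theta}$; this continuity is what licenses evaluating $J_R$ on $\partial A$ and $\partial B$ as boundary limits of the interior values. On $\partial A$ the committor satisfies $q = 0$ (by \eqref{committorBC}) and the backward committor satisfies $\wt{q} = 1$. Inserting these values annihilates both the first term (through the factor $q\wt{q}$) and the subterm $q\nabla\wt{q}$, leaving $J_R = \rho\, a\, \nabla q$ on $\partial A$. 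Symmetrically, on $\partial B$ we have $q = 1$ and $\wt{q} = 0$, so the first term again vanishes and the surviving contribution is $J_R = -\rho\, a\, \nabla \wt{q}$ on $\partial B$. This is the first assertion.

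For the distributional statements I would take the exterior-normal component of these boundary values. On $\partial A$, contracting $J_R = \rho a\nabla q$ with $\wh{n}$ gives $\wh{n}\cdot J_R = \rho\,\wh{n}\cdot a\nabla q$; comparing with the definition \eqref{eq:etaA-} of $\eta_A^-$ and recalling from Proposition \ref{prop:reactrate} that the normalizing constant $\nu$ coincides with the reaction rate $\nu_R$, one obtains $\eta_A^-(\rd x) = -\nu_R^{-1}\wh{n}(x)\cdot J_R(x)\ud\sigma_A(x)$. The same contraction on $\partial B$ gives $\wh{n}\cdot J_R = -\rho\,\wh{n}\cdot a\nabla\wt{q}$, which matched against \eqref{eq:etaB+} yields $\eta_B^+(\rd x) = \nu_R^{-1}\wh{n}(x)\cdot J_R(x)\ud\sigma_B(x)$.

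Since the formula for $J_R$ is already established, there is no serious obstacle: the argument is essentially a substitution of boundary data. The only points requiring care are that the gradients $\nabla q$ and $\nabla\wt{q}$ appearing at the boundary are the one-sided limits from within $\Theta$ (justified by the smoothness of $q$ and $\wt{q}$ up to $\partial\Theta$ together with the asserted continuity of $J_R$), and the bookkeeping of the sign conventions alongside the identity $\nu = \nu_R$.
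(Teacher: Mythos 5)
Your proposal is correct and is essentially the paper's own proof: the paper likewise establishes the boundary identities by substituting $q=0,\ \wt{q}=1$ on $\partial A$ and $q=1,\ \wt{q}=0$ on $\partial B$ into the formula \eqref{eq:JR}, and the distributional statements then follow by comparison with \eqref{eq:etaA-} and \eqref{eq:etaB+} together with $\nu = \nu_R$ from Proposition \ref{prop:reactrate}. Your added remarks on the continuity of $J_R$ up to $\wb{\Theta}$ and the one-sided boundary limits of the gradients are sound and merely make explicit what the paper leaves implicit.
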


As an immediate corollary, we have an additional formula for the
reaction rate.
\begin{cor}\label{cor:SJR}
  Let $S$ be a set with smooth boundary that contains $A$ and
  separates $A$ and $B$, we have
  \begin{equation}
    \nu_R = \int_{\partial S} \wh{n}(x) \cdot J_R(x) \ud \sigma_{S}(x),
  \end{equation}
  where $\wh{n}$ is the unit normal vector exterior to $S$. 
\end{cor}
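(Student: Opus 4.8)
The plan is to integrate the divergence-free identity \eqref{eq:divJR} over the region $\Omega = S \setminus \wb{A}$ and read off the boundary terms. Because $S$ contains $A$ and separates $A$ from $B$, the hypersurface $\partial S$ lies entirely in $\Theta$, so $\Omega \subset \Theta$ and its boundary decomposes as $\partial \Omega = \partial S \cup \partial A$. On $\Theta$ we have $\divop J_R = 0$ by \eqref{eq:divJR}, and $J_R$ is continuous on $\wb{\Theta}$ (as noted after \eqref{eq:JR}). Applying the divergence theorem to $\Omega$ would then give
\[
0 = \int_\Omega \divop J_R(z) \ud z = \int_{\partial S} \wh{n}(x) \cdot J_R(x) \ud \sigma_S(x) + \int_{\partial A} \wh{n}(x) \cdot J_R(x) \ud \sigma_A(x),
\]
where along $\partial S$ the vector $\wh{n}$ is the unit normal exterior to $S$ (hence exterior to $\Omega$), and along $\partial A$ the outward normal to $\Omega$ points \emph{into} $A$, which is precisely the normal $\wh{n}$ exterior to $\Theta$ used throughout.

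Next I would evaluate the boundary term on $\partial A$ using Proposition \ref{prop:Jboundary}. There we have $J_R = \rho a \nabla q$ on $\partial A$, and consequently $\wh{n}(x) \cdot J_R(x) \ud \sigma_A(x) = -\nu_R \, \eta_A^-(\rd x)$ by \eqref{eq:etaA-}. Since $\eta_A^-$ is a probability measure on $\partial A$, we obtain $\int_{\partial A} \wh{n} \cdot J_R \ud \sigma_A = -\nu_R$. Substituting this into the displayed identity gives
\[
0 = \int_{\partial S} \wh{n}(x) \cdot J_R(x) \ud \sigma_S(x) - \nu_R,
\]
which is exactly the claimed formula.

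The delicate point is the orientation bookkeeping: one must verify that the outward normal of $\Omega = S \setminus \wb{A}$ restricted to $\partial A$ coincides with the $\wh{n}$ convention (pointing into $A$) used in Proposition \ref{prop:Jboundary} and in \eqref{eq:etaA-}, since a sign slip there would reverse the conclusion. A secondary technical issue is that the drift in \eqref{TPode} — and hence $J_R$ — is singular at $\partial A$, so $J_R$ need not be $C^1$ up to that boundary; I would handle this by first applying the divergence theorem on $S \setminus \wb{A_\delta}$ for a slightly enlarged neighborhood $A_\delta \supset \wb{A}$, where $J_R$ is smooth, and then letting $\delta \to 0$, using the continuity of $J_R$ on $\wb{\Theta}$ together with the boundary identity $\wh{n}\cdot J_R \ud\sigma_{\partial A_\delta} \to -\nu_R\,\eta_A^-$. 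Everything else is direct substitution.
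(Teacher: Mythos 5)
Your proposal is correct and is essentially the paper's own proof: the paper likewise combines Proposition \ref{prop:Jboundary} (giving $\nu_R = -\int_{\partial A} \wh{n}\cdot J_R \ud\sigma_A$) with the divergence theorem applied to the divergence-free field $J_R$ on the region between $\partial A$ and $\partial S$. Your extra $A_\delta$-regularization is harmless but not needed, since the second expression in \eqref{eq:JR} shows $J_R$ is built from $q$, $\wt{q}$, $\rho$, which are smooth up to $\partial A$, so the singular drift never actually enters.
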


The current $J_R$ generates a (deterministic) flow in $\wb{\Theta}$
stopped at $\partial B$:
\begin{equation}\label{eq:Z}
  \frac{\ud Z_t^z}{\ud t} = J_R(Z_t^z), \quad \text{for } 0 \leq t \leq t_B, 
  \quad Z_0^z = z
\end{equation}
where $t_B = t_B(z)$ is the time at which $Z_t$ reaches $\partial B$. As $J_R$ is
divergence free in $\Theta$, $J_R \cdot \wh{n} < 0$ on $\partial A$,
and $J_R \cdot \wh{n} > 0$ on $\partial B$, $t_B(z)$ is finite for any
$z \in \wb{\Theta}$. The flow naturally defines a map
$\Phi_{J_R}: \partial A \to \partial B$: given any point $z
\in \partial A$, we define
\begin{equation}\label{eq:phiJ}
  \Phi_{J_R}(z) = Z_{t_B}^z \in \partial B. 
\end{equation}

\begin{prop}\label{prop:JR}
For any $f \in C^1(\Rm^d)$,
\begin{equation}\label{eq:defJR}
  \int_{\partial B} f(x) \eta_B^+(\rd x) 
  - \int_{\partial A} f(x) \eta_A^-(\rd x) 
  = \frac{1}{\nu_R} \int_{\Theta} J_R \cdot \nabla f \ud x.
\end{equation}
In particular,
\begin{equation*}
  \Phi_{J_R, \ast}(\eta_A^-) = \eta_B^+, 
\end{equation*}
where $\Phi_{J_R, \ast}(\eta_A^-)$ is the pushforward of the measure $\eta_A^-$
by the map $\Phi_{J_R}$.
\end{prop}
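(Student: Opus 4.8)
The plan is to read off \eqref{eq:defJR} from the divergence theorem applied to the vector field $f J_R$ on $\Theta$, and then to deduce the pushforward statement by testing \eqref{eq:defJR} against functions that are constant along the trajectories of the flow \eqref{eq:Z}. The boundary identities of Proposition~\ref{prop:Jboundary} and the divergence-free property \eqref{eq:divJR} do all the work; the only genuine analytic points are the behavior at infinity and the regularity of the flow.

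To establish \eqref{eq:defJR}, observe that since $J_R$ is divergence-free in $\Theta$ we have $\divop(f J_R) = \nabla f \cdot J_R$ there, so the divergence theorem on $\Theta$ gives
\[
\int_{\Theta} \nabla f \cdot J_R \ud x = \int_{\partial A} f\, \wh{n}\cdot J_R \ud \sigma_A + \int_{\partial B} f\, \wh{n}\cdot J_R \ud \sigma_B,
\]
where $\wh{n}$ is the exterior normal to $\Theta$ on $\partial\Theta = \partial A \cup \partial B$. Substituting $\wh{n}\cdot J_R \ud\sigma_A = -\nu_R\,\eta_A^-(\rd x)$ and $\wh{n}\cdot J_R \ud\sigma_B = \nu_R\,\eta_B^+(\rd x)$ from Proposition~\ref{prop:Jboundary} and dividing by $\nu_R$ yields \eqref{eq:defJR}. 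The point requiring care is that $\Theta$ is unbounded, so I would instead apply the divergence theorem on $\Theta \cap B_R(0)$ and send $R \to \infty$, verifying that the flux across $\partial B_R(0)$ vanishes. This follows from the decay of $J_R$ at infinity: by \eqref{eq:JR}, $J_R$ is controlled by $\rho$ and $\divop(a\rho)$ times the bounded quantities $q,\wt{q},\nabla q,\nabla\wt{q}$, and under the dissipativity hypothesis on $b$ the invariant density $\rho$ has rapidly decaying tails, so $f\,J_R\,R^{d-1}\to 0$ for any $f$ of at most polynomial growth (in particular for every test function used below).

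For the pushforward identity I would choose the test function in \eqref{eq:defJR} to be constant along the integral curves of $J_R$. Given $g\in C^1$ defined near $\partial B$, define $G$ on $\wb{\Theta}$ by $G(w) = g(\Psi(w))$, where $\Psi(w)\in\partial B$ is the forward endpoint along the trajectory \eqref{eq:Z} issuing from $w$; this is well defined because $t_B(w)<\infty$ for every $w\in\wb{\Theta}$. By construction $G$ is constant along trajectories, so $\nabla G \cdot J_R = \tfrac{\rd}{\rd t} G(Z^w_t)\big|_{t=0}=0$ on $\Theta$ and the right-hand side of \eqref{eq:defJR} vanishes. Moreover $G|_{\partial B}=g$, since a point of $\partial B$ is its own endpoint, while $G|_{\partial A}(z)=g(\Phi_{J_R}(z))$ by the definition \eqref{eq:phiJ} of $\Phi_{J_R}$. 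Hence \eqref{eq:defJR} collapses to
\[
\int_{\partial B} g\, \eta_B^+(\rd x) = \int_{\partial A} g\circ\Phi_{J_R}\, \eta_A^-(\rd x) = \int_{\partial B} g \ \rd\bigl(\Phi_{J_R,\ast}\eta_A^-\bigr),
\]
and since $g$ is arbitrary this gives $\Phi_{J_R,\ast}(\eta_A^-)=\eta_B^+$.

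The main obstacle is the regularity of the endpoint map $\Psi$ (equivalently of $\Phi_{J_R}$), which determines whether $G$ is admissible in \eqref{eq:defJR}. Because $J_R$ is continuous on $\wb{\Theta}$ and, by Proposition~\ref{prop:Jboundary} together with the Hopf estimates \eqref{hopf}, has strictly negative normal component on $\partial A$ and strictly positive normal component on $\partial B$, the trajectories enter transversally along $\partial A$ and exit transversally along $\partial B$; combined with $t_B<\infty$ this yields smooth dependence of $\Psi$ on $w$ at every point where $J_R$ does not vanish, so $G$ is $C^1$ away from the zero set of $J_R$. To handle possible interior zeros of $J_R$ and general $C^1$ boundary data rigorously, I would either pass to a flow-tube form of the balance law — equating, for a Borel set $E\subset\partial B$, the inward flux of $J_R$ through $\Phi_{J_R}^{-1}(E)\subset\partial A$ to the outward flux through $E$, the lateral boundary being swept out by trajectories and hence flux-free by $\nabla\cdot J_R=0$ — or approximate $G$ by $C^1$ functions and pass to the limit in \eqref{eq:defJR}. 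Showing that the set of trajectories meeting the zero set of $J_R$ is negligible for $\eta_A^-$ is the technical crux that makes either argument airtight.
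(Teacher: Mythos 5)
Your proof follows essentially the same route as the paper's: the divergence theorem on $\Theta$ combined with the boundary identities of Proposition~\ref{prop:Jboundary} and $\nabla\cdot J_R=0$ gives \eqref{eq:defJR}, and the pushforward identity follows by extending boundary data from $\partial B$ to $\wb{\Theta}$ so as to be constant along the flow of $J_R$, which is exactly the paper's construction \eqref{eq:defg}. The two technical points you flag --- the flux at infinity over the unbounded domain $\Theta$, and the regularity of the endpoint map (in particular near possible zeros of $J_R$) needed for the extension to be admissible in \eqref{eq:defJR} --- are genuine refinements of details that the paper's terse proof passes over silently.
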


Hence, $J_R$ characterizes ``the flow of reactive trajectories'' from
$A$ to $B$.

\begin{rmk}\label{remark:current}
  Note that by Proposition~\ref{prop:exitdistr} and
  Proposition~\ref{prop:etaB+}, the left hand side of \eqref{eq:defJR}
  is equal, $\Pm$-almost surely, to the limit
  \begin{equation*}
    \lim_{N \to \infty} \frac{1}{N} \sum_{n=0}^{N-1} \bigl( f(X_{\tau_{B, n}^+})
    - f(X_{\tau_{A, n}^-}) \bigr).
  \end{equation*}
  If $X_t$ was differentiable, we would have
  \begin{equation*}
    \begin{aligned}
      \lim_{N \to \infty} \frac{1}{N} \sum_{n=0}^{N-1} \Bigl(
      f(X_{\tau_{B, n}^+}) - f(X_{\tau_{A, n}^-}) \Bigr) & = \lim_{T
        \to \infty} \frac{1}{\nu_R} \frac{1}{T} \int_0^{T}
      1_R(t) \frac{\rd}{\rd t} f(X_t) \ud t \\
      & \text{``} = \frac{1}{\nu_R} \int_{\Theta} \ud x \nabla f(x)
      \cdot \lim_{T \to \infty} \frac{1}{T} \int_0^T \dot{X}_t
      \delta(x - X_t) 1_R(t) \ud t \ \text{''},
    \end{aligned}
  \end{equation*}
  Combining this with Proposition~\ref{prop:JR}, we arrive at a
  formal characterization of $J_R$
  \begin{equation*}
    J_R    \text{``} = \lim_{T \to \infty}
    \frac{1}{T} \int_0^T \dot{X}_t \delta(x - X_t) 1_R(t) 
    \ud t\ \text{''}. 
  \end{equation*}
  This formal expression was used in \cite{EVa:06} to define $J_R$.
\end{rmk}

\subsection{Related work} \label{sec:related}

As we have mentioned, our work is closely related to the transition path theory developed by E and
  Vanden-Eijnden \cites{EVa:06, MeScVa:06, EVa:10}, which is
  a framework for studying the transition paths. In particular, based on
  the committor function, formula for reaction rate, density and
  current of transition paths were obtained in \cite{EVa:06}. Our main motivation is to understand the probability law of the
transition paths. The main results Theorem~\ref{theo:uniqueYAB},
Theorem~\ref{theo:XtauAmin}, and Theorem~\ref{theo:empirical} identify
an SDE which characterizes the law of the transition paths in $C([0,\infty))$. Therefore,
as an application of these results, we are able to give rigorous
proofs for the formula for reaction rate, density and current of
transition paths in \cite{EVa:06}. We note that in the discrete case,
a generator analogous to \eqref{Lconditioned} was also proposed very
recently in~\cite{EricPreprint} for Markov jumping processes.

The transition paths start at $\partial A$ and terminate at $\partial B$,
and hence they can be viewed as paths of a bridge process between $\wb{A}$
and $\wb{B}$. In this perspective, our work is related to the conditional
path sampling for SDEs studied in \cites{StuartVossWiberg:04,
  ReznikoffVandenEijnden:05, HairerStuartVossWiberg:05,
  HairerStuartVoss:07}. In those works, stochastic partial
differential equations were proposed to sample SDE paths with fixed
end points. However, the paths considered were different from the
transition paths as their time duration is fixed a priori. It
would be interesting to explore SPDE-based sampling strategies for the
transition path process identified in Theorem~\ref{theo:uniqueYAB}.

Let us also point out that in the work we present here we do not assume that the noise $\sigma$ is small, as is the case in the asymptotic results of \cite{BoEcGaKl:04, BoGaKl:05, Cerou:12}, which we have mentioned already, and also in some other works, such as the large deviation theory of Freidlin and Wentzell \cite{FW:84}.

\bigskip

The rest of the paper is organized as follows. Theorem
\ref{theo:uniqueYAB} and Theorem \ref{theo:XtauAmin} are proved in
Section \ref{sec:TPP}. In Section \ref{sec:REED} we prove Lemma
\ref{lem:sameMass}, Proposition \ref{prop:etaB+} and Theorem
\ref{theo:empirical} related to the reactive entrance and exit
distributions. As we have mentioned, Proposition \ref{prop:exitdistr}
follows immediately from Theorem \ref{theo:empirical}, so we do not
give a separate proof of it. Proposition \ref{prop:reactrate},
Proposition \ref{prop:rhoR}, Proposition \ref{prop:Jboundary},
Corollary \ref{cor:SJR}, and Proposition \ref{prop:JR} are proved in
Section \ref{sec:appltpt}.

\section{The Transition Path Process} \label{sec:TPP}

\begin{proof}[Proof of Theorem \ref{theo:uniqueYAB}]

  Without loss of generality, we prove the theorem in the case that
  $\xi \equiv y_0$ is a single point in $\wb{\Theta}$. The interesting
  aspect of the theorem is that $y_0$ is allowed to be on $\partial
  \Theta$, since the drift term is singular at $\partial \Theta$. If
  we assume that $y_0 \in \Theta$, then existence of a unique strong
  solution up to the time $\tau_A \wedge \tau_B$ follows from standard
  arguments, since $K(y)$ is Lipschitz continuous in the interior of
  $\Theta$.  That is, if $y_0 \in \Theta$, there is a unique,
  continuous $\wh{\mathcal{F}}_t$-adapted process $Y_t$ which
  satisfies
  \begin{equation}\label{YSDE1}
    Y_t = y_0 + \int_0^{t \wedge (\tau_A \wedge \tau_B)}
    K(Y_s) \ud s + \int_0^{t \wedge (\tau_A \wedge \tau_B)} \sqrt{2}\,
    \sigma(Y_s) \ud\wh{W}_s, \quad t \geq 0.  
  \end{equation}
  Moreover, if $y_0 \in \Theta$, then we must have $\tau_A > \tau_B >
  0$ almost surely. This follows from an argument similar to the proof
  of \cite{KaSh:91}*{Proposition 3.3.22, p.~161}. Specifically, we
  consider the process $z_t = 1/q(Y_t) \in \Rm$, which satisfies
  \[
  z_{t \wedge \tau} = z_0 - \int_0^{t \wedge \tau} \sqrt{2} (z_s)^2
  \nabla q \cdot \sigma \ud\wh{W}_s
  \]
  where $\tau = \tau_B \wedge \tau_\epsilon$ with $\tau_\epsilon =
  \inf \{ t > 0 \mid q(Y_t) = \epsilon \}$.  Since $\tau < \infty$
  with probability one, we have
  \[
  z_0 = \wh{\expE}[z_{t \wedge \tau}] = \frac{1}{q(\epsilon)}
  \mathbb{Q}( \tau_\epsilon < \tau_B) + \mathbb{Q}(\tau_\epsilon >
  \tau_B).
  \]
  Hence $\mathbb{Q}( \tau_\epsilon < \tau_B) \leq q(\epsilon)(z_0-1)$.
  So, $\mathbb{Q}(\tau_A < \tau_B) \leq \lim_{\epsilon \to 0}
  \mathbb{Q}( \tau_\epsilon < \tau_B)=0$.

  \bigskip

  Now suppose $y_0 \in \partial A$. In consideration of the comments
  above, it suffices to prove the desired result with $\tau_B$
  replaced by $\tau_r$, the first hitting time to $\partial B_r(y_0)
  \cap \Theta$, where $B_r(y_0)$ is a ball of radius $r > 0$ centered
  at $y_0$. Thus, we want to prove existence and pathwise uniqueness
  of a continuous $\wh{\mathcal{F}}_t$-adapted process $Y_t:[0,\infty)
  \to \bar \Theta$ satisfying
  \begin{equation}\label{Yteqn} 
    Y_t = y_0 + \int_0^{t \wedge \tau_r} K(Y_s) \ud s 
    + \int_0^{t \wedge \tau_r} \sqrt{2}\, \sigma(Y_s) \ud\wh{W}_s, 
  \end{equation}
  where
  \[
  \tau_r = \inf \left \{ t \geq 0 \mid Y_t \in \partial B_r(y_0) \cap
    \Theta \, \right \}.
  \]
  It will be very useful to define a new coordinate system in the set
  $B_r^+(y_0) = B_r(y_0) \cap \Theta$ and to consider the problem in
  these new coordinates. For $r > 0$ small enough we can define a
  $C^3$ map $(h^{(1)}(y),\dots,h^{(d-1)}(y),q(y)): \overline{B_r^+(y_0)} \to
  \Rm^{d-1} \times [0,\infty)$, such that the scalar functions
  $h^{(i)}(y): \overline{B_r^+(y_0)} \to \Rm$ satisfy
  \begin{equation}\label{hqOrthog} 
    \avg{\nabla h^{(i)}(y), a(y) \nabla q(y)} = 0, \quad \forall\;y \in
    \overline{B_r^+(y_0)}, \qquad i = 1,\dots,d-1. 
  \end{equation}
  Furthermore, the map may be constructed so that it is invertible on
  its range and that the inverse is $C^3$. The existence of such a map
  follows from the regularity of $\partial A$, the regularity of $q$,
  and the fact that $\avg{\wh{n}, a \nabla q} \neq 0$ on $\partial A$
  by \eqref{hopf}.

  For two initial points $x_1, x_2 \in \Theta$, let $Y^{x_1}_t$ and
  $Y^{x_2}_t$ denote the unique solutions to \eqref{YSDE1} with
  $Y^{x_1}_0 = x_1$ and $Y^{x_2}_0 = x_2$ respectively. That is, 
  \begin{equation}\label{YxSDE}
    Y^{x}_t = x + \int_0^{t \wedge \tau^x_{B}}  K(Y^x_s) \ud s 
    + \int_0^{t \wedge \tau^x_{B}}
    \sqrt{2}\, \sigma(Y^x_s)\ud\wh{W}_s, \quad t \geq 0,
  \end{equation}
  where $\tau_B^x$ is the first hitting time of $Y^x_t$ to $\partial
  B$.  Changing to the coordinate system defined by
  $(h^{(1)}(y),\dots,h^{(d-1)}(y),q(y))$, we denote
  \[
  (h_{1,t},q_{1,t}) = (h(Y^{x_1}_t), q(Y^{x_1}_t)) \quad \text{and}
  \quad (h_{2,t},q_{2,t}) = (h(Y^{x_2}_t), q(Y^{x_2}_t)).
  \]
  Let $\tau_r^{1}$ and $\tau_r^{2}$ denote the first hitting times of
  $Y^{x_1}_t$ and $Y^{x_2}_t$ to the set $\partial B_r(y_0) \cap
  \Theta$. The processes $(h_{1,t},q_{1,t})$ and $(h_{2,t},q_{2,t})$
  are well-defined up to the times $\tau^1_r$ and $\tau^2_r$,
  respectively.

  \medskip

  We can control the difference between $(h_{1,t},q_{1,t})$ and
  $(h_{2,t},q_{2,t})$:
  \begin{lem}\label{lem:qhx1x2}
    There is a constant $C$ such that for all $x_1, x_2 \in B_{r/2}(y_0)
    \cap \Theta$
    \begin{align*}
      & \wh{\expE}\left[\max_{t \in [0,T] }(q_{1,t\wedge \tau} -
        q_{2,t \wedge \tau})^2 \right] \leq C |x_1 - x_2|^{1/2},  \\
      \intertext{and} & \wh{\expE}\left[\max_{t \in [0,T] }
        \abs{h_{1,t\wedge \tau} - h_{2,t \wedge \tau}}^2 \right] \leq
      C |x_1 - x_2|,
    \end{align*}
    where $\tau = \tau^{1}_r \wedge \tau^{2}_r$.
  \end{lem}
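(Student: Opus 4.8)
The plan is to apply It\^o's formula to the coordinate processes $q_{i,t}=q(Y^{x_i}_t)$ and $h_{i,t}=h(Y^{x_i}_t)$ and to exploit the structure that the change of variables was designed to create. Write $\alpha(y)=\avg{\nabla q(y),a(y)\nabla q(y)}$, so that $\alpha(y)\ge\lambda\abs{\nabla q(y)}^2$, which is bounded below by a positive constant $\alpha_{\min}$ on $\overline{B_r^+(y_0)}$ because of \eqref{hopf}. Two structural facts drive the argument. Since $Lq=0$ in $\Theta$, the generator $L^q$ of $Y$ acts on $q$ by the purely singular, \emph{dissipative} drift $L^q q = 2\alpha/q$, and the martingale part of $q(Y_t)$ has quadratic variation $2\alpha(Y_t)\,\rd t$. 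More importantly, the orthogonality relation \eqref{hqOrthog} annihilates the singular part of the drift of each $h^{(i)}$:
\[
  L^q h^{(i)} = L h^{(i)} + \frac{2}{q}\avg{\nabla h^{(i)},\,a\nabla q} = L h^{(i)},
\]
which is smooth, hence Lipschitz, up to $\partial A$. Because the coordinate map and its inverse are $C^3$, they are bi-Lipschitz on $\overline{B_r^+(y_0)}$, so Lipschitz bounds in $y$ for $a,\sigma,b,\nabla q,\alpha$ and $Lh^{(i)}$ transfer to the $(h,q)$ variables, and $\abs{Y^{x_1}_t-Y^{x_2}_t}\le C(\abs{H_t}+\abs{D_t})$, where $H_t=h_{1,t}-h_{2,t}$ and $D_t=q_{1,t}-q_{2,t}$. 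I will also use that $q$ is uniformly small on $B_r^+(y_0)$, with $\max_{\overline{B_r^+(y_0)}}q\to0$ as $r\to0$.

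For the $h$-estimate I apply It\^o to $\abs{H_t}^2$. Since both the drift $Lh^{(i)}$ and the diffusion $\sqrt2\,\sigma^{\TT}\nabla h^{(i)}$ are Lipschitz on $\overline{B_r^+(y_0)}$, the drift and diffusion differences are bounded by $C(\abs{H_t}+\abs{D_t})$, giving
\[
  \rd\abs{H_t}^2 \le C\bigl(\abs{H_t}^2+\abs{D_t}^2\bigr)\,\rd t + \rd M'_t,
\]
with $M'$ a martingale; no singular term appears. Taking $\max_{t\le T\wedge\tau}$, then $\wh{\expE}$, and absorbing $M'$ with the Burkholder--Davis--Gundy (BDG) inequality yields a Gr\"onwall inequality for $\wh{\expE}[\max_{s\le t\wedge\tau}\abs{H_s}^2]$, coupled to the same quantity for $D$.

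The heart of the matter is the $q$-estimate. Applying It\^o to $\abs{D_t}^2$, the singular drift contributes
\[
  2D_t\Bigl(\tfrac{2\alpha_1}{q_{1,t}}-\tfrac{2\alpha_2}{q_{2,t}}\Bigr)
  = -\frac{4\alpha_1 D_t^2}{q_{1,t}q_{2,t}} + \frac{4(\alpha_1-\alpha_2)D_t}{q_{2,t}},
  \qquad \alpha_i=\alpha(Y^{x_i}_t),
\]
the first term being nonpositive. On the event $\{q_{1,t}>q_{2,t}\}$ I would instead use the symmetric identity carrying $q_{1,t}$ in the denominators, so that the error term always has $\max(q_{1,t},q_{2,t})$ below. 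Since $\abs{D_t}\le\max(q_{1,t},q_{2,t})$, one has $\abs{D_t}/q_{2,t}\le1$ on $\{q_{2,t}\ge q_{1,t}\}$. Separating the dependence of $\alpha$ on the two groups of coordinates, $\abs{\alpha_1-\alpha_2}\le C\abs{D_t}+C\abs{H_t}$. The $\abs{D_t}$-part of the error is then $\le C\abs{D_t}^2/q_{2,t}=Cq_{1,t}\,\abs{D_t}^2/(q_{1,t}q_{2,t})$, which is absorbed by the dissipative term once $r$ is small enough that $Cq\le\alpha_{\min}$ on $B_r^+(y_0)$; the $\abs{H_t}$-part, $\le C\abs{H_t}\abs{D_t}/q_{2,t}$, is handled by Young's inequality against the dissipative term, leaving a harmless $C\abs{H_t}^2$. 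Together with the quadratic-variation term $\abs{\sqrt2(\sigma^{\TT}\nabla q)(Y^{x_1}_t)-\sqrt2(\sigma^{\TT}\nabla q)(Y^{x_2}_t)}^2\le C(\abs{H_t}^2+\abs{D_t}^2)$, this gives the analogous quadratic bound
\[
  \rd\abs{D_t}^2 \le C\bigl(\abs{H_t}^2+\abs{D_t}^2\bigr)\,\rd t + \rd M_t .
\]

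The proof concludes by feeding the two differential inequalities into a coupled Gr\"onwall argument, absorbing the martingales by BDG and using $\abs{H_0},\abs{D_0}\le C\abs{x_1-x_2}$ from the Lipschitz continuity of the coordinate maps. This bounds $\wh{\expE}[\max_{t\le T\wedge\tau}\abs{H_t}^2]$ and $\wh{\expE}[\max_{t\le T\wedge\tau}\abs{D_t}^2]$ by $C\abs{x_1-x_2}^2$, which (for $\abs{x_1-x_2}\le1$) implies both estimates claimed in the lemma, the stated exponents $\tfrac12$ and $1$ being more than adequate for the uniqueness argument to follow. I expect the main obstacle to be the rigorous, uniform control of the singular error term $4(\alpha_1-\alpha_2)D_t/q_{2,t}$ as $Y^{x_1},Y^{x_2}$ approach $\partial A$: one must combine the good sign of the dissipative term, the geometric bound $\abs{D_t}\le\max_i q_{i,t}$, and the transverse variation of $\alpha$, and check that the absorptions hold up to $\tau$ uniformly as $x_1,x_2\to y_0$. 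A secondary point is to justify the It\^o calculus for $q(Y_t)$ and $h(Y_t)$ despite the singular drift of $Y$; this is legitimate on the stopped processes because $Y_t\notin\overline A$ for all $t>0$.
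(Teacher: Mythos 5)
Your proof is correct, and although it shares the paper's skeleton --- pass to the $(h,q)$ coordinates so that \eqref{hqOrthog} removes the singular drift from the $h$-equations, apply It\^o's formula to $|h_1-h_2|^2$ and $(q_1-q_2)^2$, then close with Gronwall and Burkholder--Davis--Gundy --- it handles the crux, the singular drift difference in the $q$-equations, by a genuinely different argument. Both proofs begin from the same algebraic decomposition (writing $D=q_1-q_2$, $H = h_1 - h_2$)
\begin{equation*}
  2D\Bigl(\frac{|g_1|^2}{q_1}-\frac{|g_2|^2}{q_2}\Bigr)
  = -\frac{2D^2|g_1|^2}{q_1q_2}
  + \frac{2D\bigl(|g_1|^2-|g_2|^2\bigr)}{q_2},
\end{equation*}
but the paper proves the key pointwise bound \eqref{qdifObs} by a two-case comparison of $q_1\bigl\lvert |g_1|^2-|g_2|^2\bigr\rvert$ with $(q_2-q_1)|g_1|^2$, discarding the negative term in one case and invoking the Hopf lower bound \eqref{hopfglower} in the other; you instead retain the negative (dissipative) term and absorb the error into it, using $|D|\le\max(q_1,q_2)$, the Lipschitz continuity of $\avg{\nabla q, a\nabla q}$ in the new coordinates, Young's inequality with the weight $q_1/q_2\le 1$, and the extra smallness condition that $C\sup_{B_r^+(y_0)}q$ lie below the lower bound for $\avg{\nabla q,a\nabla q}$ --- which forces $r$ to be taken small, a harmless requirement since $r$ is at our disposal and \eqref{hopfglower} already requires it. Your absorptions do check out: in the case $q_1\le q_2$ the $D$-part of the error equals $Cq_1D^2/(q_1q_2)$, dominated by a fraction of the dissipative term once $Cq_1\le \alpha_{\min}$, and the Young step leaves only $C|H|^2$, so your drift contribution is bounded by $C|H|^2$, slightly sharper than \eqref{qdifObs}. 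As for what each route buys: the paper's case analysis needs no smallness of $q$ on the ball, only \eqref{hopfglower}; your version exhibits the dissipative mechanism explicitly and, because you keep the Lipschitz bound $|g_1-g_2|^2\le C(D^2+H^2)$ inside the BDG step and absorb $\wh{\expE}\bigl[\max_t D_t^2\bigr]$ rather than estimating the quadratic variation by mere boundedness of $g_1-g_2$, you obtain $O(|x_1-x_2|^2)$ for both maxima --- strictly stronger than the stated exponents $1/2$ and $1$, the paper's loss of a square root in the $q$-estimate coming precisely from the cruder BDG step via \eqref{gronwall1}.
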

  The proof of Lemma~\ref{lem:qhx1x2} will be postponed. One immediate
  corollary is the following.
  \begin{cor}\label{cor:chebyY12}
    There is a constant $C$ such that
    for all $x_1, x_2 \in B_{r/2}(y_0) \cap \Theta$ 
    \begin{equation}\label{chebyY12} 
      \mathbb{Q}\biggl( \max_{0 \leq t \leq (T \wedge \tau)} |Y^{x_1}_t
      - Y^{x_2}_t| > \alpha \biggr) \leq C \alpha^{-2} |x_{1} -
      x_{2}|^{1/2}, \quad \forall \; \alpha > 0, 
    \end{equation}
    where $\tau = \tau^{1}_r \wedge \tau^{2}_r$.
  \end{cor}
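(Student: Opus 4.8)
The plan is to deduce Corollary~\ref{cor:chebyY12} from Lemma~\ref{lem:qhx1x2} by a Chebyshev-type argument, once we transfer the estimates from the curvilinear coordinates $(h,q)$ back to the original Euclidean coordinates. First I would record the key geometric fact already available: by construction the map $y \mapsto (h^{(1)}(y),\dots,h^{(d-1)}(y),q(y))$ is invertible on its range with $C^3$ inverse, and $\overline{B_r^+(y_0)}$ is compact. Hence the inverse map is Lipschitz on the relevant set, say with constant $L$. Since $\tau = \tau^1_r \wedge \tau^2_r$, both processes $Y^{x_1}_t$ and $Y^{x_2}_t$ remain inside the coordinate chart for $t \leq \tau$, so for every such $t$ we have the pointwise bound
\[
  \abs{Y^{x_1}_t - Y^{x_2}_t} \leq L\bigl( \abs{h_{1,t} - h_{2,t}} + \abs{q_{1,t} - q_{2,t}} \bigr).
\]

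Next I would apply Chebyshev's inequality together with the elementary bound $(a+b)^2 \leq 2a^2 + 2b^2$ to obtain
\[
  \mathbb{Q}\biggl( \max_{0 \leq t \leq T \wedge \tau} \abs{Y^{x_1}_t - Y^{x_2}_t} > \alpha \biggr)
  \leq \frac{1}{\alpha^2} \wh{\expE}\Bigl[ \max_{0 \leq t \leq T \wedge \tau} \abs{Y^{x_1}_t - Y^{x_2}_t}^2 \Bigr]
  \leq \frac{2L^2}{\alpha^2}\Bigl( \wh{\expE}\bigl[\max_t \abs{h_{1,t} - h_{2,t}}^2\bigr] + \wh{\expE}\bigl[\max_t \abs{q_{1,t} - q_{2,t}}^2\bigr] \Bigr),
\]
where all maxima are taken over $[0, T\wedge\tau]$. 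Now Lemma~\ref{lem:qhx1x2} bounds the $q$-term by $C\abs{x_1 - x_2}^{1/2}$ and the $h$-term by $C\abs{x_1 - x_2}$.

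Finally I would absorb the $h$-contribution into the $q$-contribution: since $x_1, x_2 \in B_{r/2}(y_0)$, the separation $\abs{x_1 - x_2}$ is bounded above (by $r$), and therefore $\abs{x_1 - x_2} \leq r^{1/2}\,\abs{x_1 - x_2}^{1/2}$. Consequently both terms are dominated by a constant multiple of $\abs{x_1 - x_2}^{1/2}$, which yields the claimed estimate after renaming constants. There is essentially no hard step here: the corollary is pure bookkeeping on top of Lemma~\ref{lem:qhx1x2}, and the only points requiring a word of care are the bi-Lipschitz property of the coordinate change (which follows from $C^3$ regularity on a compact set) and the observation that the two processes genuinely stay in the common chart up to the stopping time $\tau = \tau^1_r \wedge \tau^2_r$. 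The substantive work is entirely contained in Lemma~\ref{lem:qhx1x2}, whose proof is deferred.
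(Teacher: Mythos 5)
Your proof is correct and follows essentially the same route as the paper's: a bi-Lipschitz bound for the coordinate change giving $\abs{Y^{x_1}_t - Y^{x_2}_t} \leq C\bigl(\abs{h_{1,t}-h_{2,t}} + \abs{q_{1,t}-q_{2,t}}\bigr)$ on $[0,\tau]$, followed by Chebyshev's inequality and Lemma~\ref{lem:qhx1x2}. The only detail you add beyond the paper's terse argument is the explicit absorption of the $\abs{x_1-x_2}$ bound on the $h$-term into $\abs{x_1-x_2}^{1/2}$ via the boundedness of $B_{r/2}(y_0)$, which the paper leaves implicit.
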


  \begin{proof}
    On the closed set $\{ z \in \Rm^{d} \mid z = (h(y),q(y)), \ y \in
    \overline{B_r^+(y_0)} \}$, the map $y \mapsto (h(y),q(y))$ is
    invertible with a continuously differentiable inverse. Hence there
    is a constant $C$, depending only on the map $y \mapsto
    (h(y),q(y))$ such that
    \[
    |Y^{x_1}_t - Y^{x_2}_t| \leq C \left(|h_{1,t} -
      h_{2,t}| + |q_{1,t} - q_{2,t}|\right), \quad \forall\; t \in [0,\tau].
    \]
    By combining this bound with Chebychev's inequality and
    Lemma~\ref{lem:qhx1x2} we obtain \eqref{chebyY12}.
  \end{proof}

  \medskip 

  Now suppose $y_0 \in \partial A$. Let $\{x_n\}_{n=1}^\infty \subset
  \Theta$ be a given sequence such that $x_n \to y_0$ as $n \to
  \infty$. For each $n$, define $Y^{x_n}_t$ by \eqref{YxSDE}, and let
  $\tau^n_r$ denote the first hitting time of $Y^{x_n}_t$ to $\partial
  B_r(y_0) \cap \Theta$.  We may choose the points $x_n$ so that $|x_n
  - y_0| \leq 25^{-n}$. Define $\wh \tau^n = \tau^{n+1}_r \wedge
  \tau_r^{n}$. Applying Corollary \ref{cor:chebyY12}, we conclude 
  \begin{equation*}
    \mathbb{Q}\biggl( \max_{0 \leq t \leq (T \wedge \wh\tau^n)}
    |Y^{x_{n+1}}_t - Y^{x_n}_t| > 2^{-n} \biggr) \leq C 2^{2n} 5^{-n}.
  \end{equation*}
  Therefore, by the Borel-Cantelli lemma, the series 
  \begin{equation}
    \sum_{n
      =1}^\infty \max_{0 \leq t \leq (T \wedge \wh \tau^n)}
    |Y^{x_{n+1}}_t - Y^{x_n}_t| < \infty \label{seriesconv} 
  \end{equation}
  with probability one. Let us define
  \begin{equation}\label{taurhopos} 
    \tau_r = \liminf_{n \to
      \infty} \tau_r^{n} = \liminf_{n \to \infty} \wh
    \tau^n. 
  \end{equation}
  We will prove that $\tau_r$ is positive:
  
  \medskip
  
  \begin{lem}\label{lem:taupos} For all $r > 0$ sufficiently small,
    $\mathbb{Q}(\tau_r > 0 ) = 1$.
  \end{lem}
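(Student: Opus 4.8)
The plan is to carry out the analysis in the rectifying coordinates $(h^{(1)},\dots,h^{(d-1)},q)$ on $B_r^+(y_0)=B_r(y_0)\cap\Theta$ constructed above, the point being that in these coordinates the singular part of the drift decouples into the single variable $q$. Writing $g(y):=\avg{\nabla q(y),a(y)\nabla q(y)}$ and recalling that $Y^{x_n}$ has generator $L^q$ from \eqref{Lconditioned}, Itô's formula together with $Lq=0$ on $\Theta$ shows that $q_t=q(Y^{x_n}_t)$ obeys, up to $\tau_r^n$,
\[
\rd q_t = \frac{2\,g(Y^{x_n}_t)}{q_t}\ud t + \sqrt 2\,\nabla q\cdot\sigma(Y^{x_n}_t)\ud\wh W_t ,
\]
a Bessel-type equation whose drift is positive and whose quadratic variation rate $2g$ is pinched between $2c_1$ and $2c_2$ for $r$ small, by \eqref{hopf}. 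For the transverse coordinates the orthogonality \eqref{hqOrthog} annihilates the singular term, since $\tfrac{2}{q}\avg{\nabla h^{(i)},a\nabla q}=0$, so each $h^{(i)}_t=h^{(i)}(Y^{x_n}_t)$ solves an equation with bounded drift $Lh^{(i)}$ and bounded diffusion. Thus the only surviving singularity is the repulsive one in $q$.

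I would then control the exit time by a single barrier adapted to these coordinates,
\[
u(y)=\frac{1}{\delta^2}\Bigl(\textstyle\sum_i \bigl(h^{(i)}(y)-h^{(i)}(y_0)\bigr)^2 + q(y)^2\Bigr),
\]
and check three things. It vanishes at $y_0$; since the coordinate map is bi-Lipschitz on $\overline{B_r^+(y_0)}$ we have $u\ge 1$ on the exit set $\Gamma:=\partial B_r(y_0)\cap\Theta$ once $\delta$ is a small enough multiple of $r$; and, crucially, $L^q u$ is bounded on $B_r^+(y_0)$ despite the singular drift. The last point follows from the identities $L^q(q^2)=\tfrac1q L(q^3)=6g\le 6c_2$ (using $Lq=0$) and $L^q\bigl((h^{(i)}-h^{(i)}(y_0))^2\bigr)=L\bigl((h^{(i)}-h^{(i)}(y_0))^2\bigr)$, the latter bounded because the singular contribution cancels again by \eqref{hqOrthog}. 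Writing $\Lambda_0=\Lambda_0(r)$ for a bound on $|L^q u|$ and applying optional stopping to the martingale $u(Y^{x_n}_{t\wedge\tau_r^n})-\int_0^{t\wedge\tau_r^n}L^q u(Y^{x_n}_s)\ud s$, with $u\ge 0$ and $u\ge 1$ on $\Gamma$, gives the uniform estimate
\[
\mathbb{Q}(\tau_r^n\le t)\le \wh{\expE}\bigl[u(Y^{x_n}_{t\wedge\tau_r^n})\bigr]\le u(x_n)+\Lambda_0\,t,\qquad u(x_n)\to 0 .
\]

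The step I expect to be the genuine obstacle is upgrading this family of per-$n$ bounds to a statement about $\tau_r=\liminf_n\tau_r^n$. The additive term $\Lambda_0 t$ is not an artifact: the $q$-coordinate climbs to a fixed level in expected time of order $\delta^2$, independently of how close $x_n$ lies to $\partial A$, so $\mathbb{Q}(\tau_r^n\le t)$ does not tend to $0$ in $n$ and is not summable, and Borel–Cantelli applied to $\{\tau_r^n\le t\}$ at fixed $t$ must fail. Instead I would invoke the almost sure uniform convergence $Y^{x_n}\to Y$ on $[0,\tau_r)$ coming from \eqref{seriesconv}. The stopped values $V_n:=u(Y^{x_n}_{t\wedge\tau_r^n})$ are uniformly bounded by $\max_{\overline{B_r^+(y_0)}}u$, so Fatou yields $\wh{\expE}[\liminf_n V_n]\le\liminf_n\wh{\expE}[V_n]\le\Lambda_0 t$. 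What remains, and what carries the real weight, is to prove $\liminf_n V_n\ge 1$ on $\{\tau_r<t\}$; this is equivalent to the continuity of the exit times, $\tau_r^n\to\tau_r$, and is exactly where the uniform ellipticity of $a$ is needed, to ensure that the limit path crosses $\Gamma$ transversally rather than grazing it, so that uniformly nearby paths exit at nearby times. Granting this, $\mathbb{Q}(\tau_r<t)\le\Lambda_0 t$, and letting $t\downarrow 0$ gives $\mathbb{Q}(\tau_r=0)=0$, i.e.\ $\mathbb{Q}(\tau_r>0)=1$. The coordinate change has already disposed of the singular drift; the transversality of the exit, forced by non-degeneracy of the diffusion and the Hopf bound \eqref{hopf}, is the heart of the argument.
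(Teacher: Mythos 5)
Your barrier construction and the per-$n$ estimate are correct: the identities $L^q(q^2)=6\avg{\nabla q, a\nabla q}$ and $L^q\bigl((h^{(i)}-h^{(i)}(y_0))^2\bigr)=L\bigl((h^{(i)}-h^{(i)}(y_0))^2\bigr)$ do hold, and optional stopping gives $\mathbb{Q}(\tau_r^n\le t)\le u(x_n)+\Lambda_0 t$. But the argument has a genuine gap at exactly the step you flag and then assume: that $\liminf_n u\bigl(Y^{x_n}_{t\wedge\tau_r^n}\bigr)\ge 1$ on $\{\tau_r<t\}$, equivalently that the exit times converge, $\tau_r^n\to\tau_r$. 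Since $\tau_r$ is only a $\liminf$, on $\{\tau_r<t\}$ you get $V_{n_j}\ge 1$ only along the subsequence realizing the $\liminf$, i.e.\ $\limsup_n V_n\ge 1$; Fatou needs the $\liminf$, and the reverse-Fatou inequality runs the wrong way. To upgrade to the $\liminf$ you must rule out that infinitely many of the other approximating paths graze $\partial B_r(y_0)\cap\Theta$ without crossing and then wander back toward $y_0$ by time $t$. That is a boundary-regularity statement one would normally prove via the strong Markov property and non-degeneracy of the \emph{limit} process --- but at this point in the construction the limit process $Y$ does not yet exist (it is being built in this very proof and is so far defined only on $[0,\tau_r)$), and the approximating processes carry a singular drift near $\partial A$, precisely where the exit set $\partial B_r(y_0)\cap\Theta$ meets $\partial A$. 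So ``transversality forced by uniform ellipticity'' is a missing lemma that is arguably harder than the statement being proved, not a routine appeal.

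For comparison, the paper's proof avoids both the barrier and any exit-time continuity. Suppose $\mathbb{Q}(\tau_r=0)=\epsilon>0$. Using \eqref{seriesconv}, choose $m$ with $25^{-m}\le r/4$ and such that, with probability at least $1-\epsilon/2$, the tail sum $\sum_{n\ge m}\max_{0\le t\le T\wedge\wh\tau^n}|Y^{x_{n+1}}_t-Y^{x_n}_t|$ is below $r/4$. On the intersection of this event with $\{\tau_r=0\}$ (probability at least $\epsilon/2$), take a subsequence with $\tau_r^{n_j}\to 0$; then $|Y^{x_{n_j}}_{\tau_r^{n_j}}-y_0|\le |Y^{x_{n_j}}_{\tau_r^{n_j}}-Y^{x_m}_{\tau_r^{n_j}}|+|Y^{x_m}_{\tau_r^{n_j}}-y_0|$, and the second term tends to $|x_m-y_0|\le r/4$ by continuity of the single path $Y^{x_m}$ at $t=0$, giving $\liminf_j|Y^{x_{n_j}}_{\tau_r^{n_j}}-y_0|\le r/2$, which contradicts $|Y^{x_{n_j}}_{\tau_r^{n_j}}-y_0|=r$. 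The only inputs are the almost-sure summability \eqref{seriesconv} and path continuity at time zero; no generator computation, no transversality, and no convergence of exit times is ever needed. You should either adopt this soft contradiction argument or supply a genuine proof of the exit-time continuity your route requires.
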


  \medskip

  In view of \eqref{seriesconv} and Lemma~\ref{lem:taupos}, we
  conclude that there must be a continuous process $Y_t$ such that,
  with probability one,
  \[
  Y^{x_n}_t \to Y_t
  \]
  uniformly on compact subsets of $[0,\tau_r)$, as $n \to \infty$. Let
  us define 
  \begin{equation} 
    \wb \tau_{r/2} = \inf \{ t \geq 0 \mid Y_t
    \in \partial B_{r/2}(y_0) \cap \Theta \}. \label{bartaur2def}
  \end{equation}

  \medskip
  \begin{lem}\label{taur2bound} 
    For all $r > 0$ sufficiently small, $\mathbb{Q}(\wb \tau_{r/2} \in
    (0,\tau_r)) = 1$, and $\wb \tau_{r/2}$ is stopping time with
    respect to $\wh{\mathcal{F}}_t$.
  \end{lem}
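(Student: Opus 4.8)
The statement bundles three assertions: $\wb\tau_{r/2}>0$, $\wb\tau_{r/2}<\tau_r$, and that $\wb\tau_{r/2}$ is an $\wh{\mathcal{F}}_t$-stopping time. The first is immediate: $Y$ is continuous with $Y_0 = y_0$, and $\mathrm{dist}(y_0,\partial B_{r/2}(y_0)) = r/2 > 0$, so $Y_t \in B_{r/2}(y_0)$ for all sufficiently small $t$ and hence $\wb\tau_{r/2} > 0$. Moreover, shrinking $r$ so that $\overline{B_r(y_0)} \cap \wb B = \varnothing$, and recalling that the approximants avoid $\wb A$ (by the $z_t = 1/q(Y_t)$ estimate given above for interior starting points, a property inherited by the limit and consistent with the conclusion of Theorem~\ref{theo:uniqueYAB}), we have $Y_t \in \Theta$ for every $t \in (0,\tau_r)$. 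Thus on $(0,\tau_r)$ the constraint ``$\cap\,\Theta$'' in the definition of $\wb\tau_{r/2}$ is inactive, and $\wb\tau_{r/2}$ agrees with the first exit time of $Y$ from the open ball $B_{r/2}(y_0)$.

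The substance is the strict inequality $\wb\tau_{r/2} < \tau_r$, which I would reduce to the claim
\[
\sup_{t \in [0,\tau_r)} |Y_t - y_0| > \tfrac r2 \quad \text{a.s.}
\]
Once this holds, continuity of $Y$ together with $|Y_0 - y_0| = 0$ forces $s \mapsto |Y_s - y_0|$ to cross the level $r/2$ at some interior time $s^\ast < \tau_r$, at a point $Y_{s^\ast} \in \partial B_{r/2}(y_0) \cap \Theta$, whence $\wb\tau_{r/2} \le s^\ast < \tau_r$. To prove the claim, fix an intermediate radius $\rho \in (r/2,r)$ and let $\sigma^n_\rho$ be the first exit time of $Y^{x_n}$ from $B_\rho(y_0) \cap \Theta$; since radius $\rho$ is reached before radius $r$, one has $\sigma^n_\rho < \tau^n_r$. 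The first key estimate is a bound on $\wh{\expE}[\sigma^n_\rho]$ uniform in $n$. The obstacle here is the singular drift $\tfrac{2a\nabla q}{q}$, which blows up along $\partial A$; I would tame it by working in the coordinates $(h,q)$ of \eqref{hqOrthog}. Because $\avg{\nabla h^{(i)}, a\nabla q} = 0$, the singular term drops out of $L^q$ acting on functions of $h$ alone, so on such functions $L^q$ is uniformly elliptic with bounded coefficients. A standard quadratic barrier $w$ in the $h$-variables then satisfies $L^q w \le -1$ with $0 \le w \le M$, and since the coordinate map is bi-Lipschitz, the exit from $B_\rho(y_0) \cap \Theta$ occurs no later than the exit of the $h$-component from a corresponding box; Dynkin's formula gives $\sup_n \wh{\expE}[\sigma^n_\rho] \le M < \infty$, and in particular $\sigma^n_\rho < \infty$ almost surely.

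With this bound in hand I would argue by contradiction. If $\liminf_n \sigma^n_\rho < \tau_r$, then passing to the limit along a subsequence realizing the liminf and using the uniform convergence $Y^{x_n} \to Y$ on compact subsets of $[0,\tau_r)$ produces a time $s < \tau_r$ with $|Y_s - y_0| = \rho > r/2$, which is the claim. It remains to exclude $\liminf_n \sigma^n_\rho \ge \tau_r$ on a set of positive probability. On such a set, since $\sigma^n_\rho \le \tau^n_r$ and $\tau_r = \liminf_n \tau^n_r$, both $\sigma^n_\rho$ and $\tau^n_r$ converge to $\tau_r$ along a subsequence, so $\tau^n_r - \sigma^n_\rho \to 0$ while $Y^{x_n}$ moves from radius $\rho$ to radius $r$, a displacement of at least $r - \rho$. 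This is ruled out by a modulus-of-continuity estimate for the family $\{Y^{x_n}\}$ that is uniform in $n$: the processes cannot traverse a fixed macroscopic distance in vanishing time. I expect this uniform modulus of continuity, which must be valid up to the singular boundary $\partial A$, to be the main technical obstacle; I would obtain it from moment bounds on the increments of $(h(Y^{x_n}),q(Y^{x_n}))$ of the same type used in Lemma~\ref{lem:qhx1x2}, once again exploiting that in the $(h,q)$ coordinates the singular drift is confined to the $q$-component.

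Finally, the stopping-time property is routine. By the reduction above, on the almost-sure event $\{\wb\tau_{r/2} < \tau_r\}$ the time $\wb\tau_{r/2}$ equals the first hitting time of the closed sphere $\partial B_{r/2}(y_0)$ by the continuous process $Y$. Since each $Y^{x_n}$ is $\wh{\mathcal{F}}_t$-adapted and $Y$ is their almost-sure locally uniform limit, $Y$ is $\wh{\mathcal{F}}_t$-adapted; the hitting time of a closed set by a continuous adapted process is a stopping time for the right-continuous augmented filtration $\wh{\mathcal{F}}_t$, which completes the proof.
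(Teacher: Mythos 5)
Your route is genuinely different from the paper's, but as written it has two gaps. The first is a circularity: you assert that $Y_t \in \Theta$ for all $t \in (0,\tau_r)$ because the approximants $Y^{x_n}$ avoid $\wb{A}$ and this is ``inherited by the limit.'' That inference is invalid: a locally uniform limit of paths lying in the open set $\Theta$ is only guaranteed to lie in $\wb{\Theta}$, and may well touch $\partial A$. Indeed, the statement $q(Y_t)>0$ for all $t>0$ is precisely the delicate content of Theorem~\ref{theo:uniqueYAB}; in the paper it is established only \emph{after} Lemma~\ref{taur2bound}, for the stopped process $Y_{t \wedge \wb{\tau}_{r/2}}$, via the Fatou argument surrounding \eqref{qintlim} (divergence of $\int |g|^2/q_s \ud s$ on a set of positive measure, then the interior argument restarted from deterministic times $t_n \downarrow 0$). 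So you cannot use it here. Your proof leans on it twice: to identify $\wb{\tau}_{r/2}$ with the exit time from the open ball (so that the crossing point of $\partial B_{r/2}(y_0)$ is automatically in $\Theta$), and in the stopping-time argument (hitting time of a \emph{closed} set). For the latter, at least, you could instead invoke the d\'ebut theorem for the augmented right-continuous filtration $\wh{\mathcal{F}}_t$, which handles the Borel set $\partial B_{r/2}(y_0)\cap\Theta$ directly once $Y$ is known to be adapted.

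The second gap sits at the crux of your argument. Your contradiction step rests on a uniform-in-$n$ modulus of continuity for $\{Y^{x_n}\}$ valid up to the singular boundary, which you flag as the main obstacle but do not prove, and the source you point to cannot supply it: the estimates of Lemma~\ref{lem:qhx1x2} compare \emph{two solutions at the same time}, exploiting the sign cancellation \eqref{qdifObs}; they say nothing about time increments of a single solution. A direct moment bound on $q_{t}-q_{s}$ fails because the drift $|g|^2/q$ in \eqref{newcoord1b} is unbounded near $q=0$ --- and it is exactly the $q$-component, not $h$, whose modulus is in question, so ``the singular drift is confined to the $q$-component'' does not help. What would rescue the step is an idea you do not state: apply It\^o's formula to $q_t^2$, whose drift $3|g|^2$ and diffusion coefficient $2 q g$ are bounded on the chart (the Bessel-type transformation \eqref{Z1d} from the introduction), and then use $|q_t - q_s| \le |q_t^2 - q_s^2|^{1/2}$ to get H\"older bounds uniform in $n$. (Your barrier bound on $\wh{\expE}[\sigma^n_\rho]$ is sound, though its real role is only to rule out $\liminf_n \sigma^n_\rho = \infty$.) By contrast, the paper proves the key inequality \eqref{limsupYttaur} softly, with no new estimates at all: it splits according to whether $\tau_r = \liminf_n \tau^n_r$ is realized by times $\tau^{n_j}_r < \tau_r$ increasing to $\tau_r$, or whether $\tau^n_r \ge \tau_r$ eventually, and in either case compares $Y$ at those times with $Y^{x_n}_{\tau^n_r} \in \partial B_r(y_0)$ using only the Borel--Cantelli series bound \eqref{seriesconv}. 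You should either adopt that soft dichotomy or supply the missing increment estimate; as it stands, the central step of your proof is open.
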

  \medskip

  We will postpone the proof of Lemma \ref{lem:taupos} and Lemma \ref{taur2bound}. Since $\wb \tau_{r/2} < \tau_r$, $Y^{x_n}_t \to Y_t$ uniformly on
  $[0,\wb \tau_{r/2}]$. Let us now replace $Y_t$ by the stopped
  process $Y_{t \wedge \wb \tau_{r/2}}$. Since each $Y^{x_n}_t$ is
  $\wh{\mathcal{F}}_t$-adapted, so is the limit $Y_t$. We claim that
  $Y_t$ satisfies 
  \begin{equation} 
    Y_t = y_0 + \int_0^{t \wedge \wb \tau_{r/2}} K(Y_s)
    \ud s + \int_0^{t \wedge \wb \tau_{r/2}} \sqrt{2}\,
    \sigma(Y_s)\ud\wh{W}_s, \quad t \geq 0. \label{ylimintegral}
  \end{equation}
  Since $Y^{x_n}_t \to Y_t$ uniformly on $[0,\wb \tau_{r/2}]$, we have
  $(q(Y^{x_n}_t), h(Y^{x_n}_t)) \to (q(Y_t),h(Y_t))$ uniformly on
  $[0,\wb \tau_{r/2}]$, and $(q_t,h_t) = (q(Y_t),h(Y_t))$ satisfies
  \begin{equation}
    h_t = h_0 + \int_0^{t \wedge \wb \tau_{r/2}} f(q_s,h_s)
    \ud s + \int_0^{t \wedge \wb \tau_{r/2}} m(q_s,h_s) 
    \ud\wh{W}_s , \label{hlimintegral} 
  \end{equation} 
  and
  \begin{equation}\label{qintlim}
    q_t - \int_0^{t \wedge \wb \tau_{r/2}} g(q_s,h_s) \cdot \ud \wh{W}_s 
    = \lim_{n \to \infty} \int_0^{t \wedge \tau^{n}_r} 
    \frac{|g(q_s^{x_n},h_s^{x_n})|^2}{q_s^{x_n}} \ud s. 
  \end{equation}
  for all $t \in [0,\wb \tau_{r/2}]$, where $(q_t^{x_n},h_t^{x_n}) =
  (q(Y^{x_n}_t), h(Y^{x_n}_t))$. (Recall $q_0 = 0$.) Since $q_s^{x_n}
  > 0$, the last limit can be bounded below using Fatou's lemma:
  \begin{equation}
    q_t  - \int_0^{t \wedge \wb \tau_{r/2}} g(q_s,h_s) \cdot \ud \wh{W}_s \geq 
    \int_0^{t \wedge \wb \tau_{r/2}} \liminf_{n \to \infty}  
    \frac{|g(q_s^{x_n},h_s^{x_n})|^2}{q_s^{x_n}} \ud s = \int_0^{t \wedge \wb \tau_{r/2}} 
    \frac{|g(q_s,h_s)|^2}{q_s} \ud s 
  \end{equation}
  Recall that $|g(q_s,h_2)|^2 \geq C_r > 0$. In particular, with probability one, the random set $H = \{ s \in
  [0,\wb \tau_{r/2}] \mid q_s = 0 \}$ must have zero Lebesgue measure;
  if that were not the case, then we would have
  \[
  - \int_0^{t \wedge\wb \tau_{r/2}} g(q_s,h_s) \cdot \ud \wh{W}_s =
  +\infty,
  \]
  for all $t$ in a set of positive Lebesgue measure, an event which
  happens with zero probability. Therefore, by Fubini's theorem,
  \[
  0 = \wh{\expE} \int_0^T \mathbb{I}_H(s) \ud s = \int_0^T \mathbb{Q}(
  s < \wb \tau_{r/2} \,,\; q_s = 0 ) \ud s
  \]
  which implies that $\mathbb{Q}( s < \wb \tau_{r/2}, \; q_s = 0 ) =
  0$ for almost every $s \geq 0$. Since $\wb \tau_{r/2} > 0$ almost
  surely, this implies that we may choose a deterministic sequence of
  times $t_n \in (0, 1/n]$ such that, almost surely, $q_{t_n} > 0$ for
  $n$ sufficiently large. By then applying the same argument as when
  $y_0 \in \Theta$, we conclude that $q_t > 0$ for all $t >
  t_n$. Hence, $q_t > 0$ for all $t > 0$ must hold with probability
  one.

  Since $q_t$ is continuous, we now know that for any $\epsilon > 0$,
  \[
  \min_{ t > \epsilon } q_t > 0.
  \]
  holds with probability one. In particular,
  \[
  \liminf_{n \to \infty} \min_{ t > \epsilon } q_t^{x_n} > 0,
  \]
  so that
  \[
  \lim_{n \to \infty} \int_\epsilon^{t \wedge \tau^{n}}
  \frac{|g(q_s^{x_n},h_s^{x_n})|^2}{q_s^{x_n}} \ud s =
  \int_\epsilon^{t \wedge \wb \tau_{r/2}} \frac{|g(q_s,h_s)|^2}{q_s}
  \ud s,
  \]
  almost surely. Since $q_t$ is continuous at $t = 0$, we also know
  that
  \[
  \lim_{\epsilon \to 0} \lim_{n \to \infty} \int_0^{t \wedge \tau^{n}
    \wedge \epsilon} \frac{|g(q_s^{x_n},h_s^{x_n})|^2}{q_s^{x_n}} \ud
  s = \lim_{\epsilon \to 0} \left( q_\epsilon - \int_0^{t \wedge \wb
      \tau_{r/2} \wedge \epsilon} g(q_s,h_s) \cdot \rd\wh{W}_s \right)
  = 0
  \]
  almost surely. Returning to \eqref{qintlim} we now conclude that
  \begin{equation}\label{qlimintegra}
    \begin{aligned}
      q_t - \int_0^{t \wedge \wb \tau_{r/2}} g(q_s,h_s) \cdot
      d\wh{W}_s & = \lim_{\epsilon \to 0} \lim_{n \to \infty}
      \int_0^{t \wedge \tau^{n} \wedge \epsilon }
      \frac{|g(q_s^{x_n},h_s^{x_n})|^2}{q_s^{x_n}} \ud s  \\
      & \quad + \lim_{\epsilon \to 0} \lim_{n \to \infty}
      \int_\epsilon^{t \wedge \tau^{n}}
      \frac{|g(q_s^{x_n},h_s^{x_n})|^2}{q_s^{x_n}}\ud s \\
      & = \lim_{\epsilon \to 0} \int_\epsilon^{t \wedge \wb
        \tau_{r/2}} \frac{|g(q_s,h_s)|^2}{q_s} \ud s \\
      & = \int_0^{t \wedge \wb \tau_{r/2}} \frac{|g(q_s,h_s)|^2}{q_s}
      \ud s
    \end{aligned}
  \end{equation}
  holds with probability one. Equation \eqref{ylimintegral} for $Y_t$ now
  follows from \eqref{hlimintegral} and \eqref{qlimintegra} by
  changing coordinates.

  Except for the proofs of Lemma~\ref{lem:qhx1x2},
  Lemma~\ref{lem:taupos}, and Lemma~\ref{taur2bound}, we have now
  established existence of a strong solution $Y_t$ to \eqref{Yteqn}
  (with $r$ replaced by $r/2$).  The uniqueness of the solution
  follows by the same arguments. Suppose that $Y^{1}_t$ and $Y^{2}_t$
  both solve \eqref{Yteqn} with the same Brownian motion and the same
  initial point $Y^1_0 = Y^2_0 = y_0$. Then
  Corollary~\ref{cor:chebyY12} implies that, $\mathbb{Q}$ almost
  surely, $Y^{1}_t = Y^{2}_t$ for all $t \in [0,\tau^1_r \wedge
  \tau^2_r]$ where $\tau^1_r$ and $\tau^2_r$ are the corresponding
  hitting times to $\partial B_r(y_0) \cap \Theta$. In particular,
  $\tau^1_r = \tau^2_r$.  This proves pathwise uniqueness. 
\end{proof}

We now prove Lemma~\ref{lem:qhx1x2}, Lemma~\ref{lem:taupos} and
Lemma~\ref{taur2bound} to complete the proof of
Theorem~\ref{theo:uniqueYAB}.

  \begin{proof}[Proof of Lemma~\ref{lem:qhx1x2}]
    By It\^o's formula the process $(h_1,q_1) = (h_{1,t},q_{1,t})$
    satisfies
    \begin{align}
      \ud h_1 & = f(q_1,h_1) \ud t + m(q_1,h_1) \ud\wh{W}_t,
      \label{newcoord1} \\
      \ud q_1 & = \frac{|g(q_1,h_1)|^2}{q_1} \ud t + g(q_1,h_1) \cdot
      \rd\wh{W}_t, \label{newcoord1b}
    \end{align}
    for $0 \leq t \leq \tau^{1}_r$, where the functions $g = \sqrt{2}
    (\nabla q)^{\TT} \sigma \in \Rm^{d}$, $f = L h \in \Rm^{d-1}$, and $m
    = \sqrt{2}(\nabla h)^{\TT} \sigma \in \Rm^{(d-1) \times d}$, are all
    Lipschitz continuous in their arguments over
    $\wb{B_r^+}$. Similarly, $(h_2,q_2) = (h_{2,t},q_{2,t})$ satisfies
    \begin{align}
      \ud h_2 & =  f(q_2,h_2) \ud t + m(q_2,h_2) \ud\wh{W}_t \label{newcoord2} \\
      \ud q_2 & = \frac{|g(q_2,h_2)|^2}{q_2} \ud t + g(q_2,h_2) \cdot
      \rd\wh{W}_t, \label{newcoord2b} 
    \end{align}
    for $0 \leq t \leq \tau^{2}_r$. Notice that the choice of
    coordinates satisfying \eqref{hqOrthog} has eliminated a
    potentially singular drift term in the equations for $h_{1,t}$ and
    $h_{2,t}$. On the other hand, the drift term in the equations for
    $q_1$ and $q_2$ blows up near the boundary $q = 0$. Indeed, if $r
    > 0$ is small enough, by \eqref{hopf} there is a constant $C_r >
    0$ such that
    \begin{equation}
      \inf_{y \in \wb{B^+_r}} 2 \avg{\nabla q(y)), a(y) \nabla q(y)} \geq 
      2 \lambda \inf_{y \in \wb{B^+_r}} \abs{\nabla q(y)} \geq C_r. 
    \end{equation}
    Hence,
    \begin{equation}\label{hopfglower}
      \begin{aligned}
        |g(q_{1,t}, h_{1,t})|^2 & = 2 \avg{\nabla q(Y^{x_1}_t), a(Y^{x_1}_t) \nabla
          q(Y^{x_1}_t)} \geq 2 \lambda \inf_{y \in \wb{B^+_r}}
        \abs{\nabla q(y)} \geq C_r > 0.
      \end{aligned}
    \end{equation}

    Letting $\tau = \tau_r^1 \wedge \tau_r^2$ and using
    \eqref{newcoord1} and \eqref{newcoord2}, we compute
    \begin{align*}
      \rd\, |h_1 - h_2|^2 & = 2 (h_1 - h_2)^{\TT} (f(q_1,h_1) -
      f(q_2,h_2)) \ud t  \\
      & \quad + 2 (h_1 - h_2)^{\TT} (m(q_1,h_1) - m(q_2,h_2)) \ud\wh{W}_t \\
      & \quad + \tr \bigl( (m(q_1,h_1) - m(q_2,h_2))(m(q_1,h_1) -
      m(q_2,h_2))^{\TT}\bigr) \ud t
    \end{align*} 
    for $0 \leq t \leq \tau$. In particular, 
    \begin{equation}\label{hbound12delta}
      \begin{aligned}
        \wh{\expE}\, \bigl[ |h_{1,t\wedge \tau} - h_{2,t \wedge
          \tau}|^2\bigr] & \leq C \int_{0}^{t} \wh{\expE}\, \bigl[
        \mathbb{I}_{[0,\tau]}(s)
        (q_{1,s} - q_{2,s})^2\bigr] \ud s \\
        & \quad + C \int_{0}^{t} \wh{\expE}\,\bigl[
        \mathbb{I}_{[0,\tau]}(s)|h_{1,s} -
        h_{2,s}|^2\bigr] \ud s + C|x_1 - x_2|, \\
        & \leq C \int_{0}^{t} \wh{\expE}\,\bigl[ (q_{1,s\wedge \tau} -
        q_{2,s\wedge \tau})^2\bigr]\ud s \\
        & \quad + C \int_{0}^{t} \wh{\expE}\,\bigl[ |h_{1,s\wedge
          \tau} - h_{2,s\wedge \tau}|^2\bigr]\ud s + C|x_1 - x_2|,
      \end{aligned}
    \end{equation}
    holds for all $t \geq 0$.

    From \eqref{newcoord1b} and \eqref{newcoord2b} we also compute 
    \begin{equation}\label{dq2dif1}
      \begin{aligned}
        \rd\, (q_1 - q_2)^2 & = 2 (q_1 - q_2) \rd
        (q_1 - q_2) + |g_1 - g_2|^2 \ud t  \\
        & = 2 (q_1 - q_2) \left( \frac{|g_1|^2}{q_1} -
          \frac{|g_2|^2}{q_2}\right) \rd t \\
        & \quad + 2 (q_1 - q_2)(g_1 - g_2) \cdot \rd\wh{W}_t + |g_1 -
        g_2|^2 \ud t
      \end{aligned}
    \end{equation}
    for $0 \leq t \leq \tau$, where we have used the notation $g_1 =
    g(q_1,h_1)$ and $g_2 = g(q_2,h_2)$. We claim that there is a
    constant $C$, depending only on $r$, such that
    \begin{equation}\label{qdifObs} 
      2 (q_1 - q_2)
      \left( \frac{|g_1|^2}{q_1} - \frac{|g_2|^2}{q_2}\right) \leq C (|q_1
      - q_2|^2 + |h_1 - h_2|^2) 
    \end{equation}
    holds for all $t \leq \tau$, with probability one. Both sides of
    \eqref{qdifObs} are invariant when $(q_1,h_1)$ and $(q_2,h_2)$ are
    interchanged. So, we may assume $q_1 \leq q_2$ without loss of
    generality. We consider the following two possibilities. First,
    suppose that
    \begin{equation}\label{poss1}
      0 \leq q_1 \bigl\lvert\abs{g_1}^2 - \abs{g_2}^2\bigr\rvert 
      \leq (q_2 - q_1) \abs{g_1}^2. 
    \end{equation}
    Using this and $q_1 \leq q_2$ we have
    \begin{equation}\label{eq:qdifest1}
      \begin{aligned}
        2 (q_1 - q_2) \left( \frac{\abs{g_1}^2}{q_1} -
          \frac{|g_2|^2}{q_2}\right) & = 2 \frac{(q_1 - q_2)}{q_1 q_2}
        \left( q_2 |g_1|^2 - q_1 |g_2|^2 \right) \\
        & = 2 \frac{(q_1 - q_2)}{q_1 q_2} \left( (q_2 - q_1)|g_1|^2 -
          q_1(|g_2|^2 - |g_1|^2) \right) \\
        & \stackrel{\eqref{poss1}}{\leq} 0.
      \end{aligned}
    \end{equation}
    The other possibility is
    \begin{equation}\label{poss2}
      0 \leq (q_2 - q_1) \abs{g_1}^2 \leq q_1 \bigl
      \lvert\abs{g_1}^2 - \abs{g_2}^2\bigr\rvert.
    \end{equation}
    In this case, we have (also using $q_1 \leq q_2$)
    \begin{equation}\label{eq:qdifest2}
      \begin{aligned}
        2 (q_1 - q_2) \left( \frac{|g_1|^2}{q_1} -
          \frac{|g_2|^2}{q_2}\right) & = 2 \frac{(q_1 - q_2)}{q_1 q_2}
        \left( (q_2  - q_1)|g_1|^2 - q_1( |g_2|^2 - |g_1|^2) \right) \\
        & \leq - 2 \frac{(q_1 - q_2)}{q_1 q_2} q_1 ( |g_2|^2 - |g_1|^2) \\
        & \leq 2 \frac{\abs{q_1 - q_2}}{\abs{q_2}} \bigl\lvert
        \abs{g_2}^2
        - \abs{g_1}^2 \bigr\rvert \\
& \leq 2 \frac{\abs{q_1 - q_2}}{\abs{q_1}} \bigl\lvert
        \abs{g_2}^2
        - \abs{g_1}^2 \bigr\rvert \\
        & \stackrel{\eqref{poss2}}{\leq} 2 \frac{( |g_2|^2 -
          |g_1|^2)^2}{|g_1|^2}.
      \end{aligned}
    \end{equation}
    Therefore, since $\abs{g_1} \geq C_r > 0$ (by \ref{hopfglower}),
    we must have
    \[
    2 (q_1 - q_2) \left( \frac{|g_1|^2}{q_1} -
      \frac{|g_2|^2}{q_2}\right) \leq 2 C_r^{-2} ( |g_2|^2 -
          |g_1|^2)^2 \leq  C (|q_1 - q_2|^2 + |h_1 -
    h_2|^2).
    \]
    where $C > 0$ depends only on $r$. This establishes
    \eqref{qdifObs}.

    Returning to \eqref{dq2dif1} and controlling the first term on the
    right hand side of \eqref{dq2dif1} with \eqref{qdifObs}, we
    conclude that 
    \begin{equation}\label{q12delta}
      \begin{aligned} 
        \wh{\expE}\, \bigl[(q_{1,t\wedge \tau} - q_{2,t \wedge
          \tau})^2 \bigr] & \leq C \int_{0}^t
        \wh{\expE}\,\bigl[\mathbb{I}_{[0,\tau]}(s)
        (q_{1,s} - q_{2,s})^2\bigr] \ud s \\
        & \quad + C \int_{0}^t
        \wh{\expE}\,\bigl[\mathbb{I}_{[0,\tau]}(s)
        |h_{1,s} - h_{2,s}|^2\bigr] \ud s + C |x_1 - x_2|, \\
        & \leq C \int_{0}^t \wh{\expE}\,\bigl[(q_{1,s \wedge \tau} -
        q_{2,s\wedge \tau})^2 \bigr] \ud s \\
        & \quad + C \int_{0}^t \wh{\expE}\,\bigl[|h_{1,s\wedge \tau} -
        h_{2,s\wedge \tau}|^2\bigr] \ud s + C |x_1 - x_2|.
      \end{aligned}
    \end{equation}
    By combining \eqref{hbound12delta} and \eqref{q12delta} and
    applying Gronwall's inequality, we conclude that 
    \begin{equation}\label{gronwall1} 
      \wh{\expE}\, \bigl[
      |h_{1,t \wedge \tau} - h_{2,t\wedge \tau}|^2\bigr] +
      \wh{\expE}\, \bigl[(q_{1,t\wedge \tau} - q_{2,t\wedge \tau})^2\bigr] \leq C|x_1
      - x_2|\left(1 + t e^{C t}\right), \quad t \geq 0. 
    \end{equation}
    Using \eqref{dq2dif1} and \eqref{qdifObs} we also obtain 
    \begin{equation} \label{gronwall2}
      \begin{aligned}
        \wh{\expE}\, \Bigl[\max_{t \in [0,T] }(q_{1,t\wedge \tau} -
        q_{2,t \wedge \tau})^2\Bigr] & \leq
        C \int_{0}^T \wh{\expE}[(q_{1,s \wedge \tau} - q_{2,s\wedge \tau})^2] \ud s  \\
        & \quad + C \int_{0}^T \wh{\expE}[|h_{1,s\wedge \tau} -
        h_{2,s\wedge \tau}|^2] \ud s + C |x_1 - x_2| \\
        & \quad + \wh{\expE}\left[\max_{t \in [0,T] }
          V_{t} \right]
      \end{aligned}
    \end{equation}
    where $V_t$ is the martingale
    \[
    V_t = \int_0^{t\wedge \tau} 2 (q_1 - q_2)(g_1 - g_2) \cdot \ud \wh{W}_s.
    \]
    By the Burkholder-Davis-Gundy inequality (e.g. \cite{ReYo:04}*{Sec
      IV.4}) and \eqref{gronwall1}, we have
    \[
    \wh{\expE}\left[\max_{t \in [0,T] } V_t \right] \leq C
    \left( \int_0^T \wh{\expE}[ (q_{1,s\wedge \tau} - q_{2,s \wedge
        \tau})^2 ] \ud s \right)^{1/2} \leq C_T|x_1 - x_2|^{1/2}.
    \]
    This, together with \eqref{gronwall1} and \eqref{gronwall2}, gives
    us
    \[
    \wh{\expE}\left[\max_{t \in [0,T] }(q_{1,t\wedge \tau} - q_{2,t
        \wedge \tau})^2 \right] \leq C_T |x_1 - x_2|^{1/2}.
    \]
    Similar arguments for $h_1 - h_2$ lead to
    \[
    \wh{\expE}\left[\max_{t \in [0,T] }|h_{1,t\wedge \tau} - h_{2,t
        \wedge \tau}|^2 \right] \leq C_T |x_1 - x_2|.
    \]
  \end{proof}

\begin{proof}[Proof of Lemma \ref{lem:taupos}]
  Suppose $\tau_r = 0$ holds with probability $\epsilon > 0$. Because
  of \eqref{seriesconv} we may choose $m$ sufficiently large so that
  \[
  \sum_{n =m}^\infty \max_{0 \leq t \leq (T \wedge \wh \tau^n)}
  |Y^{x_{n+1}}_t - Y^{x_n}_t| < r/4
  \]
  holds with probability at least $1 - \epsilon/2$. Therefore, with
  probability at least $\epsilon/2$ we have both $\tau_r = 0$ and 
  \begin{equation}
    \liminf_{n \to \infty} |Y^{x_{n}}_{\tau_r^{n}} -
    Y^{x_m}_{\tau_r^{n}} | \leq r/4. \label{maxrhalf} 
  \end{equation}
  Recall that $|Y^{x_m}_0 - y_0| \leq 25^{-m}$. Let $m$ be larger, if
  necessary, so that $25^{-m} \leq r/4$. This and \eqref{maxrhalf}
  imply that
  \[
  \liminf_{n \to \infty} |Y^{x_{n}}_{\tau_r^{n}} - y_0| \leq
  \liminf_{n \to \infty} \left( |Y^{x_{n}}_{\tau_r^{n}} -
    Y^{x_m}_{\tau_r^{n}}| + | Y^{x_m}_{\tau_r^{_n}} - y_0|\right) \leq
  r/4 + 25^{-m} \leq r/2
  \]
  holds with probability at least $\epsilon/2$. However, this
  contradicts the fact that $Y^{x_n}_{\tau_r^{n}} \in \partial
  B_r(y_0)$ for all $n$. Hence, we must have $\tau_r > 0$ with
  probability one.
\end{proof}

\medskip

\begin{proof}[Proof of Lemma \ref{taur2bound}]
  The fact that $\wb \tau_{r/2} > 0$ with probability one follows from
  an argument very similar to the proof of Lemma \ref{lem:taupos}. The
  fact that $\wb \tau_{r/2} < \tau_r$ will follow by showing that 
  \begin{equation}
    \limsup_{t \nearrow \tau_r} |Y_t - y_0| \geq
    r \label{limsupYttaur}
  \end{equation}
  holds with probability one.  First, suppose that $\tau^{n}_r <
  \tau_r$ and that
  \[
  \tau^{n}_r = \inf_{k \geq n} \tau^{k}_r
  \]
  Then by \eqref{seriesconv} we have
  \[
  |Y_{\tau^{n}_r} - y_0| \geq |Y^{x_n}_{\tau^{n}_r} - y_0| -
  |Y_{\tau^{n}_r} - Y^{x_n}_{\tau^{n}}| = r - |Y_{\tau^{n}_r} -
  Y^{x_n}_{\tau^{n}_r}| = r - R(n).
  \]
  where $R(n)$ is the series remainder
  \[
  R(n) = \sum_{k = n}^\infty \max_{0 \leq t \leq \tau^{n}_r}
  |Y^{x_{k+1}}_t - Y^{x_k}_t|
  \]
  which converges to zero, with probability one, as $n \to
  \infty$. So, with probability one, if there is an increasing
  sequence of such times $\tau^{{n_j}}_r \nearrow \tau_r$ as $j \to
  \infty$, we see that \eqref{limsupYttaur} must hold.  On the other
  hand, suppose there is no such sequence. Then we must have
  $\tau^{n}_r \geq \tau_r$ for $n$ sufficiently large. Hence
  $Y^{x_n}_t$ must converge to $Y_t$ uniformly on the closed interval
  $[0,\tau_r]$. Suppose $\tau^{n}_r \geq \tau_r$ and $\tau^{n}_r =
  \sup_{k \geq n} \tau^{k}_r$. Then for all $k \geq n$, we have
  \begin{equation*}
    \begin{aligned}
      |Y^{x_n}_{\tau^{k}_r} - y_0| & \geq |Y^{x_k}_{\tau^{k}_r} - y_0|
      - |Y^{x_n}_{\tau^{k}_r} -Y^{x_k}_{\tau^{k}_r}| \\
      & = r - |Y^{x_n}_{\tau^{k}_r} -Y^{x_k}_{\tau^{k}_r}| \geq r -
      M(n).
    \end{aligned}
  \end{equation*}
  Therefore, since $Y^{x_n}_t$ is continuous on $[0,\tau^{n}_r]$ and
  since $\tau_r = \liminf_{k \geq 0} \tau^{k}_r$, we have
  \[
  |Y^{x_n}_{\tau_r} - y_0| \geq r - M(n).
  \]
  Since $Y^{x_n}_{\tau_r} \to Y_{\tau_r}$ in this case and $Y_t$ is
  continuous on $[0,\tau_r]$, then with probability one, this case
  also implies that \eqref{limsupYttaur} holds.  Having established
  that $0 < \wb \tau_{r/2} < \tau_r$ we conclude that $Y^{x_n}_{t} \to
  Y_t$ uniformly on $[0,\wb \tau_{r/2}]$. Since each $Y^{x_n}_t$ is
  $\wh{\mathcal{F}}_t$-adapted, so is the limit $Y_t$. In particular,
  $\wb \tau_{r/2}$ is a stopping time.
\end{proof}

\medskip

\begin{rmk}
  Let us point out that if $y_0 \in \partial A$ and $T > 0$ is
  sufficiently small, the equation
  \begin{equation}\label{YbarODE}
    \wb Y(t) = y_0 + \int_0^t K(\wb Y(s))
    \ud s, \quad t \in [0,T]. 
  \end{equation}
  has a unique solution satisfying $\wb Y(t) \in \Theta$ for all $t
  \in (0,T]$. Indeed, let $z(t)$ solve the ODE 
  \[
  z'(t) = 2 a (z(t))\nabla q(z(t)) + q(z(t))b(z(t))
  \]
  for $t \in [0,T]$, with $z(0) = y_0$. For sufficiently small $T$,
  $z(s) \in \Theta$ for $t \in (0,T]$. Hence $q(z(s)) > 0$ for $t \in
  (0,T]$ and the function $F(t) = \int_0^t q(z(s))\,ds$ is
  invertible. Now, it is easy to check that the function $\wb Y(t) =
  z(F^{-1}(t))$ is continuous on $[0,T]$ and satisfies
  \eqref{YbarODE}. Moreover, $\wb Y(t) \in \Theta$ for all $t \in
  (0,T]$. In fact, 
  \[
  \wb Y(t) \sim y_0 + 2 \sqrt{t}\, \frac{a(y_0) \nabla
    q(y_0)}{\avg{\nabla q(y_0),a(y_0) \nabla q(y_0)}^{1/2}}
  \]
  for small $t$.
\end{rmk}

\medskip

We state and prove two properties of the transition path process,
which will be used later.

\begin{prop}\label{prop:Y0contin} 
  Let $F$ be a bounded and continuous functional on
  $C([0,\infty))$. Define
  \[
  g(x) = \wh{\expE}\,[ F(Y) \mid Y_0 = x]
  \]
  where $Y_t$ satisfies \eqref{TPode1}. Then $g \in C(\wb{\Theta})$.
\end{prop}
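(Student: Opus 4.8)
The plan is to deduce continuity of $g$ from a pathwise stability statement for the transition path process, combined with bounded convergence. Since the SDE \eqref{TPode1} has a unique strong solution (Theorem~\ref{theo:uniqueYAB}), I would realize the whole family $\{Y^x\}_{x \in \wb{\Theta}}$ on a single probability space driven by one Brownian motion $\wh{W}$, writing $Y^x$ for the solution with $Y^x_0 = x$, stopped at its hitting time $\tau_B$ of $\wb{B}$. It then suffices to show that $x_n \to x_*$ in $\wb{\Theta}$ implies $Y^{x_n} \to Y^{x_*}$ uniformly on compact time sets, in $\mathbb{Q}$-probability. Granting this, any subsequence admits a further subsequence along which the convergence is $\mathbb{Q}$-almost sure; continuity of $F$ then gives $F(Y^{x_n}) \to F(Y^{x_*})$ almost surely, and boundedness of $F$ together with dominated convergence yields $g(x_n) \to g(x_*)$, so the subsequence principle gives the full limit.

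The core is therefore the pathwise stability estimate, for which I would split $\wb{\Theta}$ into a region bounded away from the singular set and a neighborhood of $\partial A$. On $\{q \ge \delta\}$ the drift $K$ is Lipschitz, so a standard Gronwall plus Burkholder--Davis--Gundy argument gives Lipschitz-in-initial-data control of $\sup_t |Y^{x_1}_t - Y^{x_2}_t|^2$ up to exit from the region. In the neighborhood $\{q < \delta\}$ of $\partial A$ I would cover $\partial A$ by finitely many of the coordinate patches constructed in the proof of Theorem~\ref{theo:uniqueYAB} and invoke Lemma~\ref{lem:qhx1x2} and Corollary~\ref{cor:chebyY12}, which provide the merely H\"older coupling bound $\mathbb{Q}(\max_{t \le T \wedge \tau}|Y^{x_1}_t - Y^{x_2}_t| > \alpha) \le C \alpha^{-2}|x_1 - x_2|^{1/2}$ up to exit from the patch. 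These estimates extend to endpoints on $\partial A$ itself by passing to the limit along the approximating sequences $x \to y_0 \in \partial A$ used to construct $Y^{y_0}$, using the almost-sure uniform convergence established there.

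To pass from these local bounds to a global one up to $T \wedge \tau_B$, I would introduce stopping times at the successive crossings of the levels $q = \delta$ and $q = \delta/2$, which alternate between the Lipschitz region and the singular neighborhood; the number of such crossings on $[0, T \wedge \tau_B]$ is almost surely finite, hence at most a fixed $M_0$ off an event of arbitrarily small probability. On the event of at most $M_0$ crossings I would chain the local estimates across the sub-intervals using the strong Markov property. Each step controls the endpoint difference by a positive power of the difference at its start---exponent $2$ in the Lipschitz region and exponent $1/2$ near $\partial A$---so composing over at most $M_0$ steps yields a bound of the form $C_{M_0,T}|x_n - x_*|^{\beta}$ with $\beta = \beta(M_0) > 0$, which tends to $0$. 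Combined with the small probability of more than $M_0$ crossings, this gives convergence in probability on $[0, T \wedge \tau_B]$. The terminal stop is handled via the Hopf condition \eqref{hopf}: since $\wh{n} \cdot \nabla q > 0$ on $\partial B$ and $a$ is uniformly elliptic, $Y^{x_*}$ crosses $\partial B$ transversally, so almost surely $\tau_B$ is a continuity point of the hitting-time map with no tangential grazing, and the stopped processes converge uniformly on all of $[0,T]$.

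The main obstacle is precisely this global-in-time coupling through the singular boundary $\partial A$: there only the H\"older coupling estimate of Lemma~\ref{lem:qhx1x2} is available rather than a Lipschitz one, so the two processes may separate in the exponent each time the pair revisits $\partial A$. Keeping the compounded exponent $\beta(M_0)$ positive and the constant finite over the random number of excursions---which forces the truncation at $M_0$ crossings and the attendant probabilistic bookkeeping---is the delicate part; everything else is either standard SDE stability or a direct appeal to the estimates already proved in Theorem~\ref{theo:uniqueYAB}. As an alternative that sidesteps the compounding, one could instead establish tightness of $\{Y^{x_n}\}$ in $C([0,\infty))$, show every weak limit point solves \eqref{TPode1} with $Y_0 = x_*$, and conclude $Y^{x_n} \Rightarrow Y^{x_*}$ by the uniqueness in Theorem~\ref{theo:uniqueYAB}; the delicate step there is identifying the singular drift in the limit, which re-uses the occupation-time control of $1/q$ from the existence proof.
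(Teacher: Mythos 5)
Your proposal is correct and takes essentially the same approach as the paper: continuity of $g$ is reduced to stability of $Y^x$ in its initial point (uniformly on compact time sets, in probability), an almost surely convergent subsequence is extracted, and one concludes from continuity and boundedness of $F$ via dominated convergence and the subsequence principle, with the key coupling input being Lemma~\ref{lem:qhx1x2} and Corollary~\ref{cor:chebyY12}. The paper's own proof is simply terser---it invokes Corollary~\ref{cor:chebyY12} ``as in the proof of Theorem~\ref{theo:uniqueYAB}''---so your crossing-count chaining through the singular region near $\partial A$ and the transversality remark at $\partial B$ are elaborations of details the paper leaves implicit rather than a genuinely different method.
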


\begin{proof}
  Suppose that $\{x_n\}_{n=1}^\infty \subset \wb{\Theta}$ and that
  $x_n \to x \in \wb{\Theta}$ as $n \to \infty$.  We claim that there
  must be a subsequence $\{ x_{n_j} \}_{j=1}^\infty$ such that,
  $\mathbb{Q}$-almost surely, 
  \begin{equation} 
    \lim_{j \to \infty} F(Y^j) = F(Y), \label{FYconv} 
  \end{equation}
  where $Y^j_t$ satisfies \eqref{TPode1} with $Y^j_0 = x_{n_j}$, and
  $Y_t$ satisfies \eqref{TPode1} with $Y_0 = x$. Since $F$ is bounded
  and continuous on $C([0,\infty))$, the dominated convergence theorem
  then implies that
  \[
  \lim_{j \to \infty} g(x_{n_j}) = \lim_{j \to \infty} \wh{\expE}\,[
  F(Y) \mid Y_0 = x_{n_j}] = \wh{\expE}\,[ F(Y) \mid Y_0 = x] = g(x).
  \]
  Since the limit is independent of the subsequence, this implies that
  $g(x)$ is continuous.

  To establish \eqref{FYconv}, we must show that $Y^j_t \to Y_t$
  uniformly on compact subsets of $[0,\infty)$.  This follows from
  Corollary~\ref{cor:chebyY12}, as in the proof of
  Theorem~\ref{theo:uniqueYAB}.
\end{proof}

\begin{prop} \label{prop:Yexitbound}
For any $R > 0$, there are constants $k_1, k_2 > 0$ such that 
\[
\mathbb{Q}( Y_t \in \Theta \mid Y_0 = x) \leq k_1 e^{-k_2 t}
\]
holds for all $t \geq 0$ and $x \in \wb{\Theta}$, $|x| < R$.
\end{prop}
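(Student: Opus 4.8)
The plan is to reduce the statement to an exponential tail bound for the hitting time $\tau_B$, and then to prove that bound by a Foster--Lyapunov (supermartingale) argument combined with a uniform reachability estimate for $\wb{B}$. First I would observe that, by Theorem~\ref{theo:uniqueYAB}, $Y_t \notin \wb{A}$ for all $t>0$, while for $t \geq \tau_B$ the process is frozen at $Y_{\tau_B} \in \partial B \subset \wb{A \cup B}$. Hence, for $t>0$, $\{Y_t \in \Theta\} = \{t < \tau_B\}$, and it suffices to show $\mathbb{Q}(\tau_B > t \mid Y_0 = x) \leq k_1 e^{-k_2 t}$ uniformly for $x \in \wb{\Theta}$ with $|x| < R$.

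The core of the proof is a Lyapunov function. I would construct $V : \wb{\Theta} \to [1,\infty)$, smooth on $\Theta$, with three properties: (i) $V \equiv 1$ on a neighborhood $N$ of $\wb{A}$ in $\wb{\Theta}$; (ii) $V(x) \to \infty$ as $|x| \to \infty$; and (iii) $L^q V \leq -c V$ outside some large compact set $K \subset \wb{\Theta}$, for a constant $c>0$. Property (i) neutralizes the singular drift: since $L^q$ annihilates constants ($L^q 1 = q^{-1} L q = 0$ in $\Theta$), we have $L^q V \equiv 0$ on $N$, so the blow-up of $2a\nabla q / q$ near $\partial A$ never enters the estimate. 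Property (iii) is where the confining hypothesis on $b$ is used: at infinity the committor $q$ tends to a positive limit and $2 a \nabla q / q \to 0$, so $L^q V \approx L V$ there, and the inward drift of $b$ supplies the negative term; one picks the growth of $V$ (a power or exponential of $|x|$) to match the decay rate of the drift. On the intermediate compact region the coefficients of $L^q$ are bounded, so $|L^q V| \leq C_0$ there, giving the global drift inequality $L^q V \leq -cV + b_0 \mathbb{I}_K$. Applying It\^o's formula to $e^{ct} V(Y_{t \wedge \sigma})$, where $\sigma$ is the first return time to $K$, and optional stopping, then yields $\wh{\expE}[ e^{c \sigma} \mid Y_0 = x] \leq V(x)$, i.e. the process returns to $K$ in a time with a uniform exponential moment.

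Next I would prove the reachability estimate: there exist $T_0 > 0$ and $\delta \in (0,1)$ with $\inf_{x \in K} \mathbb{Q}(\tau_B \leq T_0 \mid Y_0 = x) \geq \delta$. For $x \in K \cap \Theta$ this follows from the uniform ellipticity of $a$ and the support theorem, which give a positive lower bound on the probability of steering $Y$ into $\wb{B}$ before time $T_0$; compactness of $K$ together with continuity of $x \mapsto \mathbb{Q}(\tau_B \leq T_0 \mid Y_0 = x)$ make this uniform. The boundary points $x \in K \cap \partial A$, where the drift degenerates, are handled by the continuity of $x \mapsto \wh{\expE}[F(Y) \mid Y_0 = x]$ on all of $\wb{\Theta}$ established in Proposition~\ref{prop:Y0contin} (which rests on the pathwise estimate of Corollary~\ref{cor:chebyY12}), allowing the lower bound to be extended up to $\partial A$.

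Finally I would combine the two ingredients by a geometric-trials argument. Starting from any $x \in K$, in each block of length $T_0$ the process hits $\wb{B}$ with probability at least $\delta$; if it fails, it may leave $K$, but by the Lyapunov estimate it returns to $K$ in a time with a uniform exponential moment, after which a fresh trial begins by the strong Markov property. Thus $\tau_B$ is dominated by a geometric number of exponentially-integrable return times, so $\tau_B$ has a uniform exponential moment $\wh{\expE}[e^{k_2 \tau_B} \mid Y_0 = x] \leq M < \infty$ for $x \in K$; since $\wb{\Theta} \cap \{|x| < R\} \subset K$, Markov's inequality gives the claim. The main obstacle is the construction of $V$: reconciling the flatness forced near $\partial A$ with the strict negativity $L^q V \leq -cV$ required at infinity, and verifying the latter from the confining assumption on $b$ together with the decay of $2 a \nabla q / q$ --- this is precisely where the ergodicity hypotheses enter quantitatively.
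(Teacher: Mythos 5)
Your overall architecture (reduce the statement to a tail bound for $\tau_B$, establish a Lyapunov drift condition for $L^q$ outside a compact set, a minorization estimate on that compact set, and then geometric trials via the strong Markov property) is a legitimate alternative strategy, and your observation that $L^q$ annihilates constants, so that a $V$ which is flat near $\wb{A}$ never sees the singular drift, is correct. But the proposal has a genuine gap exactly where you flag ``the main obstacle'': the existence of $V$ with $L^q V \le -cV$ off a compact set is never established, and the asymptotics on which you hang it are unproven and do not follow from the paper's hypotheses. The lower bound $\inf\{q(x) : x \in \Theta,\ |x| \ge R_0\} > 0$ is provable (any path started far away must cross the sphere $\{|y| = R_0\}$ before reaching $\wb{A} \cup \wb{B}$, so $q(x) \ge \min_{|y| = R_0} q(y)$ by the strong Markov property), but the claim that $2a\nabla q/q \to 0$ at infinity is a gradient estimate at infinity for an $L$-harmonic function: interior estimates give only $|\nabla q(x_0)| \le C\bigl(a, \sup_{B_1(x_0)} |b|\bigr)$, and the paper's standing assumptions place no bound on $b$ at infinity (the sample condition $x \cdot b(x) < -r$ with $r > 1 + d/2$ allows $|b|$ to grow or to decay). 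Thus the conditioned drift $K = b + 2a\nabla q/q$ could a priori be comparable in size to $b$ with uncontrolled direction, and no candidate $V$ is exhibited for which $LV$ dominates this extra term. Since this inequality drives the return-time exponential moment and hence the final bound, the proposal as written is a program rather than a proof.

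It is worth recording both how the gap can be closed and how the paper sidesteps it. If you want a Lyapunov function for $L^q$, do not build one from scratch: the $h$-transform identity $L^q f = q^{-1} L(qf)$ converts any Lyapunov function $W$ for $L$ (i.e.\ $LW \le -cW + b_0 \mathbb{I}_{K_0}$, the quantitative ergodicity of $X$ that must be assumed in any case --- the paper itself invokes such a consequence when it asserts $\Pm(X_s \in \Theta\ \forall s \in [0,t] \mid X_0 = x) \le C_1 e^{-C_2 t}$) into $V = W/q$ with $L^q V = q^{-1} L W \le -cV + (b_0/q)\mathbb{I}_{K_0}$, requiring no information on $\nabla q$ at infinity; the blow-up of $W/q$ at $\partial A$ is harmless and of the right sign, since $1/q(Y_t)$ is a nonnegative driftless local martingale, hence a supermartingale (this is precisely the computation in the paper's proof of Theorem~\ref{theo:uniqueYAB}). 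The paper's own proof of Proposition~\ref{prop:Yexitbound} avoids Lyapunov functions for $Y$ altogether: on $\{q \ge \epsilon\}$ it pushes the exponential bound for $X$ through the $h$-transform, $\mathbb{Q}(Y_t \in \Theta \mid Y_0 = x) = \Pm(X_s \in \Theta\ \forall s \le t,\ \tau_B < \tau_A \mid X_0 = x)/q(x) \le (C_1 e^{-C_2 t} \wedge \epsilon)/\epsilon$, and on the region $\{q < \epsilon\}$ it notes that $q(Y_t)$ has drift $|g|^2/q_s \ge C_\epsilon/\epsilon$, so remaining there up to time $T$ forces the martingale part to lie below $\epsilon - tC_\epsilon/\epsilon$, an event of probability at most $e^{-\epsilon^2 C_4 T}$; the two regimes are then glued by the strong Markov property at the exit from $\{q < \epsilon\}$, which is the role your geometric-trials step was meant to play. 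Finally, a smaller repair your minorization step needs: $\mathbb{I}\{\tau_B \le T_0\}$ is not a continuous functional on $C([0,\infty))$, so Proposition~\ref{prop:Y0contin} does not apply to it directly; you should minorize it by a continuous functional of the path, for instance one detecting entry into a slightly shrunken copy of $B$.
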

\begin{proof}
If $x \in \Theta$, then by the Doob h-transform, we know that
\begin{align*}
  \mathbb{Q}( Y_t \in \Theta \mid Y_0 = x) & = \frac{\Pm(X_s \in
    \Theta \;\forall s \in [0,t], \;\; \tau_B < \tau_A
    \mid X_0 = x)}{\Pm( \tau_B < \tau_A \mid X_0 = x)} \\
  & \leq \frac{\Pm(X_s \in \Theta \;\forall s \in [0,t]\mid X_0 = x)
    \wedge \Pm( \tau_B < \tau_A \mid X_0 = x)}{\Pm( \tau_B < \tau_A
    \mid X_0 = x)} \\
  & = \frac{\Pm(X_s \in \Theta \;\forall s \in [0,t]\mid X_0 = x)
    \wedge q(x)}{q(x)}.
\end{align*}
Since the process $X_t$ is ergodic, there must be constants $C_1, C_2$
such that
\[
\Pm(X_s \in \Theta \;\forall s \in [0,t]\mid X_0 = x) = \Pm(X_s \notin
\wb{A} \cup \wb{B} \;\forall s \in [0,t]\mid X_0 = x) \leq C_1 e^{-C_2
  t}
\]
for all $|x| \leq R$, $t > 0$. So, for any $\epsilon > 0$,
\begin{equation}\label{Qepslower1}
  \mathbb{Q}( Y_t \in \Theta \mid
  Y_0 = x) \leq \frac{C_1 e^{-C_2 t} \wedge \epsilon}{\epsilon}
\end{equation}
holds for all $t > 0$ and $x \in \{ x \in \Theta \mid\; |x| \leq R,\,
q(x) \geq \epsilon \}$.

The bound \eqref{Qepslower1} does not include points near $\partial
A$, where $q(x) < \epsilon$. Fix $\epsilon \in (0,1)$ and define the
set $S = \{ x \in \Theta \mid q(x) < \epsilon \} \cup \wb A$. If
$\epsilon$ is small enough, this set is bounded and we may assume
$\abs{x} < R$ for all $x \in S$. Suppose $Y_0 = x$ with $x \in S \cap
\wb{\Theta}$.  Let $q_t = q(Y_t)$, which satisfies
\[
q_t = q_0 + \int_0^t \frac{|g(Y_s)|^2}{q_s} \ud s + \int_0^t g(Y_s)
\ud \wh{W}_s
\]
where $g(y) = \sqrt{2}(\nabla q(y))^{\TT} \sigma(y)$. By \eqref{hopf}
we know that if $\epsilon > 0$ is small enough, there is a constant
$C_\epsilon > 0$ such that $|g(y)|^2 \geq C_\epsilon$ for all $y \in
\wb{S} \cap \wb{\Theta}$. Therefore, if $Y_t \in \wb{S} \cap
\wb{\Theta}$ for all $t \in [0,T]$, we must have $q_t \leq \epsilon$
for all $t \in [0,T]$ and
\[
q_t \geq \int_0^t \frac{C_\epsilon}{q_s} \ud s + \int_0^t g(Y_s)
\ud\wh{W}_s \geq t\epsilon^{-1} C_{\epsilon} + \int_0^t g(Y_s)
\ud\wh{W}_s
\]
for all $t \in [0,T]$. This happens only if the martingale $M_t =
\int_0^t g(Y_s) \ud\wh{W}_s$ satisfies
\[
M_t \leq \epsilon - t \epsilon^{-1} C_\epsilon, \quad t \in [0,T].
\]
To control the probability of this event, for any $\alpha > 0$, $\beta
> 0$, $T > 0$, Chebychev's inequality implies
\[
\mathbb{Q}(M_T \leq - \alpha T ) \leq e^{-\beta \alpha T}
\wh{\expE}[e^{-\beta M_T}] \leq e^{-\beta \alpha T}
\wh{\expE}\Bigl[\exp\Bigl(\frac{\beta^2}{2} \int_0^T |g|^2
ds\Bigr)\Bigr] \leq e^{- \beta \alpha T + \frac{\beta^2}{2}
  \norm{g}_\infty^2 T}.
\]
By choosing $\beta = \alpha/\norm{g}_\infty^2$ we have $\mathbb{Q}(M_T
\leq - \alpha T ) \leq e^{- \alpha^2 C_3 T}$. Hence there is a
constant $C_4$ such that
\begin{equation}\label{escapeS1}
  \mathbb{Q}\left( Y_t \in \wb{S} \cap \wb{\Theta}, \;\;\; \forall\;t \in [0,T] \mid Y_0 = x \right) \leq e^{- \epsilon^2 C_4 T} 
\end{equation}
holds for all $T > 1$ and $x \in \wb{S} \cap \wb{\Theta}$.

Now we combine \eqref{Qepslower1} and \eqref{escapeS1}. Let $\tau_S =
\inf \{ t > 0 \mid Y_t \in \partial S \}$. By \eqref{escapeS1} we have
$\mathbb{Q} \left( \tau_S > t/2\mid Y_0 = x \right) \leq e^{- C_5 t}$
holds for all $x \in \wb{S} \cap \wb{\Theta}$. Therefore, since
$\tau_S$ is a stopping time, we conclude that 
\begin{align*}
  \mathbb{Q} \left( Y_t \in \Theta \mid Y_0 \in x \right) & \leq
  \mathbb{Q} \left( Y_t \in \Theta, \tau_S < t/2 \mid Y_0 \in x\right)
  + e^{- C_5 t} \\
  & \leq \sup_{y \in \partial S} \mathbb{Q} \left( Y_{t/2} \in \Theta
    \mid Y_0 \in y\right) + e^{- C_5 t} \\
  & \leq \frac{C_1 e^{-C_2 t} \wedge \epsilon}{\epsilon} + e^{-C_5 t}.
\end{align*}
for all $x \in \wb{S} \cap \wb{\Theta}$.
\end{proof}

\begin{proof}[Proof of Theorem \ref{theo:XtauAmin}]
  Since $\tau_{A,n}^+$ is a stopping time, it suffices to prove the
  result for $n = 0$. Fix $\epsilon > 0$ and let $S \supset \wb A$ be
  the open set
  \[
  S = \{ x \in \Theta \mid q(x) < \epsilon \} \cup \wb A.
  \]
  For $\epsilon > 0$ small, this is a bounded set that separates $A$
  and $B$. The boundary $\partial S$ is an isosurface for $q$: $q(x) =
  \epsilon$ for $x \in \partial S$. As $\epsilon \to 0$, $S$ shrinks
  to $A$, and the Hausdorff distance $d_{\mathcal{H}}(\partial
  S, \partial A)$ is $\mathcal{O}(\epsilon)$ (because of
  \eqref{hopf}).

  Recalling that $\tau_{A,0}^+ = \inf \{ t \geq 0 \mid X_t \in \wb{A}
  \}$, we define
  \[
  r_{S,0}  =  \inf \{ t > \tau_{A,0}^+ \mid X_t \in \partial S\}.
  \]
  which is a stopping time with respect to $\mathcal{F}_t$. Then for
  $k \geq 0$, we define inductively the stopping times (see
  Figure~\ref{fig:stoptimes})
  \begin{align*}
    r_{A,k} & = \inf \{ t > r_{S,k}  \mid X_t \in \overline{A} \}, \\
    r_{B,k} & = \inf \{ t > r_{S,k}  \mid X_t \in \overline{B} \},  \\
    r_{S,k+1} & = \inf \{ t > r_{A,k} \mid X_t \in \partial S\}.
  \end{align*}
  Observe that $r_{S,k} < r_{A,k} < r_{S,k+1}$, although it is
  possible that $r_{B,k} = r_{B,k+1}$. Let $r_{AB,k} = r_{A,k} \wedge
  r_{B,k}$, which is finite with probability one.  We also define the
  random time
  \[
  \tau_{S,j} = \inf \{ t > \tau_{A,j}^- \mid X_t \in \partial S \}.
  \]

  \begin{figure}[htp]
    \includegraphics[height = 2in]{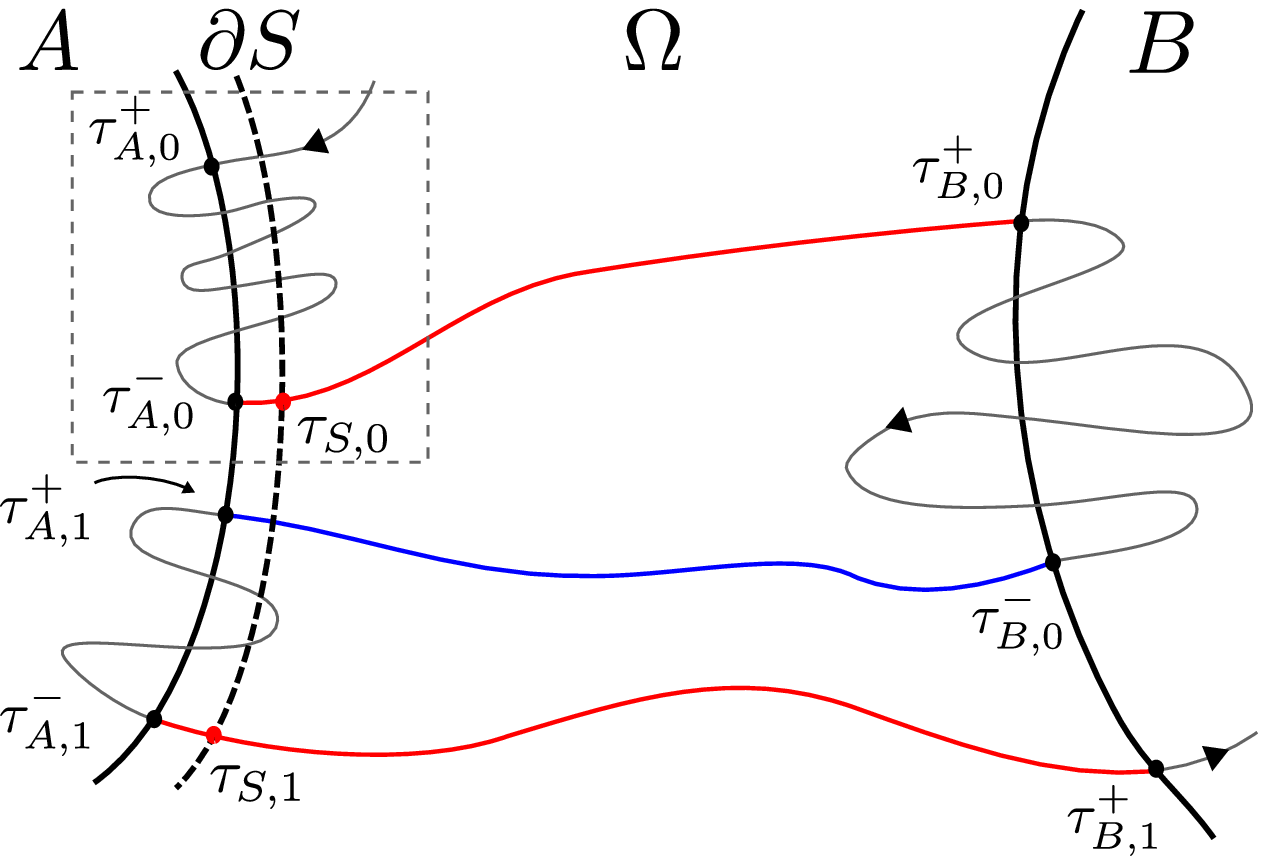} \qquad
    \includegraphics[height = 2in]{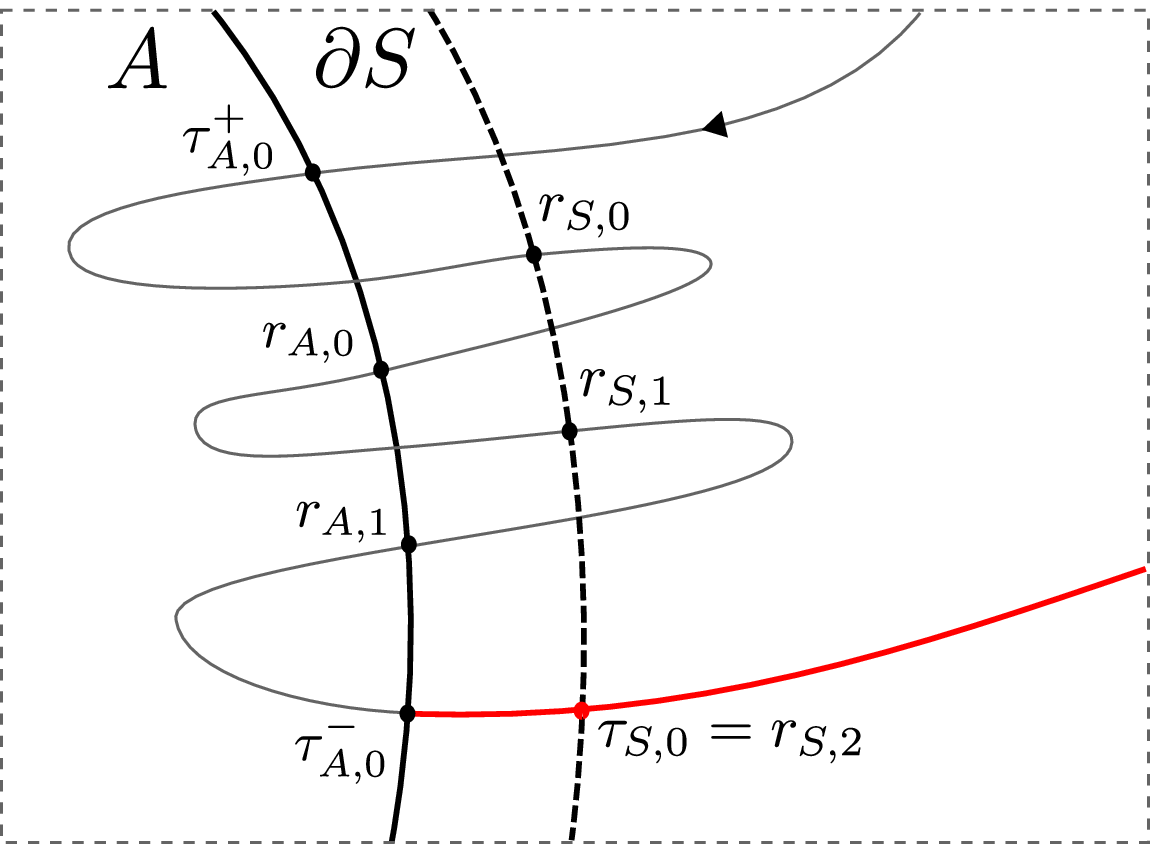}
    \caption{(Left panel) The set $S$ and random times $\tau_{S,
        j}$. (Right panel) Zoom-in of the boxed region together with
      stopping times $r_{S, k}$ and $r_{A, k}$.} \label{fig:stoptimes}
  \end{figure}

  Although $\tau_{S,j}$ is not a stopping time with respect to
  $\mathcal{F}_t$, the relation
  \begin{equation}\label{rsktausj}
    \{ r_{S,k} \mid k \geq 0,\;\; r_{B,k} < r_{A,k} \} = \{ \tau_{S,j}\}_{j=0}^\infty
  \end{equation}
  holds $\Pm$-almost surely. 

  Now, let 
  \[
  Y^0_t = X_{(t + \tau_{A,0}^-) \wedge \tau_{B,0}^+}, \quad \quad t
  \geq 0,
  \]
  and let $h_0 = \tau_{S,0} - \tau_{A,0}^-$. Since $F$ is bounded and
  continuous, and since $h_0 \to 0$ ($\Pm$ almost surely) as $\epsilon
  \to 0$, we have
  \begin{equation} \label{fYthexp2} \expE[F(X_{\cdot \,+
      \tau_{A,0}^-})]= \expE[F(Y^0_{\cdot})] = \lim_{\epsilon \to 0}
    \expE[F(Y^0_{\cdot \, + h_0})].
  \end{equation}
  We will show that
  \[
  \lim_{\epsilon \to 0} \expE[F(Y^0_{\cdot \, + h_0})] = \expE[ g(X_{\tau_{A,0}^-})]
  \]
  where $g(x) = \wh{\expE}[F(Y_\cdot)\mid Y_0 = x]$.

  \medskip

  Let $M$ be the unique (random) integer such that 
  \begin{equation*}
    \tau_{S,0} = r_{S,M}.
  \end{equation*}
  Equivalently, $M = \min \{ k \geq 0 \mid r_{B,k} < r_{A,k}
  \}$. Since $r_{B,k} > r_{A,k}$ for all $k < M$, we have
  \begin{equation}
    F(Y^0_{\cdot \, + h_0}) = \sum_{k=0}^M F(X_{\cdot \, + r_{S,k}})
    \mathbb{I}_{r_{B,k} < r_{A,k}} = \sum_{k=0}^\infty F(X_{\cdot \, +
      r_{S,k}}) \mathbb{I}_{r_{B,k} < r_{A,k}} \mathbb{I}_{k \leq M}.
  \end{equation}
  Observe that the event $\{k \leq M\}$ coincides with the event that
  $r_{B,j} > r_{A,j}$ for all $j < k$, so the event $\{k \leq M\}$ is
  measurable with respect to $\mathcal{F}_{r_{S,k}}$. Therefore, we
  have 
  \begin{equation*}
    \begin{aligned}
      \expE[F(Y^0_{\cdot \, + h_0})] & = \sum_{k=0}^\infty
      \expE\left[F(X_{\cdot \, + r_{S,k}}) \mathbb{I}_{r_{B,k} <
          r_{A,k}} \mathbb{I}_{k \leq M} \right]\\
      & = \sum_{k=0}^\infty \expE \left[\; \expE[F(X_{\cdot \, +
          r_{S,k}}) \mathbb{I}_{r_{B,k} < r_{A,k}} \mathbb{I}_{k \leq
          M} \mid \mathcal{F}_{r_{S,k}}] \; \right] \\
      & = \sum_{k=0}^\infty \expE \left[\mathbb{I}_{k \leq M} \;
        \expE[F(X_{\cdot \, + r_{S,k}}) \mathbb{I}_{r_{B,k} < r_{A,k}}
        \mid  \mathcal{F}_{r_{S,k}}]\; \right] \\
      & = \sum_{k=0}^\infty \expE \left[\mathbb{I}_{k \leq M} \;
        f(X_{r_{S,k}})\; \right], 
      \end{aligned}
    \end{equation*}
    where 
    \begin{equation*} f(x) = \expE[F(X_{\cdot }) \mathbb{I}_{\tau_{B}
        < \tau_{A}} \mid X_0 = x] = q(x) \wh{\expE}[F(Y_{\cdot})\mid
      Y_0 = x].
    \end{equation*}
    The last equality follows from the Doob $h$-transform (since $x
    \in \partial S \subset \Theta$ here).  Since $q(x) = \epsilon$ for
    all $x \in \partial S$, this means 
    \begin{equation} 
      \expE[F(Y^0_{\cdot \, + h_0})] = \epsilon \, \expE
      \left[ \sum_{k=0}^M \; g(X_{r_{S,k}}) \; \right] \label{fexp1}
    \end{equation} 
  where $g(x) = \wh{\expE}[F(Y_{\cdot} )\mid Y_0 = x]$. Note that the
  random integer $M$ depends on $\epsilon$.

  Let $A_j$ denote the event $\{j < M\}$, which occurs if and only if
  $r_{A,k} < r_{B,k}$ for all $k \in \{0,1,\dots,j\}$. Since $q(x) =
  \epsilon$ for all $x \in \partial S$, the event $A_j$ is independent
  of $X_{r_{S,j}} \in \partial S$. Moreover, $P(A_j) = (1 -
  \epsilon)^{j+1}$, since 
  \begin{equation*}
    \begin{aligned}
      \Pm(A_j) & = \expE\left[ \prod_{k=0}^j \mathbb{I}_{r_{A,k} <
          r_{B,k}} \right]  \\
      & = \expE\left[ \; \prod_{k=0}^{j-1} \mathbb{I}_{r_{A,k} <
          r_{B,k}} \; \expE[ \mathbb{I}_{r_{A,j} < r_{B,j}} \mid
        \mathcal{F}_{r_{S,j}}]\; \right] = (1 - \epsilon)
      \Pm(A_{j-1}).
    \end{aligned}
  \end{equation*}
  Similarly, $\Pm( M = j) = \epsilon (1 - \epsilon)^j$.  Now we
  evaluate \eqref{fexp1}:
  \begin{align*}
    \expE[F(Y^0_{\cdot \, + h_0})] & = \epsilon\,\expE[g(X_{r_{S,0}})]
    + \epsilon  \,\expE \left[ \sum_{k=1}^{M}  \; g(X_{r_{S,k}}) \; \right] \\
    & = \epsilon \,\expE[g(X_{r_{S,0}})] + \epsilon\, \expE \left[
      \sum_{j=0}^\infty \mathbb{I}_{A_j} \; g(X_{r_{S,j+1}})]
      \; \right]  \\
    & = \epsilon \,\expE[g(X_{r_{S,0}})] + \epsilon \sum_{j=0}^\infty
    \expE \left[ \mathbb{I}_{A_j} \; g(X_{r_{S,j+1}}) \;
    \right]  \\
    & = \epsilon \,\expE[g(X_{r_{S,0}})] + \epsilon \sum_{j=0}^\infty
    \Pm(A_j)\,\expE \left[\; g(X_{r_{S,j+1}}) \; \right] \\
    & = \epsilon \,\expE[g(X_{r_{S,0}})] + \epsilon \sum_{j=0}^\infty
    (1 - \epsilon)^{j+1} \expE \left[\; g(X_{r_{S,j+1}}) \; \right] \\
    & = \sum_{j=0}^\infty \epsilon (1 - \epsilon)^{j} \, \expE
    \left[\; g(X_{r_{S,j}}) \; \right]  \\
    & = \sum_{j=0}^\infty \Pm( M = j ) \expE\left[\;
      g(X_{r_{S,j}})\;\right] = \expE\left[\;
      g(X_{\tau_{S,0}})\;\right].
  \end{align*}
  Now let $\epsilon \to 0$. Since $g(x)$ is bounded and is continuous
  up to $\partial A$ by Proposition~\ref{prop:Y0contin}, we have (by
  the dominated convergence theorem)
  \begin{equation}
    \lim_{\epsilon \to 0} \expE[ g(X_{\tau_{S,0}})] = \expE[
    \lim_{\epsilon \to 0} g(X_{\tau_{S,0}})] = \expE[
    g(X_{\tau_{A,0}^-})].  
  \end{equation}
\end{proof}

\section{Reactive Exit and Entrance Distributions} \label{sec:REED}

\begin{proof}[Proof of Lemma \ref{lem:sameMass}]
  The equality \eqref{eq:normetaAetaB} is equivalent to 
  \begin{equation*}
    \int_{\partial \Theta} \rho(x) \wh{n}(x)\cdot a(x) \nabla q(x) 
    \ud \sigma_{\Theta}(x) = 0.
  \end{equation*}
  Using \eqref{eq:Lq}, it is then equivalent to 
  \begin{equation*}
    \avg{\rho, Lq} = \avg{L^{\ast} \rho, q} = 0,
  \end{equation*}
  which is obvious.
\end{proof}

Before proving Proposition~\ref{prop:etaB+}, we will need establish
some properties of the entrance and exit distributions and of the
harmonic measure associated with the generator $L$. These results will
also be used later in the paper. First, using integration by parts, we
have
\begin{lem}\label{lem:green}
  Let $D \subset \Rm^d$ be open with smooth boundary. Let $\phi, \psi
  \in C^2(D) \cap C^1(\wb D)$ and bounded. Then
  \begin{multline}\label{eq:green}
    \int_{D} \rho(x) \bigl(\phi(x) L \psi(x) - \psi(x)
    \wt{L}\phi(x) \bigr) \ud x = \int_{\partial D} \rho(x) b\cdot
    \wh{n}(x) \phi(x) \psi(x) \ud \sigma_{D}(x) \\
    + \int_{\partial D} \rho(x) \phi(x) \wh{n}(x) \cdot a \nabla
    \psi(x) - \psi(x) \wh{n}(x) \cdot \divop (a(x) \rho(x) \phi(x))
    \ud \sigma_{D}(x),
  \end{multline}
  where $\wh{n}(x)$ is the exterior normal vector at $x \in \partial
  D$.
\end{lem}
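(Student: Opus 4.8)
The plan is to derive \eqref{eq:green} by integrating by parts \emph{only} the left-hand integrand $\rho\phi\,L\psi$, transferring all derivatives off $\psi$. Since $\wt L$ is by construction the adjoint of $L$ in $L^2(\Rm^d,\rho\ud x)$, the resulting interior integral will reproduce $\int_D \rho\psi\,\wt L\phi\ud x$, while the boundary contributions generated along the way will be exactly the three surface integrals on the right-hand side. Concretely, I would split $L\psi = \tr(a\nabla^2\psi) + b\cdot\nabla\psi$ and treat the two pieces separately.

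For the first-order piece, a single application of the divergence theorem gives
\[
\int_D \rho\phi\, b\cdot\nabla\psi \ud x = \int_{\partial D} \rho\,(b\cdot\wh{n})\,\phi\psi \ud\sigma_D - \int_D \psi\,\divop(b\rho\phi)\ud x,
\]
which already produces the first boundary term. For the second-order piece I would integrate by parts twice. Applying the divergence theorem to the vector field $\rho\phi\, a\nabla\psi$ yields the boundary term $\int_{\partial D}\rho\phi\,\wh{n}\cdot a\nabla\psi\ud\sigma_D$ and an interior integral $-\int_D \divop(a\rho\phi)\cdot\nabla\psi\ud x$; a second application to $\psi\,\divop(a\rho\phi)$ then produces the boundary term $-\int_{\partial D}\psi\,\wh{n}\cdot\divop(a\rho\phi)\ud\sigma_D$ together with the interior integral $\int_D \psi\sum_{i,j}\partial_i\partial_j(a_{ij}\rho\phi)\ud x$. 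Here the symmetry of $a$ is what lets the two cross terms combine cleanly into the single divergence $\divop(a\rho\phi)$, and because $\phi,\psi\in C^1(\wb D)$ every boundary integrand is continuous on $\partial D$, so the surface integrals are well defined.

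Collecting the two interior integrals gives $\int_D \psi\, L^{*}(\rho\phi)\ud x$, where $L^{*}u = \sum_{i,j}\partial_i\partial_j(a_{ij}u)-\divop(bu)$ is the formal $L^2(\ud x)$-adjoint of $L$ appearing in \eqref{rhodef}. The key step is then the pointwise identity $L^{*}(\rho\phi) = \rho\,\wt L\phi$, which I would verify by Leibniz expansion: grouping the terms in which $\phi$ is left undifferentiated assembles the factor $\phi\, L^{*}\rho$, which vanishes because $\rho$ is the invariant density \eqref{rhodef}; the surviving terms are precisely $2\,\divop(a\rho)\cdot\nabla\phi + \rho\,\tr(a\nabla^2\phi) - \rho\,b\cdot\nabla\phi$, i.e. $\rho\,\wt L\phi$ by the definition \eqref{LtildeDef}. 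Substituting this and rearranging gives \eqref{eq:green}.

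The main obstacle is really this last algebraic cancellation: the three boundary terms drop out automatically from the two integrations by parts, but the interior term collapses to $\int_D\rho\psi\,\wt L\phi\ud x$ \emph{only} because $L^{*}\rho=0$, so the role of the invariant density is essential. A secondary, purely technical point is justifying the divergence theorem when $\phi,\psi$ are merely $C^2(D)\cap C^1(\wb D)$, since $\nabla^2\psi$ need not remain bounded up to $\partial D$; I would handle this by applying the identity first on an interior exhaustion $D_\delta\uparrow D$ with smooth boundary and passing to the limit, using the continuity of the boundary integrands on $\wb D$ to control the surface terms.
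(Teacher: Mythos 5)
Your proof is correct and takes essentially the same approach as the paper, which states the lemma with only the remark that it follows ``using integration by parts'': you simply carry out that computation in full, and your key pointwise identity $L^{*}(\rho\phi)=\rho\,\wt{L}\phi$ (which hinges on the invariance equation $L^{*}\rho=0$ from \eqref{rhodef}) together with the three boundary terms reproduces \eqref{eq:green} exactly, with the exhaustion argument properly handling the $C^2(D)\cap C^1(\wb D)$ regularity.
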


Let us recall some tools from potential theory (see for example the
books \cites{Pinsky:95, Sznitman:98} and also \cites{BoEcGaKl:04,
  BoGaKl:05} where potential theory was applied to analyze diffusion
processes with metastability).  The harmonic measure $H_D(x, \rd y)$
is given by the Poisson kernel corresponding to the boundary value
problem
\begin{equation} \label{ufbvp}
  \begin{cases}
    L u(x) = 0, & x \in D, \\
    u(x) = f(x), & x \in \partial D. 
  \end{cases}
\end{equation}
Therefore, for $f \in C(\partial D)$,
\begin{equation}
  u(x) = \int_{\partial D} H_D(x, \rd y) f(y),
\end{equation}
is the unique solution to \eqref{ufbvp}. Similarly, the harmonic
measure $\wt{H}_D(x, \rd y)$ corresponds to the generator $\wt{L}$
(recall \eqref{LtildeDef}). For the boundary value problem
\begin{equation}
  \begin{cases}
    \wt{L} \wt{u} (x) = 0, & x \in D, \\
    \wt{u}(x) = f(x), & x \in \partial D,
  \end{cases}
\end{equation}
the solution is given by 
\begin{equation}
  \wt{u}(x) = \int_{\partial D} \wt{H}_D(x, \rd y) f(y).
\end{equation}
The harmonic measures have a probabilistic interpretation:
$H_D(x, \rd y)$ (resp. $\wt{H}_D(x, \rd y)$) gives the probability
that the process associated with the generator $L$ (resp. $\wt{L}$)
first strikes the boundary $\partial D$ at $\rd y$ after starting at $x$. In
particular,
\[
q(x) = H_D(x,\partial B) \quad \text{and} \quad \wt{q}(x) =
\wt{H}_D(x,\partial A).
\]

We also define the harmonic measures for the conditioned processes as
\begin{equation}
  H_{\Theta}^q(x, \rd y) = \frac{q(y)}{q(x)} H_{\Theta}(x, \rd y).
\end{equation}
For $x \in \Theta$ this is a measure on $\partial B$. For $x
\in \partial A$ where $q(x) = 0$, we may define $H_{\Theta}^q(x,\rd y)$
through a limit: 
\begin{equation}\label{Hplus1}
  H_{\Theta}^q(x,\rd y) = \lim_{\substack{x' \in \Theta \\ x'
      \to x}} \frac{q(y)}{q(x)} H_{\Theta}(x, \rd y) = \frac{
    \wh{n}(x) \cdot a(x) \nabla_x H_{\Theta}(x,dy)}
  {\wh{n}(x) \cdot a(x) \nabla_x q(x)} ,
  \quad x \in \partial A.  
\end{equation}
Recall that $q(y) = 1$ for $y \in \partial B$.

\medskip

Recall the reactive exit and entrance measures $\eta_A^-$, $\eta_A^+$,
$\eta_B^-$ and $\eta_B^+$. They are connected by harmonic measures as
follows:
\begin{prop} \label{prop:etaAetaBmapH}
  \begin{align}\label{etaAB1} 
    & \int_{\partial A} \eta_A^-(\rd x) H_{\Theta}^q(x, \rd y) =
    \eta_B^+(\rd y). \\
    \label{etaAB3} & \int_{\partial A} \eta_A^+(\rd x)
    H_{\wb{B}^C}(x,\rd y) = \eta_B^+(\rd y). \\
    \label{etaAB6} & \int_{\partial B} \eta_B^+(\rd x)
    H_{\wb{A}^C}(x,\rd y) = \eta_A^+(\rd y).
  \end{align}
\end{prop}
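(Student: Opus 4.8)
The plan is to prove all three identities \eqref{etaAB1}, \eqref{etaAB3}, \eqref{etaAB6} by testing against an arbitrary continuous $f$ on the target boundary, rewriting the left-hand side as a reactive exit/entrance measure applied to the harmonic extension of $f$, and then collapsing everything with the Green-type identity of Lemma~\ref{lem:green}. The recurring mechanism is that $q$ solves $Lq=0$ while $\wt q$ solves $\wt L\wt q=0$, so pairing a boundary-value problem for $L$ against $\wt q$ in Lemma~\ref{lem:green} annihilates the bulk term and leaves only boundary fluxes --- precisely where the densities of $\eta_A^{\pm}$ and $\eta_B^{\pm}$ live.

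For \eqref{etaAB1}, given $f\in C(\partial B)$ I set $u_f(x)=\int_{\partial B}H_\Theta^q(x,\rd y)f(y)$. Since $L^q=\tfrac1q L(q\,\cdot\,)$ by \eqref{Lconditioned}, the function $w=q\,u_f$ solves $Lw=0$ in $\Theta$ with $w=0$ on $\partial A$ and $w=f$ on $\partial B$ (using $q=0$ on $\partial A$, $q=1$ on $\partial B$); hence $w(x)=\int_{\partial B}H_\Theta(x,\rd y)f(y)$ and $u_f=w/q$. The boundary formula \eqref{Hplus1}, which is l'H\^opital's rule applied to $w/q$ at $\partial A$ and is licensed by $\avg{\wh n,a\nabla q}\neq0$ in \eqref{hopf}, then gives $u_f=(\wh n\cdot a\nabla w)/(\wh n\cdot a\nabla q)$ on $\partial A$, so that $\int_{\partial A}\eta_A^-u_f=-\nu^{-1}\int_{\partial A}\rho\,\wh n\cdot a\nabla w\,\ud\sigma_A$. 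I would finish by applying Lemma~\ref{lem:green} on $\Theta$ with $\phi=\wt q$, $\psi=w$: both bulk terms vanish, and inserting the boundary values $\wt q|_{\partial A}=1$, $\wt q|_{\partial B}=0$, $w|_{\partial A}=0$, $w|_{\partial B}=f$ collapses the identity to $\int_{\partial A}\rho\,\wh n\cdot a\nabla w\,\ud\sigma_A=\int_{\partial B}f\,\rho\,\wh n\cdot a\nabla\wt q\,\ud\sigma_B$, which is exactly $\int_{\partial A}\eta_A^-u_f=\int_{\partial B}f\,\eta_B^+$.

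For \eqref{etaAB3}, given $f\in C(\partial B)$ let $v(x)=\int_{\partial B}H_{\wb B^C}(x,\rd y)f(y)$, so $Lv=0$ on $\wb B^C$ and $v=f$ on $\partial B$. Applying Lemma~\ref{lem:green} on $\Theta$ with $\phi=\wt q$, $\psi=v$ and inserting boundary values reduces the desired equality $\int_{\partial A}\eta_A^+v=\int_{\partial B}\eta_B^+f$ to the single flux condition $\int_{\partial A}\bigl(\rho\,b\cdot\wh n\,v+\rho\,\wh n\cdot a\nabla v-v\,\wh n\cdot\divop(a\rho)\bigr)\ud\sigma_A=0$. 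The key point is that $v$ is $L$-harmonic across $\partial A$ because $A\subset\wb B^C$; this last integral is then exactly what Lemma~\ref{lem:green} produces on the bounded domain $A$ with $\phi\equiv1$, $\psi=v$ (noting $\wt L1=0$ and $Lv=0$ in $A$), so it vanishes and \eqref{etaAB3} follows.

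Finally, \eqref{etaAB6} is the image of \eqref{etaAB3} under the interchange $A\leftrightarrow B$, which sends $q\mapsto1-q$, $\wt q\mapsto1-\wt q$, $\eta_A^+\leftrightarrow\eta_B^+$, and $H_{\wb B^C}\mapsto H_{\wb A^C}$. Concretely I would repeat the argument for \eqref{etaAB3} but now pair against $\phi=1-\wt q$ (which solves $\wt L\phi=0$ with $\phi=0$ on $\partial A$, $\phi=1$ on $\partial B$) and, with $v(x)=\int_{\partial A}H_{\wb A^C}(x,\rd y)f(y)$ being $L$-harmonic across $\partial B$, discharge the residual flux on $\partial B$ via Lemma~\ref{lem:green} on the bounded domain $B$ with $\phi\equiv1$. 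I expect the main subtlety to be exactly this bookkeeping: a naive pairing against $\wt q$ in \eqref{etaAB6} would leave the residual flux on $\partial A$, where $v$ is harmonic only on the \emph{unbounded} side $\wb A^C$, forcing a delicate decay-at-infinity argument for the stationary probability flux $\rho b-\divop(a\rho)$; pairing against $1-\wt q$ instead relocates the residual term to $\partial B$ and keeps every integration by parts on a bounded set. The remaining points requiring care are the up-to-the-boundary $C^1$ regularity of $q$, $\wt q$, $w$, and $v$ demanded by Lemma~\ref{lem:green} (which follows from elliptic regularity and the smoothness of $\partial A,\partial B$) and the justification of the boundary trace \eqref{Hplus1}, on which \eqref{etaAB1} hinges.
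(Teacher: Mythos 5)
Your proposal is correct, and it rests on the same mechanism as the paper's proof: test against a continuous $f$ on the target boundary, pass to the relevant $L$-harmonic extension, and apply Lemma~\ref{lem:green} with $\wt q$ or $1-\wt q$ as the pairing function so that both bulk terms vanish and only committor fluxes survive. Your proof of \eqref{etaAB1} coincides with the paper's (your $w$ is the paper's $u_f$, and your l'H\^opital step is exactly the boundary formula \eqref{Hplus1}). The genuine difference is in the bookkeeping for \eqref{etaAB3} and \eqref{etaAB6}. For \eqref{etaAB3} the paper pairs with $\phi = 1-\wt q$, which leaves a residual flux on $\partial B$, and removes it by ``applying \eqref{eq:green} with $\phi\equiv 1$''; since that displayed identity involves only $\partial B$ terms and $\psi$ is $L$-harmonic only on $\wb B^C$, the application is, as written, on the unbounded domain $\wb B^C$ and so tacitly uses decay of $\rho$ and $\divop(a\rho)$ at infinity. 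You pair instead with $\phi = \wt q$, which puts the residual flux on $\partial A$, where $v$ extends $L$-harmonically into the bounded set $A$ (as $A\subset \wb B^C$); Lemma~\ref{lem:green} on $A$ with $\phi\equiv 1$ then kills it, so every integration by parts stays on a bounded domain (the opposite orientations of $\wh n$ on $\partial A$ are harmless since the flux in question vanishes). Symmetrically, for \eqref{etaAB6} --- whose proof the paper omits as ``analogous'' --- your choice of $\phi = 1-\wt q$ with the residual flux discharged on the bounded set $B$ is the correct mirror bookkeeping, and your warning that the naive role-switched pairing would instead force an unbounded-domain argument on $\wb A^C$ is accurate. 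In short: same route as the paper, but your arrangement of \eqref{etaAB3} and \eqref{etaAB6} is somewhat more careful than the paper's own write-up, which is a modest gain in rigor rather than a new idea.
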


\begin{proof}
  We prove \eqref{etaAB1} first. If $f \in C(\partial B)$, let
  $u_f(x)$ solve $Lu = 0$ in $\Theta$ with 
  \begin{equation}\label{ufBC}
    u = \begin{cases}
      f(x), & x \in \partial B,  \\ 
      0, & x \in \partial A.
    \end{cases}
  \end{equation}
  Hence $u(x) \wt{q}(x) = 0$ on $\partial \Theta$. By applying
  \eqref{eq:green} with $\phi(x) = \wt{q}(x)$ and $\psi(x) = u_f(x)$,
  we obtain 
  \begin{equation}\label{fetaB}
    \begin{aligned}
      \int_{\partial A} \rho(x) \wh{n}(x) \cdot a(x) \nabla u_f(x) \ud
      \sigma_{A}(x) & = \int_{\partial B} f(x) \wh{n}(x) \cdot \divop
      (a(x) \rho(x) \wt{q}(x)) \ud \sigma_{B}(x)  \\
      & = \int_{\partial B} f(x) \rho(x) \wh{n}(x) \cdot a(x)
      \nabla \wt{q}(x) \ud \sigma_{B}(x) \\
      & = - \int_{\partial B} f(x) \eta_B^+(dx).
    \end{aligned}
  \end{equation}
  From \eqref{Hplus1} and \eqref{eq:etaA-}, we see that for all $x
  \in \partial A$,
  \[
  \int_{\partial A} \eta_A^-(\rd x) H_{\Theta}^q(x,\rd y) = -
  \int_{\partial A} \rho(x)\wh{n}(x) \cdot a(x) \nabla_x
  H_{\Theta}(x,\rd y) \ud \sigma_{A}(x).
  \]
  Hence for any $f \in C(\partial B)$, we have
  \begin{align*}
    \int_{\partial B} \left( \int_{\partial A} \eta_A^-(\rd x)
      H^q_{\Theta}(x,\rd y)\right) f(y) & = - \int_{\partial B}
    \int_{\partial A} \rho(x)\wh{n}(x) \cdot a(x) \nabla_x \left( f(y)
      H_{\Theta}(x,\rd y)\right) \ud \sigma_{A}(x) \\
    & = - \int_{\partial A} \rho(x)\wh{n}(x) \cdot a(x) \nabla_x
    \left(\int_{\partial B}  H_{\Theta}(x,\rd y)f(y) \right) \rd \sigma_{A}(x) \\
    & = - \int_{\partial A} \rho(x)\wh{n}(x) \cdot a(x) \nabla_x
    u_f(x) \ud x.
  \end{align*}
  Combining this with \eqref{fetaB}, we conclude that
  \[
  \int_{\partial B} \left( \int_{\partial A} \eta_A^-(\rd x)
    H^q_{\Theta}(x,\rd y)\right) f(y) = \int_{\partial B} f(x)
  \eta_B^+(\rd x), \quad \forall \; f \in C(\partial B),
  \]
  which proves \eqref{etaAB1}.

  \smallskip

  To prove \eqref{etaAB3}, let $\psi$ solve $L \psi = 0$ for $x \in
  \wb{B}^C$ with $\psi = f$ on $\partial B$. Then by \eqref{eq:green}
  with $\phi = 1 - \wt{q}$, we have
  \begin{equation*}
    \begin{aligned}
      \int_{\partial A} \eta_A^+(\rd x) \psi(x) & = \int_{\partial A}
      \rho(x) \wh{n}(x) \cdot a(x) \nabla \wt{q}(x) \psi(x)
      \ud \sigma_A(x) \\
      & = - \int_{\partial A} \rho(x) \wh{n}(x) \cdot a(x) \nabla (1 -
      \wt{q}(x)) \psi(x)
      \ud \sigma_A(x) \\
      & = - \int_{\partial A} \psi(x) \wh{n}(x) \cdot \divop( a \rho
      (1 - \wt{q})) \ud \sigma_A(x) \qquad \bigl(\text{since }
      1 - \wt{q} = 0 \text{ on } \partial A \bigr)\\
      & = \int_{\partial B} f \wh{n} \cdot \divop( a \rho (1 -
      \wt{q})) \ud \sigma_B(x) -
      \int_{\partial B} f \rho b \cdot \wh{n}  \ud \sigma_B(x) \\
      & \qquad\qquad - \int_{\partial B} \rho \wh{n} \cdot a \nabla
      \psi \ud \sigma_B(x).
    \end{aligned}
  \end{equation*}
  Applying \eqref{eq:green} with the function $\phi \equiv 1$, we also
  find that
  \begin{equation*}
    0 = - \int_{\partial B} f \wh{n}
    \cdot \divop( a \rho) \ud \sigma_B(x) + \int_{\partial B} f
    \rho b \cdot \wh{n} \ud \sigma_B(x) + \int_{\partial B} \rho
    \wh{n} \cdot a \nabla \psi \ud \sigma_B(x).  
  \end{equation*}
  Therefore, since $1 - \wt{q} = 1$ on $\partial B$, we conclude that
  \begin{equation*}
    \begin{aligned}
      \int_{\partial A} \eta_A^+(\rd x) \psi(x) & = \int_{\partial B}
      f \wh{n} \cdot \divop( a \rho (1 - \wt{q})) \ud \sigma_B(x) -
      \int_{\partial B} f \wh{n}(x) \cdot \divop( a \rho)
      \ud \sigma_B(x) \\
      & = \int_{\partial B} f \rho \wh{n}\cdot a\nabla (1 - \wt{q})\ud
      \sigma_B(x) \\
      & = - \int_{\partial B} f \rho \wh{n}\cdot a\nabla \wt{q}\ud
      \sigma_B(x) = \int_{\partial A} f \eta_B^+(\rd x).
    \end{aligned}
  \end{equation*}
  We arrive at \eqref{etaAB3} noting that
  \begin{equation*}
    \psi(x) = \int_{\partial B} H_{\wb{B}^C}(x, \rd y) f(y). 
  \end{equation*}

  We omit the proof of \eqref{etaAB6} which is analogous to that of
  \eqref{etaAB3} by switching the role of $A$ and $B$.
\end{proof}

By combining \eqref{etaAB3} and \eqref{etaAB6} we immediately obtain
the following:
\begin{cor}\label{cor:invB} 
  Let $P_B(x,\rd y)$ be the probability transition kernel
  \[
  P_B(x,\rd y) = \int_{\partial A} H_{\wb{A}^C}(x,\rd
  z)H_{\wb{B}^C}(z,\rd y), \quad x,y \in \partial B
  \]
  on $\partial B$, and let $P_A(x,\rd y)$ be the probability transition kernel
  \[
  P_A(x,\rd y) = \int_{\partial B} H_{\wb{B}^C}(x,\rd z)
  H_{\wb{A}^C}(z,\rd y), \quad x,y \in \partial A
  \]
  on $\partial A$. Then
  \[
  \int_{x \in \partial B} \eta_B^+(\rd x) P_B(x,\rd y) = \eta_B^+(\rd y).
  \] 
  and
  \[
  \int_{x \in \partial A} \eta_A^+(\rd x) P_A(x,\rd y) = \eta_A^-(\rd y).
  \] 
  That is, $\eta_B^+$ and $\eta_A^+$ are invariant under $P_B$ and
  $P_A$, respectively.
\end{cor}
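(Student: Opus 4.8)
The plan is to obtain both invariance identities by directly composing the two transport relations established in Proposition~\ref{prop:etaAetaBmapH}, namely \eqref{etaAB3} and \eqref{etaAB6}. As the corollary is advertised as ``immediate,'' no new analytic machinery is required; the proof is essentially bookkeeping about which harmonic measure acts on which boundary, together with one application of Tonelli's theorem to exchange the order of integration.

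First I would treat the statement for $P_B$. Unwinding the definition of the kernel and integrating against $\eta_B^+$ produces a double integral over $\partial A$ and $\partial B$,
\[
\int_{\partial B} \eta_B^+(\rd x)\, P_B(x,\rd y)
= \int_{\partial B}\!\int_{\partial A} \eta_B^+(\rd x)\, H_{\wb{A}^C}(x,\rd z)\, H_{\wb{B}^C}(z,\rd y).
\]
Since $H_{\wb{A}^C}$ and $H_{\wb{B}^C}$ are probability transition kernels and $\eta_B^+$ is a finite measure, all integrands are nonnegative with finite total mass, so Tonelli's theorem lets me integrate first over $x \in \partial B$. The inner integral $\int_{\partial B} \eta_B^+(\rd x)\, H_{\wb{A}^C}(x,\rd z)$ is exactly the left-hand side of \eqref{etaAB6}, hence equals $\eta_A^+(\rd z)$. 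Substituting, what remains is $\int_{\partial A} \eta_A^+(\rd z)\, H_{\wb{B}^C}(z,\rd y)$, which is the left-hand side of \eqref{etaAB3} and therefore equals $\eta_B^+(\rd y)$. This establishes $\eta_B^+ P_B = \eta_B^+$.

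The statement for $P_A$ is entirely symmetric. Writing $P_A$ as the composition $\int_{\partial B} H_{\wb{B}^C}(x,\rd z)\, H_{\wb{A}^C}(z,\rd y)$ and integrating against $\eta_A^+$, I would again invoke Tonelli to integrate first over $x \in \partial A$; the inner integral $\int_{\partial A} \eta_A^+(\rd x)\, H_{\wb{B}^C}(x,\rd z)$ equals $\eta_B^+(\rd z)$ by \eqref{etaAB3}, and then $\int_{\partial B} \eta_B^+(\rd z)\, H_{\wb{A}^C}(z,\rd y) = \eta_A^+(\rd y)$ by \eqref{etaAB6}. This yields the invariance of $\eta_A^+$ under $P_A$.

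There is essentially no genuine obstacle here: the only point that requires a word of justification is the interchange of integration, which is immediate from nonnegativity together with the finiteness of $\eta_A^+$ and $\eta_B^+$ and the fact that the compositions of harmonic measures are honest transition kernels on $\partial A$ and $\partial B$. The one thing worth keeping straight is the roles of the two harmonic measures---$H_{\wb{A}^C}$ sending a point to its first-hitting distribution on $\partial A$, and $H_{\wb{B}^C}$ doing likewise on $\partial B$---so that $P_B$ really represents an excursion from $\partial B$ to $\partial A$ and back, and one applies \eqref{etaAB6} and then \eqref{etaAB3} in that order (and the reverse order for $P_A$).
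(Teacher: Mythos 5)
Your proof is correct and is essentially the paper's own argument: the paper gives no proof beyond the remark that the corollary is obtained ``by combining \eqref{etaAB3} and \eqref{etaAB6},'' which is exactly your composition of the two transport identities, with Tonelli justifying the interchange and the identities applied in the appropriate order for each kernel. One point worth flagging: your computation yields $\int_{\partial A} \eta_A^+(\rd x)\, P_A(x,\rd y) = \eta_A^+(\rd y)$, whereas the corollary's second display as printed reads $\eta_A^-(\rd y)$; your version is the correct one---the $\eta_A^-$ there is a typographical slip in the paper, as confirmed both by the corollary's own closing sentence (that $\eta_A^+$ is invariant under $P_A$) and by the form in which the identity is invoked in the proof of Theorem~\ref{theo:empirical}.
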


We are ready to return to the proof of Proposition~\ref{prop:etaB+}. 
\begin{proof}[Proof of Proposition~\ref{prop:etaB+}]
  
  We first verify that $\eta_B^+$ is a probability measure. Taking
  $\psi = q$ and $\phi = \wt{q}$ in \eqref{eq:green}, we obtain using
  the boundary conditions of $q$ and $\wt{q}$ on $\partial A$ and
  $\partial B$,
  \begin{equation*}
    \begin{aligned}
      \eta_{A}^-(\partial A) = \frac{1}{\nu} \int_{\partial_A} \rho
      \wh{n} \cdot a \nabla q \ud \sigma_A & = \frac{1}{\nu}
      \int_{\partial B} \wh{n} \cdot \divop(a\rho \wt{q})
      \ud \sigma_B \\
      & = \frac{1}{\nu} \int_{\partial B} \wh{n} \cdot a \rho \nabla
      \wt{q} \ud \sigma_B = \eta_{B}^+(\partial B). 
    \end{aligned}
  \end{equation*}
  This shows that $\eta_B^+(\partial B) = 1$ and $\nu$ is the correct
  normalization constant.

  \medskip

  Let $g$ be a positive continuous function on $\partial B$. Define
  for $x \not \in \wb{B}$, 
  \begin{equation}
    u(x) = \Em \left[ g(X_{\tau_B})  \mid X_0 = x \right]. 
  \end{equation}
  Hence $u$ satisfies the equation 
  \begin{equation}
    \begin{cases}
      L u(x) = 0, & x \in \wb{B}^c; \\
      u(x) = g(x), & x \in \partial B.
    \end{cases}
  \end{equation}
  Let $H_{\wb{B}^c}(x, \rd y)$ be the harmonic measure (the measure of
  the first hitting point on $\wb{B}$ for the process starting at
  $x$). We have
  \begin{equation}
    u(x) = \int_{\partial B} H_{\wb{B}^c}(x, \rd y) g(y). 
  \end{equation}
  By the maximum principle, $u > 0$ in $\wb{B}^C$. By the Harnack
  inequality and the compactness of $\partial A$, we have
  \begin{equation}
    \sup_{x \in \partial A} u(x) \leq C \inf_{x \in \partial A} u(x)
  \end{equation}
  where the constant $C > 0$ only depends on the elliptic constants of $a$; in
  particular, $C$ is independent of $g$. Therefore, we obtain for any
  $x, x' \in \partial A$, $y \in \partial B$
  \begin{equation}
    0 < C^{-1} \leq \frac{H_{\wb{B}^c}(x, \rd y)}{H_{\wb{B}^c}(x', \rd y)} 
    \leq C < \infty. 
  \end{equation}
  If we define 
  \begin{equation}
    \nu(\rd y) = \inf_{x \in \partial A} H_{\wb{B}^c}(x, \rd y), 
  \end{equation}
  then $\nu(\rd y) > 0$ on $\partial B$ and 
  \begin{equation}\label{eq:minor}
    H_{\wb{B}^c}(x, \rd y) \geq C^{-1} \nu(\rd y)
  \end{equation}
  for any $x \in \partial A$.

  Consider the Markov chain given by $\{X_{\tau_{B, k}^+}\}_{k=0}^\infty$ on $\partial
  B$. Let $P_B$ denote its transition kernel, given by
  \begin{equation}
    P_B(y, \rd y') = \int_{\partial A} H_{\wb{A}^c}(y, \rd x) H_{\wb{B}^c}(x, \rd y'). 
  \end{equation}
  By (\ref{eq:minor}), $P_B$ satisfies Doeblin's condition:
  \begin{equation}
    P_B(y, \rd y') \geq
      C^{-1} \int_{\partial A} H_{\wb{A}^c}(y, \rd x) \nu(\rd y) 
      = C^{-1} \nu(\rd y). 
  \end{equation}
  Therefore, $P_B$ has a unique invariant measure. By
  Corollary~\ref{cor:invB}, this invariant measure is given by
  $\eta_B^+$. The convergence in Proposition~\ref{prop:etaB+} now
  follows (see e.g.~\cite{MeTw:09}).
\end{proof}

\medskip

\begin{proof}[Proof of Theorem \ref{theo:empirical}]
  Consider the family of processes
  \[
  X^{A,n}_t = X_{(t + \tau_{A,n}^+) \wedge \tau_{B,n}^+}.
  \]
  Observe that the $n^{th}$ reactive trajectory $t \mapsto Y^n_t$ is a
  subset of the path $t \mapsto X^{A,n}_t$; specifically, $Y^n_t =
  X^{A,n}_{t + \tau_{A,n}^- - \tau_{A,n}^+}$ for all $t \geq 0$. The
  random sequence of points
  \[
  y_n = X^{A,n}_0 = X_{\tau_{A,n}^+} \in \partial A, \quad n = 0,1,2,\dots
  \]
  corresponds to a Markov chain on the state space $\partial A$ with
  transition kernel
  \[
  P_A(x,\rd y) = \Pm(y_{n+1} \in \rd y \mid y_n = x) = \int_{\partial
    B} H_{\wb{B}^C}(x,\rd z) H_{\wb{A}^C}(z,\rd y).
  \]  
  As shown in the proof of Proposition~\ref{prop:etaB+}, this chain
  has a unique invariant probability distribution $\eta_A^+$ supported
  on $\partial A$:
  \[
  \int_{\partial A} \eta_A^+(\rd x) P_A(x,\rd y) = \eta_A^+(\rd y).
  \]

  The sequence of processes $t \mapsto X^{A,n}_t$ corresponds to a
  Markov chain on the metric space $\mathcal{X} = C([0,\infty))$.  It
  can be shown that this is a Harris chain with unique invariant
  distribution
  \[
  \wb{\mathcal{P}}(U) = \int_{\partial A} \eta_A^+(\rd x)
  \mathcal{P}_x(U), \quad \forall \;U \in \mathcal{B}
  \]
  where $\mathcal{P}_x$ denotes the law on $(\mathcal{X},\mathcal{B})$
  of the process $t \mapsto Z_{t \wedge \tau_{B}}$ where
  \[
  \ud Z_t = b(Z_t) \ud t + \sqrt{2}\, \sigma(Z_t)\ud W_t, \quad Z_0 = x
  \]
  and $\tau_{B}$ is the first hitting time of $Z_t$ to $\wb{B}$. (The
  uniqueness of $\wb{\mathcal{P}}$ follows from the uniqueness of
  $\eta_A^+$ as an invariant distribution for the chain defined by
  transition kernel $P_A$ on $\partial A$.) Therefore (see
  e.g.~\cite{MeTw:09}), for any $\Phi \in
  L^1(\mathcal{X},\mathcal{B},\wb{\mathcal{P}})$ the limit 
  \begin{equation}
    \lim_{N
      \to \infty} \frac{1}{N} \sum_{k=1}^N \Phi(X^{A,k}) = \expE[ \Phi(
    Z_{\cdot \wedge \tau_{B}}) \mid Z_0 \sim \eta_A^+] \label{Philim}
  \end{equation}
  holds $\Pm$-almost surely.

  Using \eqref{Philim} we will establish the following relationship
  between $\eta_A^-$ and $\eta_A^+$:

  \begin{lem} \label{lem:etaminusplus} Let $X_t$ satisfy the SDE
    (\ref{XSDE}) with initial distribution $X_0 \sim \eta_A^+$ on
    $\partial A$. Then for any Borel set $U \subset \partial A$,
    \[
    \Pm( X_{\tau_{A,0}^-} \in U \mid X_0 \sim \eta_A^+) =
    \eta_{A}^-(U) = - \frac{1}{\nu} \int_{U} \rho(x) \wh{n}(x) \cdot
    a(x) \nabla q(x) \ud \sigma_A(x).
    \]
  \end{lem}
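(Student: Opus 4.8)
The plan is to compute the law of the exit point $X_{\tau_{A,0}^-}$ directly, using a last-exit decomposition that is regularized on the level set $\{q=\epsilon\}$ exactly as in the proof of Theorem~\ref{theo:XtauAmin}. Write $\mu_{\mathrm{ex}}$ for the law of $X_{\tau_{A,0}^-}$ when $X_0\sim\eta_A^+$. By the strong Markov property and the definition of the last exit time, conditionally on $X_{\tau_{A,0}^-}=z\in\partial A$ the remaining segment of the trajectory is the process issued from $z$ and conditioned to reach $\wb B$ before returning to $\wb A$, i.e.\ the Doob $h$-transform of Theorem~\ref{theo:uniqueYAB}; in particular its endpoint $X_{\tau_{B,0}^+}$ on $\partial B$ is distributed as $H_\Theta^q(z,\rd y)$. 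Hence $\mu_{\mathrm{ex}}\,H_\Theta^q=\mathrm{Law}(X_{\tau_{B,0}^+})$, whereas $\eta_A^-\,H_\Theta^q=\eta_B^+$ by \eqref{etaAB1}. This comparison explains why the answer ought to be $\eta_A^-$, but since the transport $\mu\mapsto\mu\,H_\Theta^q$ from $\partial A$ to $\partial B$ need not be injective, I would not conclude by cancellation and instead pin down $\mu_{\mathrm{ex}}$ itself.

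To do so, fix small $\epsilon>0$, set $S=\{x\in\Theta\mid q(x)<\epsilon\}\cup\wb A$ so that $\partial S=\{q=\epsilon\}$, and replace the last exit from $\wb A$ by the last crossing of $\partial S$ before $\tau_{B,0}^+$. Since $q\equiv\epsilon$ on $\partial S$, each excursion off $\partial S$ reaches $\wb B$ before $\wb A$ with probability exactly $\epsilon$, independently of its starting point and of the past, so the number of failed excursions before the successful one is geometric, just as in Theorem~\ref{theo:XtauAmin}. Summing the strong-Markov contributions of the failed excursions produces a renewal series for the law $\pi_\epsilon$ on $\partial S$ of the starting point of the successful excursion; using the $P_A$-invariance of $\eta_A^+$ (Corollary~\ref{cor:invB}) together with the transport identities \eqref{etaAB3} and \eqref{etaAB6}, I would identify $\pi_\epsilon$ with the stationary crossing measure of the $\partial S$ excursion chain, whose uniqueness follows from a Doeblin argument as in the proof of Proposition~\ref{prop:etaB+}. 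Because $\partial S$ has Hausdorff distance $O(\epsilon)$ from $\partial A$ (by \eqref{hopf}) the start of the successful excursion converges to $X_{\tau_{A,0}^-}$, so for $f\in C(\partial A)$ one gets $\int f\,\rd\mu_{\mathrm{ex}}=\lim_{\epsilon\to0}\int f\,\rd\pi_\epsilon$.

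The main obstacle is this final limit: identifying $\lim_{\epsilon\to0}\pi_\epsilon$ with $\eta_A^-=-\nu^{-1}\rho\,\wh n\cdot a\nabla q\,\rd\sigma_A$. The statement is that the stationary flux of reactive paths through the level set $\{q=\epsilon\}$ localizes on $\partial A$ with boundary density proportional to the Hopf flux $\wh n\cdot a\nabla q$, the weight $\rho$ entering through the initial distribution $\eta_A^+$ via \eqref{etaAB3}. Making this rigorous requires uniform control, as $\epsilon\to0$, of the harmonic measure $H_\Theta(x,\rd y)$ and of its conormal derivative $\wh n\cdot a\nabla_x H_\Theta(x,\rd y)$ as $x\to\partial A$; this is precisely where the nondegeneracy \eqref{hopf} and the $C^3$ boundary straightening used in Theorem~\ref{theo:uniqueYAB} are needed to tame the singularity of the drift at $\partial A$. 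As a consistency check independent of these estimates, the identity can be read off from time reversal: by Haussmann--Pardoux the reversed process has generator $\wt L$, and its backward committor for $B\to A$ transitions is $q$, so the forward exit law on $\partial A$ equals the reversed reactive entrance law on $\partial A$, which by the same boundary-flux formula that defines $\eta_B^+$ equals $-\nu^{-1}\rho\,\wh n\cdot a\nabla q\,\rd\sigma_A=\eta_A^-$ as in \eqref{eq:etaA-}.
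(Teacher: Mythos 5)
Your regularization on the level set $\{q=\epsilon\}$ is the same device the paper uses, but the two steps that carry all the weight are left open, and the first is sketched in a way that does not work. The identification of $\pi_\epsilon$ (the law of the start of the successful excursion when $X_0\sim\eta_A^+$) with the stationary measure of the $\partial S$-crossing chain cannot be obtained by ``summing the strong-Markov contributions of the failed excursions.'' While each excursion succeeds with probability exactly $\epsilon$ independently of its starting point and the past, the position of the \emph{next} crossing point is not independent of whether the current excursion succeeded: a successful excursion is routed through $\wb{B}$ and re-enters $\wb{A}$ from the $B$ side, a failed one is not, so conditioning on failure replaces the crossing kernel by a failure-conditioned kernel. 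Your renewal series is therefore a resolvent of that conditioned kernel started from the law $\eta_A^+$ pushed forward by exit from $S$, and equating it with the invariant measure of the unconditioned crossing chain is essentially the statement to be proved. Note also that $\eta_A^+$ is invariant for the chain of entrances to $A$ that follow visits to $B$ (kernel $P_A$), not for the chain of entrances that follow crossings of $\partial S$, so \eqref{etaAB3} and \eqref{etaAB6} do not close this gap. The paper avoids all of this by running two ergodic theorems in tandem: \eqref{Philim} (Harris ergodicity of the $A\to B$ segment chain, which \emph{is} stationary under $\eta_A^+$) turns $\expE[f(X_{\tau_{S,0}})\mid X_0\sim\eta_A^+]$ into a long-run average over successful crossings, and \eqref{fYsum} evaluates that average as $\int f\,\rd\zeta_S$ using only the crossing-chain ergodic theorem plus the constancy of the success probability on $\{q=\epsilon\}$; it then identifies $\zeta_S$ explicitly, via Corollary~\ref{cor:invB} applied to the pair $(A,\wb{S}^C)$, as $-(\epsilon/\nu)\,\rho\,\wh{n}\cdot a\nabla\wt{q}_S\,\ud\sigma_S$.

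Second, what you yourself call ``the main obstacle''---the limit $\pi_\epsilon\to\eta_A^-$---is the analytic heart of the lemma, and you do not prove it; since the flux density $-\nu^{-1}\rho\,\wh{n}\cdot a\nabla q$ enters your argument nowhere else, nothing is established without it. The paper carries out this step concretely: a Green's identity reduces $\int_{\partial S}f\,\rd\zeta_S$ to $-(\epsilon/\nu)\int_{\partial A}\rho\,\wh{n}\cdot a\nabla u_{f,S}\,\ud\sigma_A$, where $Lu_{f,S}=0$ in $S\setminus\wb{A}$ with data $f$ on $\partial S$ and $0$ on $\partial A$; then $z_{f,S}=\epsilon\,u_{f,S}/q$ is $L^q$-harmonic with data $f$ on $\partial S$, the boundary Harnack principle makes $z_{f,S}$ bounded and H\"older continuous up to $\partial A$, which forces $\nabla u_{f,S}\to\epsilon^{-1}z_{f,S}\nabla q$ on $\partial A$, whence $\int_{\partial S}f\,\rd\zeta_S=\int_{\partial A}z_{f,S}\,\rd\eta_A^-\to\int_{\partial A}f\,\rd\eta_A^-$; this is \eqref{StoA2}. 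Your closing time-reversal remark is the one genuinely different idea---Proposition~\ref{prop:etaB+} applied to the $\wt{L}$-process, whose relevant committor is $q$, would yield convergence of the empirical exit distribution on $\partial A$ to $\eta_A^-$ without any boundary Harnack argument---but you present it only as a consistency check, and to promote it to a proof you would still need the stationary two-sided setup that legitimizes the Haussmann--Pardoux reversal, the pathwise matching of forward exit points with reversed entrance points, and then the \eqref{Philim} step to convert that empirical statement into the assertion about the law of $X_{\tau_{A,0}^-}$ conditional on $X_0\sim\eta_A^+$, which is what the lemma actually claims.
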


  \begin{proof}[Proof of Lemma \ref{lem:etaminusplus}.]
    Let $f \in C(\Rm^d)$ be bounded and non-negative. Then by applying
    (\ref{Philim}) to the functional $\Phi(X) = f(X_{\tau_{S,0}^-})$,
    we obtain
    \begin{align*}
      \lim_{\epsilon \to 0} \lim_{N \to \infty} \frac{1}{N}
      \sum_{n=0}^{N-1} f(X_{\tau_{S,n}})
      & =  \lim_{\epsilon \to 0} \expE[ f(X_{\tau_{S,0}}) \mid X_0 \sim \eta_A^+ ]  \\
      & = \expE[ f(X_{\tau_{A,0}^-}) \mid X_0 \sim \eta_A^+ ].
    \end{align*}
    We also have,
    \begin{equation}\label{fYsum}
      \begin{aligned}
        \lim_{N \to \infty} \frac{1}{N} \sum_{n=0}^{N-1}
        f(X_{\tau_{S,n}}) & = \lim_{K \to \infty} \frac{K}{N_K}
        \lim_{K \to \infty} \frac{1}{K} \sum_{k = 0}^{K-1}
        f(X_{r_{S,k}}) \mathbb{I}_{r_{B,k} < r_{A,k}} = \int_{\partial
          S} f(x)\zeta_S(\rd x),
      \end{aligned}
    \end{equation}
    holds $\Pm$-almost surely, where $N_K = |\{ k \in \{0,1,\dots,
    K-1\} \mid r_{B,k} < r_{A,k} \}|$. Here we have used $\zeta_S$ to
    denote the unique invariant distribution (identified below) for
    the Markov chain defined by $X_{r_{S,k}}$ on $\partial
    S$. Therefore,
    \[
    \expE\,[ f(X_{\tau_{A,0}^-}) \mid X_0 \sim \eta_A^+ ] =
    \lim_{\epsilon \to 0} \int_{\partial S} f(x) \zeta_S(\rd x).
    \]

    We claim that if $f(x)$ is uniformly continuous in a neighborhood
    of $\partial A$, then
    \begin{equation}\label{StoA2}
      \lim_{\epsilon \to 0} \int_{\partial S} \zeta_S(\rd x)  
      f(x) =  \int_{\partial A} \eta_A^-(\rd x) f(x).
    \end{equation}
    First, let us identify the invariant distribution $\zeta_S$. By
    applying Corollary~\ref{cor:invB} (replacing $B$ by $\wb{S}^C$) we
    can identify $\zeta_S$ as
    \[
    \zeta_S(\rd x)\; ( = \eta_S^+(\rd x) ) = - \frac{\epsilon}{\nu}
    \rho(x) \wh{n}(x) \cdot a(x) \nabla \wt{q}_S(x) \ud \sigma_S(x),
    \]
    where $\wh{n}(x)$ is the exterior normal at $x \in \partial S$,
    and $\wt{q}_S$ satisfies $\wt{L} \wt{q}_S = 0$ in $S$ with
    \[
    \wt{q}_S(x) = 
    \begin{cases}
      1, & x \in \partial A \\
      0, & x \in \partial S.
    \end{cases}
    \]
    Note that $\nu$ is independent of $\epsilon$. Let $\delta >
    \epsilon$ be small, and suppose that $f(x)$ is continuous on the
    closed set $\{ x \in \wb{\Theta} \mid 0 \leq q(x) \leq \delta
    \}$. (This set contains both $\partial A$ and $\partial S$). A
    computation similar to \eqref{fetaB} (replacing $B$ by $S$) shows
    that for any such function, we have 
    \begin{equation} 
      \int_{\partial S} \zeta_S(\rd x) f(x) = -
      \frac{\epsilon}{\nu}\int_{\partial A} \rho(x) \wh{n}(x) \cdot
      a(x) \nabla u_{f,S}(x) \ud \sigma_A(x), \label{ufSint}
    \end{equation}
    where $u_{f,S}$ satisfies $L u = 0$ in $S \setminus \wb{A}$, and
    \[
    u_{f,S}(x) =
    \begin{cases}
      f(x), & x \in \partial S \\
      0, & x \in \partial A.
    \end{cases}
    \]
    Since $f \geq 0$, we have $u > 0$ in $S \setminus \wb{A}$. Now,
    let us define
    \[
    z_{f,S}(x) = \epsilon \frac{u_{f,S}(x)}{q(x)}, \quad x \in \wb{S}
    \setminus A,
    \]
    which satisfies $L^q z = 0$ in $S \setminus \wb{A}$, with $z = f$
    on $\partial S$ (recall that $q(x) = \epsilon$ for all $x
    \in \partial S$).  By the boundary Harnack inequality
    (\cites{Bau:84, CS:05}), $z_{f,S}(x)$ is bounded and
    H\"older continuous on $\wb{S} \setminus A$ (including $\partial
    A$). We claim that for any $x_0 \in \partial A$, we have
    \begin{equation}\label{zgradlim}
      \lim_{x \to x_0} \nabla u_{f,S}(x) = \epsilon^{-1} z_{f,S}(x_0) \nabla q(x_0). 
    \end{equation}
    Since $\nabla u_{f,S}$, $\nabla q$, and $z_{f,S}$ are continuous
    up to $\partial A$, this is true if and only if
    \[
    \lim_{x \to x_0} q(x) \nabla z_{f,S}(x) = 0.
    \]
    Suppose $q(x) \nabla z_{f,S}(x) \to v \neq 0$ as $x \to x_0
    \in \partial A$. Then we must have
    \[
    \lim_{x \to x_0} \nabla u_{f,S}(x) - z_{f,S}(x) \nabla q(x) = v
    \]
    so that $v$ must be a multiple of $\wh{n}(x_0)$ (since $u$ and $q$
    vanish on $\partial A$).  Thus, we would have 
    \begin{equation}
      \wh{n}(x_0)\cdot
      \nabla z_{f,S}(x) \sim (\wh{n}(x_0) \cdot v)
      q(x)^{-1} \label{zblowup} 
    \end{equation} 
    as $x \to x_0 \in \partial A$. If $v \neq 0$, then $(\wh{n}(x_0)
    \cdot v) \neq 0$, so (\ref{zblowup}) and the fact that $q = 0$ on
    $\partial A$ would contradict the boundedness of
    $z_{f,S}(x)$. Therefore, \eqref{zgradlim} must hold.

    Combining \eqref{ufSint} and \eqref{zgradlim} we obtain
    \[
    \int_{\partial S} \zeta_S(\rd x) f(x) = -
    \frac{1}{\nu}\int_{\partial A} \rho(x) \wh{n}(x) \cdot a(x) \nabla
    q(x) z_{f, S}(x) \ud \sigma_A(x) = \int_{\partial A} \eta_A^-(\rd
    x) z_{f,S}(x).
    \]
    Therefore, as $\epsilon \to 0$,
    \begin{equation}\label{Slimf}
      \lim_{\epsilon \to 0} \int_{\partial S} \zeta_S(\rd x) f(x) =
      \lim_{\epsilon \to 0} \int_{\partial A} \eta_A^-(\rd x) z_{f,S}(x) =
      \int_{\partial A} \eta_A^-(\rd x) f(x).   
    \end{equation} 
    This establishes (\ref{StoA2}) and completes the proof of Lemma
    \ref{lem:etaminusplus}.
  \end{proof}

  Now we continue with the proof of Theorem \ref{theo:empirical}. We
  will apply Theorem \ref{theo:XtauAmin}. Suppose that $F \in
  L^1(\mathcal{X},\mathcal{B},\mathcal{Q}_{\eta_A^-})$, and define the
  functional
  \[
  \Phi(X) = F(X_{(\cdot + \tau_{A,0}^-) \wedge \tau_{B,0}^+}).
  \]
  Combining Theorem \ref{theo:XtauAmin} and Lemma
  \ref{lem:etaminusplus} we see that $\Phi \in
  L^1(\mathcal{X},\mathcal{B},\wb{\mathcal{P}})$, since
  \begin{align*}
    \wb{\mathcal{P}}( \Phi(X) > \alpha) & = \Pm( \Phi(X) > \alpha
    \mid X_0 \sim \eta_A^+) \\
    & = \Pm( F(X_{(\cdot\, + \tau_{A,0}^-) \wedge \tau_{B_0}^+}) >
    \alpha
    \mid X_0 \sim \eta_A^+) \\
    & = \mathbb{Q}( F(Y) > \alpha \mid Y_0 \sim \eta_A^-) =
    \mathcal{Q}( F(Y) > \alpha).
  \end{align*}
  Therefore, 
  \begin{equation*}
    \frac{1}{N} \sum_{k = 0}^{N-1} F(Y^k)  =  
    \frac{1}{N} \sum_{k = 0}^{N-1} F(X^{A,k}_{(\cdot + \tau_{A,k}^-) \wedge \tau_{B,k}^+})  
    = \frac{1}{N} \sum_{k = 0}^{N-1} \Phi(X^{A,k}_{\cdot}).  
  \end{equation*}
  By \eqref{Philim} and Theorem~\ref{theo:XtauAmin}, we now conclude
  that the limit
  \begin{align*}
    \lim_{N \to \infty} \frac{1}{N} \sum_{k = 0}^{N-1} F(Y^k) = \expE[
    \Phi(Z_{\cdot \wedge \tau_{B}}) \mid Z_0 \sim \eta_A^+] =
    \wh{\expE}[ F(Y) \mid Y_0 \sim \eta_A^-]
  \end{align*}
  holds $\Pm$-almost surely. This completes the proof of Theorem
  \ref{theo:empirical}.
\end{proof}

\section{Reaction rate, density and current of transition
  paths} \label{sec:appltpt}

\subsection{Reaction rate}

\begin{proof}[Proof of Proposition \ref{prop:reactrate}]
  Denote $\tau_B$ the first hitting time of $X_t$ to $\wb{B}$.
  Consider the mean first hitting time
  \begin{equation*}
    u_B(x) = \mathbb{E}\left[\tau_B \mid X_0 = x\right], 
  \end{equation*}
  which satisfies the equation 
  \begin{equation}\label{eq:defuB}
    \begin{cases}
      L u_B(x) = - 1, & x \in \Theta \\
      u_B(x) = 0, & x \in \partial B.
    \end{cases}
  \end{equation}

  By definition of $\eta_A^+$, we have
  \begin{equation}\label{eq:aveub}
    \begin{aligned}
      \int_{\partial A} \eta_A^+(\rd x) u_B(x) & = \frac{1}{\nu}
      \int_{\partial A} \rho(x) u_B(x) \wh{n}(x) \cdot a(x) \nabla
      \wt{q}(x) \ud \sigma_A(x). 
    \end{aligned}
  \end{equation}  
  
  Observe that 
  \begin{equation*}
    \begin{aligned}
      \int_{\RR^d} \rho(x) \wt{q}(x) \ud x &
      = \int_{B^c} \rho(x) \wt{q}(x) \ud x \\
      & \stackrel{\eqref{eq:defuB}}{=} - \int_{B^c} \rho(x) \wt{q}(x)
      (L u_B)(x) \ud x \\
      & = - \int_A \rho(x) (Lu_B)(x) \ud x - \int_{\Theta}
      \rho(x) \wt{q}(x) (L u_B)(x) \ud x.
    \end{aligned}
  \end{equation*}
  Using \eqref{eq:green} with $D = A$, $\phi(x) = 1$ and $\psi(x) =
  u_B$, we obtain
  \begin{equation*}
    \int_A \rho (L u_B) \ud x = - \int_{\partial A} \rho
    b \cdot \wh{n} u_B \ud \sigma_A(x) - \int_{\partial A} \rho \wh{n} \cdot a 
    \nabla u_B \ud \sigma_A(x)  + \int_{\partial A} u_B \wh{n} \cdot 
    \divop(a \rho)     \ud \sigma_A(x),
  \end{equation*}
  where $\wh{n}$ is the interior normal vector at $\partial A$. Apply
  \eqref{eq:green} again with $D = \Theta$, $\phi = \wt{q}$ and $\psi
  = u_B$, 
  \begin{equation*}
    \int_{\Theta} \rho \wt{q} (L u_B) \ud x = \int_{\partial A} \rho b \cdot 
    \wh{n} u_B \ud \sigma_A(x) + \int_{\partial A} \rho \wh{n} \cdot a 
    \nabla u_B \ud \sigma_A(x) - \int_{\partial A} u_B \wh{n} \cdot 
    \divop(a \rho \wt{q}) \ud \sigma_A(x). 
  \end{equation*}
  Combining the two with \eqref{eq:aveub}, we get 
  \begin{equation*}
    \int_{\partial A} \eta_A^+(\rd x) u_B(x) = \frac{1}{\nu} 
    \int_{\partial A} \rho u_B \wh{n} \cdot 
    a \nabla \wt{q} \ud \sigma_A(x) = \frac{1}{\nu} 
    \int_{\RR^d} \rho \wt{q} \ud x. 
  \end{equation*}
  Similarly, defining $u_A(x)$ to be the mean first hitting time of $X_t$ to
  $\wb{A}$ starting at $x$, we have
  \begin{equation*}
    \int_{\partial B} \eta_B^+(\rd x) u_A(x) = \frac{1}{\nu} 
    \int_{\RR^d} \rho (1 - \wt{q}) \ud x.
  \end{equation*}
  Add the integrals together to obtain
  \begin{equation*}
    \int_{\partial A} \eta_A^+(\rd x) u_B(x) 
    + \int_{\partial B} \eta_B^+(\rd x) u_A(x) 
    = \frac{1}{\nu}.
  \end{equation*}
  
  \medskip

  On the other hand, observe that 
  \begin{equation*}
    \begin{aligned}
      \frac{1}{\nu_R} &= \lim_{N_T \to \infty} \frac{T}{N_T} \\
      &= \lim_{N \to \infty} \frac{1}{N} \sum_{n=0}^{N-1} ( \tau_{A,
        n+1}^+ - \tau_{A, n}^+) \\
      &= \lim_{N \to \infty} \frac{1}{N} \sum_{n=0}^{N-1} ( \tau_{B,
        n}^+ - \tau_{A, n}^+) + \lim_{N\to\infty} \frac{1}{N}
      \sum_{n=0}^{N-1} ( \tau_{A, n+1}^+ - \tau_{B, n}^+).
    \end{aligned}
  \end{equation*}
  As $N \to \infty$, we have 
  \begin{equation*}
    T_{AB} = \lim_{N \to \infty} \frac{1}{N} \sum_{n=0}^{N-1} ( \tau_{B,
      n}^+ - \tau_{A, n}^+) = \mathbb{E} [ \tau_B \mid 
    X_0 \sim \eta_A^+ ] = \int_{\partial A} \eta_A^+(\rd x) u_B(x), 
  \end{equation*}
  and similarly 
  \begin{equation*}
    T_{BA} = \lim_{N\to\infty} \frac{1}{N}
    \sum_{n=0}^{N-1} ( \tau_{A, n+1}^+ - \tau_{B, n}^+) = \int_{\partial B}
    \eta_B^+(\rd x) u_A(x). 
  \end{equation*}
  Therefore 
  \begin{equation*}
    \frac{1}{\nu_R} = \int_{\partial A} \eta_A^+(\rd x) u_B(x) + \int_{\partial B}
    \eta_B^+(\rd x) u_A(x) = \frac{1}{\nu}, 
  \end{equation*}
  or equivalently $\nu = \nu_R$.

  From Theorem \ref{theo:empirical} it follows immediately that 
  \[
  C_{AB} = \int_{\partial A} \eta_A^-(\rd x) v_B(x).
  \]
  Indeed, the functional $F: Y \to \tau^Y_B$ is in
  $L^1(\mathcal{X},\mathcal{B},\mathcal{Q}_{\eta_A^-})$ by Proposition
  \ref{prop:Yexitbound}. The function $v_B(x) = \wh{\expE}[ \tau_B^Y
  \mid Y_0 = x]$ satisfies
  \[
  L^q v_B = -1, \quad x \in \Theta
  \]
  with $v(x) = 0$ for $x \in \partial B$. Hence, the function $w(x) =
  q(x) v_B(x)$ satisfies $L w = -q$ for $x \in \Theta$ with boundary
  condition $w(x) = 0$ for $x \in \partial \Theta$. Moreover, for $x_0
  \in \partial A$, we have
  \[
  v_B(x_0) = \lim_{x \to x_0} \frac{w(x)}{q(x)} = \frac{\wh{n}(x_0)
    \cdot a(x_0) \nabla w(x_0)}{ \wh{n}(x_0) \cdot a(x_0) \nabla
    q(x_0)}.
  \]
  Therefore,
  \[
  \int_{\partial A} \eta_A^-(\rd x) v_B(x) = -
  \frac{1}{\nu}\int_{\partial A} \rho(x) \wh{n}(x) \cdot a(x) \nabla
  w(x) \ud \sigma_A(x).
  \]
  Now applying \eqref{eq:green} with $D = \Theta$, $\phi = \tilde q$
  and $\psi = w$, we have
  \[
  - \frac{1}{\nu}\int_{\partial A} \rho(x) \wh{n}(x) \cdot a(x) \nabla
  w(x) \ud \sigma_A(x) = \frac{1}{\nu} \int_{\Theta} \rho(x) \tilde
  q(x)q(x) \ud x.
  \]

  It remains to show that 
  \begin{equation*}
    \nu = \int_{\RR^d} \rho \nabla q \cdot a \nabla q \ud x. 
  \end{equation*}
  Using integration by parts, we have 
  \begin{equation*}
    \begin{aligned}
      \int_{\RR^d} \rho \nabla q \cdot a \nabla q  \ud x & =
      \int_{\Theta} \rho \nabla \bigl(q -
      \frac{1}{2}\bigr) \cdot a \nabla q  \ud x \\
      & = - \int_{\Theta} \nabla\cdot (\rho a \nabla q) \bigl(q -
      \frac{1}{2}\bigr) \ud x + \int_{\partial A} \rho \bigl(q -
      \frac{1}{2}\bigr)  \wh{n} \cdot a \nabla q \ud \sigma_A(x) \\
      & \hspace{13em} + \int_{\partial B} \rho \bigl(q -
      \frac{1}{2}\bigr)  \wh{n} \cdot a \nabla q \ud \sigma_B(x).
    \end{aligned}
  \end{equation*}
  The first term on the right hand side vanishes as 
  \begin{equation*}
    \begin{aligned}
      \int_{\Theta} \nabla\cdot (\rho a \nabla q) \bigl(q -
      \frac{1}{2}\bigr) \ud x & = \int_{\Theta} \Bigl(\rho \tr a
      \nabla^2 q + \rho b \cdot \nabla q\Bigr) \bigl(q -
      \frac{1}{2}\bigr) \ud x  \\
      & \quad + \frac{1}{2} \int_{\Theta} \Bigl( \divop(\rho a) \cdot
      \nabla - \rho b \nabla \Bigr) (q^2 - q) \ud x \\
      & = \int_{\Theta} \rho (L q) \bigl(q - \frac{1}{2}\bigr) \ud x -
      \frac{1}{2} \int_{\Theta} (L^{\ast} \rho)(q^2 - q) = 0,
    \end{aligned}
  \end{equation*}
  where we have used that $q^2 - q = 0$ on $\partial A \cup \partial B$.
  The conclusion then follows from Lemma~\ref{lem:sameMass}, $q = 0$ on
  $\partial A$, and $q = 1$ on $\partial B$.
\end{proof}

\subsection{Density of transition paths}

We define the Green's function $G_{\Theta}$ of the operator $L$ in
$\Theta$ with Dirichlet boundary condition on $\partial \Theta$:
\begin{equation} \label{GreenThetaDef}
  \begin{cases}
    L G_{\Theta}(x, y) = - \delta_y(x), & x \in \Theta, \\
    G_{\Theta}(x, y) = 0, & x \in \partial \Theta.
  \end{cases}
\end{equation}
The existence of the Green's function is guaranteed by the ergodicity
of $X_t$ in $\Rm^d$, which implies that $X_t$ is transient in $\Theta$
(see e.g.~\cite{Pinsky:95}*{Section 4.2}).

\begin{lem}\label{lem:GreenFcn}
  Let $G_{\Theta}$ be the Green's function of $L$ in $\Theta$ with
  Dirichlet boundary condition on $\partial \Theta$.  We have
  \begin{equation}\label{eq:Ginterior}
    G_{\Theta}^{q}(x, y) \equiv \int_0^{\infty} Q_R(t, x, y) \ud t 
    = \frac{q(y) G_{\Theta}(x, y) }{q(x)}.
  \end{equation}
  In particular, for $x \in \partial A$, $y \in \Theta$
  \begin{equation}\label{eq:Gboundary}
    G_{\Theta}^q(x, y) = \frac{ q(y) \wh{n}(x) \cdot a(x) 
      \nabla_x G_{\Theta}(x, y)}
    { \wh{n}(x) \cdot a(x) \nabla q(x)}.
  \end{equation}
\end{lem}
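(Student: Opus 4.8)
The plan is to derive the interior identity \eqref{eq:Ginterior} from the Doob $h$-transform relating the Dirichlet heat kernels of $L$ and $L^q$ in $\Theta$, and then to read off the boundary identity \eqref{eq:Gboundary} as a $0/0$ limit as $x \to \partial A$. Let $p_\Theta(t,x,y)$ denote the transition density of $X_t$ killed on $\partial\Theta = \partial A \cup \partial B$, so that $G_\Theta(x,y) = \int_0^\infty p_\Theta(t,x,y)\,\ud t$ is the Green's function of \eqref{GreenThetaDef}; transience of $X_t$ in $\Theta$ guarantees this integral is finite.

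First I would establish the density identity $Q_R(t,x,y) = \frac{q(y)}{q(x)}\,p_\Theta(t,x,y)$ for $x,y \in \Theta$. Since $q$ is $L$-harmonic and strictly positive in $\Theta$, the Doob $h$-transform with $h = q$ gives, on $\wh{\mathcal{F}}_t$ before killing, the local density $q(X_t)/q(x)$ on the event $\{t < \tau_A \wedge \tau_B\}$. Thus for bounded $f$ supported in $\Theta$,
\[
\wh{\expE}\bigl[f(Y_t)\,\mathbb{I}_{t < t_B}\bigr] = \frac{1}{q(x)}\,\expE\Bigl[q(X_t)\,f(X_t)\,\mathbb{I}_{t < \tau_A \wedge \tau_B} \mid X_0 = x\Bigr] = \frac{1}{q(x)}\int_\Theta q(y)\,f(y)\,p_\Theta(t,x,y)\,\ud y,
\]
where we used that $Y_t \notin \wb A$ for $t > 0$ (Theorem~\ref{theo:uniqueYAB}), so that killing of $Y$ occurs only on $\partial B$ and coincides with killing of $X$ on $\partial\Theta$. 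This is the claimed density identity. Equivalently, one can verify directly that $\frac{q(y)}{q(x)}G_\Theta(x,y)$ solves $(L^q)_x G_\Theta^q(\cdot,y) = -\delta_y$ with zero boundary values, using $L^q f = q^{-1} L(q f)$ and uniqueness of the Green's function of the transient process $Y$. Integrating the density identity over $t \in [0,\infty)$ and using $G_\Theta(x,y) = \int_0^\infty p_\Theta(t,x,y)\,\ud t$ then yields \eqref{eq:Ginterior}; finiteness of $G_\Theta^q$ is ensured by the exponential decay in Proposition~\ref{prop:Yexitbound}.

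For \eqref{eq:Gboundary}, fix $y \in \Theta$ and $x_0 \in \partial A$, and extend $G_\Theta^q(\cdot,y)$ to $\partial A$ by the limit of its interior values. Both $q$ and $G_\Theta(\cdot,y)$ solve $L u = 0$ in a boundary neighborhood $\Theta \cap B_r(x_0)$ (with $r$ small enough that $y \notin \overline{B_r(x_0)}$), vanish on $\partial A$, and are $C^1$ up to $\partial A$ by elliptic regularity; by the Hopf lemma their conormal derivatives $\wh{n}\cdot a\nabla q$ and $\wh{n}\cdot a\nabla_x G_\Theta(\cdot,y)$ are nonzero on $\partial A$. Because both functions vanish along $\partial A$, their gradients are parallel to $\wh{n}$ there, so a first-order Taylor expansion along the inward normal gives
\[
\lim_{\substack{x \in \Theta \\ x \to x_0}} \frac{G_\Theta(x,y)}{q(x)} = \frac{\wh{n}(x_0)\cdot\nabla_x G_\Theta(x_0,y)}{\wh{n}(x_0)\cdot\nabla q(x_0)} = \frac{\wh{n}(x_0)\cdot a(x_0)\nabla_x G_\Theta(x_0,y)}{\wh{n}(x_0)\cdot a(x_0)\nabla q(x_0)},
\]
the last equality holding because $\nabla q = (\wh{n}\cdot\nabla q)\,\wh{n}$ and $\nabla_x G_\Theta = (\wh{n}\cdot\nabla_x G_\Theta)\,\wh{n}$ on $\partial A$, so that the common scalar $\wh{n}\cdot a\,\wh{n}$ cancels. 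Multiplying by $q(y)$ gives \eqref{eq:Gboundary}.

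The main obstacle is the boundary limit: one must know that $G_\Theta(\cdot,y)/q(\cdot)$ extends continuously up to $\partial A$ and is computed by the ratio of conormal derivatives, independently of the direction of approach. This is precisely the type of control supplied by the boundary Harnack principle for the degenerate quotient (already invoked in the proof of Lemma~\ref{lem:etaminusplus}); the smoothness of $\partial A$, of $a$, and of the boundary data, together with $y$ staying in the interior, make $q$ and $G_\Theta(\cdot,y)$ comparable near $\partial A$, so the quotient is Hölder up to the boundary and the stated limit is legitimate. Everything else is routine once the $h$-transform density identity is in hand.
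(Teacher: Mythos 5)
Your treatment of the interior identity \eqref{eq:Ginterior} matches the paper's (the Doob $h$-transform of the killed heat kernel; the paper simply cites Proposition 4.2.2 of \cite{Pinsky:95}), and your computation of the boundary limit of the ratio $G_\Theta(\cdot,y)/q(\cdot)$ via elliptic regularity, the Hopf lemma, and boundary Harnack is also essentially what the paper does. However, there is a genuine gap in your proof of \eqref{eq:Gboundary}: you ``extend $G_\Theta^q(\cdot,y)$ to $\partial A$ by the limit of its interior values,'' which silently changes the statement being proved. In the lemma, $G_\Theta^q(x_0,y)$ for $x_0 \in \partial A$ is defined intrinsically as $\int_0^\infty Q_R(t,x_0,y)\,\ud t$, where $Q_R(t,x_0,\cdot)$ is the density of the transition path process $Y_t$ started \emph{exactly at} the boundary point $x_0$ --- an object that makes sense only because of the strong-solution result, Theorem~\ref{theo:uniqueYAB}. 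The content of \eqref{eq:Gboundary} is precisely that this intrinsic, boundary-started Green's function coincides with the continuous extension of the interior formula; showing that the extension has the stated conormal-derivative form does not address this.

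What is missing is the identification
\[
\int_0^\infty Q_R(t,x_0,y)\,\ud t \;=\; \lim_{\substack{x\in\Theta\\ x \to x_0}} \frac{q(y)\,G_\Theta(x,y)}{q(x)},
\]
that is, continuity of the law of $Y$ (and of its time-integrated density) in the starting point up to $\partial A$. The paper closes this by a test-function argument: for smooth, compactly supported $\varphi \geq 0$, Proposition~\ref{prop:Y0contin} gives $\wh{\expE}[\varphi(Y_t)\mid Y_0 = x] \to \wh{\expE}[\varphi(Y_t)\mid Y_0 = x_0]$ as $x \to x_0$ (this rests on the convergence machinery behind Theorem~\ref{theo:uniqueYAB}), Proposition~\ref{prop:Yexitbound} gives the uniform bound $\wh{\expE}[\varphi(Y_t)\mid Y_0 = x] \leq \norm{\varphi}_\infty k_1 e^{-k_2 t}$ so that dominated convergence applies to the time integral, and arbitrariness of $\varphi$ then yields the identity. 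You invoke Proposition~\ref{prop:Yexitbound} only for finiteness of the Green's function and never use Proposition~\ref{prop:Y0contin}; without this step (or an equivalent argument), your proof establishes a formula for the continuous extension of $G_\Theta^q(\cdot,y)$ rather than for $G_\Theta^q(x_0,y)$ as defined in the lemma.
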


\begin{proof}
  Fix $y \in \Theta$. For $x \in \Theta$, \eqref{eq:Ginterior} follows
  from \cite{Pinsky:95}*{Proposition 4.2.2}. Specifically, the
  function $G_{\Theta}^{q}(x, y)$ defined by
  \begin{equation*}
    G_{\Theta}^{q}(x, y) = \int_0^{\infty} Q_R(t, x, y) \ud t 
  \end{equation*}
  is related to the Green's function \eqref{GreenThetaDef} by the
  formula 
  \begin{equation*}
    G_{\Theta}^{q}(x, y) = \frac{q(y) G_{\Theta}(x, y) }{q(x)}, 
    \quad x,y \in \Theta.
  \end{equation*}
  Because of the regularity of the coefficients $a(x)$ and $b(x)$,
  Schauder-type interior and boundary estimates imply that $G(\cdot,y)
  \in C^{2,\alpha}(\wb{\Theta} \setminus \{y\})$. Since $G(x,y) = q(x)
  = 0$ for $x \in \partial A$, the Hopf Lemma implies that for all $x
  \in \partial A$, $\nabla_x G(x,y)$ is a nonzero multiple of
  $\wh{n}(x)$. That is, for all $x \in \partial A$, $\nabla_x G(x,y) =
  r(x)\wh{n}(x)$ for some continuous $r(x) < 0$. The same is true for
  $q$. Therefore, $G_{\Theta}^{q}(x, y)$ is continuous in $x$ up to
  the boundary $\partial \Theta$ and for $x_0 \in \partial A$,
  \[
  \lim_{x \to x_0,\, x \in \Theta} G_{\Theta}^{q}(x, y) = \frac{ q(y)
    \wh{n}(x_0) \cdot a(x_0) \nabla_x G_{\Theta}(x_0, y)} {
    \wh{n}(x_0) \cdot a(x_0) \nabla q(x_0)}.
  \]

  It remains to show that for $x_0 \in \partial A$,
  \begin{equation}\label{Gqrep1}
    \frac{ q(y) \wh{n}(x_0) \cdot a(x_0) 
      \nabla_x G_{\Theta}(x_0, y)}
    { \wh{n}(x_0) \cdot a(x_0) \nabla q(x_0)} 
    = \int_0^{\infty} Q_R(t, x_0, y) \ud t. 
  \end{equation}
  Let $\varphi \geq 0$ be smooth and compactly supported in
  $\Theta$. By Proposition~\ref{prop:Y0contin}, we have
  \[
  \lim_{x \to x_0} \wh{\expE}[ \varphi(Y_t)\mid Y_0 = x] = \wh{\expE}[
  \varphi(Y_t)\mid Y_0 = x_0].
  \]
  Moreover,
  \[
  \wh{\expE}[ \varphi(Y_t)\mid Y_0 = x] \leq \norm{\varphi}_\infty
  \mathbb{Q}( Y_t \in \Theta\mid Y_0 = x).
  \]
  By Proposition \ref{prop:Yexitbound}, for any $R > 0$, there are
  constants $k_1, k_2 > 0$ such that $\mathbb{Q}( Y_t \in \Theta\mid
  Y_0 = x) \leq k_1e^{-k_2 t}$ for all $x \in \theta$, $|x| < R$, $t
  \geq 0$. Therefore, we have $\wh{\expE}[ \varphi(Y_t)\mid Y_0 = x]
  \leq \norm{\varphi}_\infty k_1 e^{-k_2 t}$ so the dominated
  convergence theorem implies that
  \begin{equation}\label{Gconv1}
    \begin{aligned}
      \lim_{x \to x_0} \int_{\Theta} G_{\Theta}^{q}(x, y) \varphi(y)
      \ud y & = \lim_{x \to x_0} \int_0^\infty \wh{\expE}[
      \varphi(Y_t)\mid Y_0 = x] \ud t  \\
      & = \int_0^\infty \wh{\expE}[ \varphi(Y_t)\mid Y_0 = x_0] \ud t  \\
      & = \int_0^\infty \left( \int_{\Theta} Q(t,x_0,y) \varphi(y) \ud
        y \right) \ud t.
    \end{aligned}
  \end{equation}
  On the other hand, we also have
  \begin{equation}\label{Gconv2}
    \lim_{x \to x_0} \int_{\Theta} G_{\Theta}^{q}(x, y) \varphi(y) \ud y = 
    \int_{\Theta} \frac{ q(y) \wh{n}(x_0) \cdot a(x_0) 
      \nabla_x G_{\Theta}(x_0, y)}
    { \wh{n}(x_0) \cdot a(x_0) \nabla q(x_0)} \varphi(y) \ud y. 
  \end{equation}
  Therefore, by combining \eqref{Gconv1} and \eqref{Gconv2} we
  conclude
  \begin{align*}
    \int_{\Theta} \frac{ q(y) \wh{n}(x_0) \cdot a(x_0) \nabla_x
      G_{\Theta}(x_0, y)} { \wh{n}(x_0) \cdot a(x_0) \nabla q(x_0)}
    \varphi(y) \ud y
    & = \int_0^\infty \int_{\Theta} Q(t,x_0,y) \varphi(y) \ud y \ud t \\
    & = \int_{\Theta} \left( \int_0^\infty Q(t,x_0,y) \ud t \right)
    \varphi(y) \ud y.
  \end{align*}
  Since $\varphi$ is arbitrary, this implies \eqref{Gqrep1}.
\end{proof}

\begin{proof}[Proof of Proposition \ref{prop:rhoR}]
  Using Lemma~\ref{lem:GreenFcn} and (\ref{rhoRPR}), 
  \begin{equation}\label{eq:rhoRgreen}
    \rho_R(z) = \nu_R \int_{\partial A} \eta_A^-(\rd x) G^q_{\Theta}(x, z). 
  \end{equation}
  Recall the explicit formula of $\eta_A^-$ in terms of $q$
  \eqref{eq:etaA-}, we obtain for $z \in \Theta$
  \begin{equation*}
    \begin{aligned}
      \rho_R(z) & = -\int_{\partial A} \rho(x) \frac{ q(y) \wh{n}(x)
        \cdot a \nabla_x G_{\Theta}(x, z)} { \wh{n}(x) \cdot a \nabla
        q(x)}
      \wh{n}(x) \cdot a \nabla q(x) \ud \sigma_A(x) \\
      & = -q(y) \int_{\partial A} \rho(x) \wh{n}(x) \cdot a \nabla_x
      G_{\Theta}(x, z) \ud \sigma_A(x).
    \end{aligned}
  \end{equation*}
  Apply \eqref{eq:green} by taking $\psi(x) = G_{\Theta}(x, y)$ and
  $\phi(x) = \wt{q}(x)$, we conclude that 
  \begin{equation*}
    \begin{aligned}
      \rho_R(y) & = - q(y) \int_{\partial \Theta} \rho(x)
      \phi(x) \wh{n}(x) \cdot a \nabla \psi(x) \ud \sigma_{\Theta}(x) \\
      & = - q(y) \int_{\Theta} \rho(x) \phi(x) L \psi(x) \\
      & = \rho(y) q(y) \wt{q}(y). 
    \end{aligned}
  \end{equation*}
  Here to get the second equality, we have used that $\wt{L}\wt{q} =
  0$ in $\Theta$ and $\psi(x) = 0$ on $\partial \Theta$.
\end{proof}

\subsection{Current of transition paths}

\begin{proof}[Proof of Proposition~\ref{prop:Jboundary}]
  It follows from a direct calculation from the definition of $J_R$ as
  \eqref{eq:JR}, noticing that $q = 0, \wt{q} = 1$ on $\partial A$,
  and $q = 1, \wt{q} = 0$ on $\partial B$.
\end{proof}

\begin{proof}[Proof of Corollary~\ref{cor:SJR}]
  By Proposition~\ref{prop:Jboundary}, we have
  \begin{equation*}
    \nu_R = - \int_{\partial A} \wh{n}(x) \cdot J_R(x) \ud \sigma_A(x).
  \end{equation*}
  Hence, it suffices to show that 
  \begin{equation*}
    \int_{\partial A} \wh{n}(x) \cdot J_R(x) \ud \sigma_A(x)
    + \int_{\partial S} \wh{n}(x) \cdot J_R(x) \ud \sigma_S(x) = 0,
  \end{equation*}
  which follows from the fact that $J_R$ is divergence free in
  $\Theta$ (see \eqref{eq:divJR}).
\end{proof}

\begin{proof}[Proof of Proposition~\ref{prop:JR}]

  Using Proposition~\ref{prop:Jboundary} for the left hand side of
  \eqref{eq:defJR}, we obtain
  \begin{equation*}
    \int_{\partial B} f(x) \eta_B^+(\rd x) - \int_{\partial A} f(x)
    \eta_A^-(\rd x) = \frac{1}{\nu_R} \int_{\partial B} f \wh{n} \cdot J_R \ud
    \sigma_B + \frac{1}{\nu_R} \int_{\partial A} f \wh{n} \cdot J_R \ud \sigma_A,
  \end{equation*}
  where $\wh{n}$ is the unit normal exterior to $\Theta$. Equation
  \eqref{eq:defJR} then follows from the divergence theorem.

  Now fix any $g \in C^1(\partial B)$, we extend $g$ to $\wb{\Theta}$
  using the flow \eqref{eq:Z}: for any $x \in \wb{\Theta}$, we define
  \begin{equation}\label{eq:defg}
    g(x) = g(Z_{t_B}^x), \quad \text{with } Z_0^x = x. 
  \end{equation}
  In particular, for $x \in \partial A$, we have $
  g(x) = g(\Phi_{J_R}(x))$, in other words, 
  \begin{equation}\label{eq:pullback}
    g \vert_{\partial A} = \Phi_{J_R}^{\ast}  (g \vert_{\partial B}).
  \end{equation}
  By the construction \eqref{eq:defg}, for any $x \in \Theta$, $J_R
  \cdot \nabla g = 0$. Combining with the first part of the
  Proposition and \eqref{eq:pullback}, we obtain
  \begin{equation*}
    \int_{\partial B} g(x) \eta_B^{+}(\rd x) = 
    \int_{\partial A} \Phi_{J_R}^{\ast}  g \, \eta_A^-(\rd x). 
  \end{equation*}
  Therefore, $\Phi_{J_R, \ast}(\eta_A^-) = \eta_B^+$.
\end{proof}


\bibliographystyle{amsxport} 
\bibliography{tpt}

\end{document}